\pgfplotsset{compat=1.11}
\newcommand\numberthis{\addtocounter{equation}{1}\tag{\theequation}} % Nummerierung
\newcounter{statement}
\newtheorem{Def}[statement]{Definition}
\newtheorem{thm}[statement]{Theorem}
\newtheorem{lemma}[statement]{Lemma}
\newtheorem{cor}[statement]{Corollary}
\newtheorem{prop}[statement]{Proposition}
\theoremstyle{remark}
\newcommand{\R}{\mathbb R} 				%real numbers
\newcommand{\ball}[2]{{B_{#2}\left(#1\right)}}				%ball of radius #2 around #1
\newcommand{\intd}{\, \mathrm{d}} 				%schönes d für Integrale
\newcommand{\warr}{\rightharpoonup}				%Pfeil für schwache Konvergenz
\newcommand{\Leb}{\mathcal L}				%Lebesgue measure
\newcommand{\Sph}{{\mathbb S}}				%Sphere in 2D
\newcommand{\Div}{\operatorname{div}}			%divergence
\newcommand{\F}{\mathcal F}								%Fourier Transform
\newcommand{\supn}{^{(n)}}
\newcommand{\supdelta}{^{(\delta)}}
\newcommand{\RE}{\operatorname{Re}}
\newcommand{\supp}{\operatorname{supp}}
\newcommand{\dist}{\operatorname{dist}}
\newcommand{\tr}{\operatorname{tr}}
\newcommand{\eps}{\varepsilon}
\newcommand{\interior}[1]{{\kern0pt#1}^{\mathrm{o}}	%Definition of interior set
}
\newcommand{\translatepoint}[1]%			For translation of coordinate system in 3D tikzpictures
{   \coordinate (middlepoint) at (#1);
}
\def\Xint#1{\mathchoice
{\XXint\displaystyle\textstyle{#1}}%
{\XXint\textstyle\scriptstyle{#1}}%
{\XXint\scriptstyle\scriptscriptstyle{#1}}%
{\XXint\scriptscriptstyle\scriptscriptstyle{#1}}%
\!\int}
\def\XXint#1#2#3{{\setbox0=\hbox{$#1{#2#3}{\int}$ }
\vcenter{\hbox{$#2#3$ }}\kern-.58\wd0}}
\def\dashint{\Xint-}
\title{Quantitative aspects of the rigidity of branching microstructures in shape memory alloys via H-measures}
\author{Thilo M. Simon\footnote{Research carried out at: Max-Planck-Institut f\"ur Mathematik in den Naturwissenschaften, Inselstra{\ss}e 22, 04103 Leipzig, Germany. Now at:  New Jersey Institute of Technology, University Heights Newark, New Jersey 07102, USA.  Please use {tmsimon@njit.edu} for correspondence. ORCID iD: 0000-0003-4323-7692.}}
\begin{document}
\maketitle
\begin{abstract}
	We quantify the rigidity of branching microstructures in shape memory alloys undergoing cubic-to-tetragonal transformations in the geometrically linearized theory by making use of Tartar's H-measures.
	The main result is a $B^{2/3}_{1,\infty}$-estimate for the characteristic functions of twins, which heuristically suggests that the larger-scale interfaces can cluster on a set of Hausdorff-dimension $3-\frac{2}{3}$.
	We provide evidence indicating that the dimension is optimal.
	Furthermore, we get an essentially local lower bound for the blow-up behavior of the limiting energy density close to a habit plane.
    \medskip

    \noindent \textbf{Keywords:} H-measures, shape memory alloys, cubic-to-tetragonal transformation, linearized elasticity

    \medskip

    \noindent \textbf{Mathematical Subject Classification:} 74N15, 35A15, 74G55
\end{abstract}

\section{Introduction}
\subsection{Literature}
The use of tools to measure the failure of strong compactness in the analysis of microstructure has a long tradition, with Young measures being the most prevalent choice.
An overview of their application in this context can be found in notes by M\"uller \cite{muller_microstructure}.
However, while Yound measures are capable of detecting the oscillations of a fine twin, they are insensitive to their geometry.
Therefore, they are the wrong tool to use as in the present work we want to capture the rigidity due to the microscopic geometry of branching microstructures after having described their macroscopic geometry in the paper \cite{simon2017rigidity}.
Instead, we make use of Tartar's H-measures \cite{Tartar90}, independently defined by Gerard \cite{gerard1988microlocal}, as they are well-suited to detect the essentially one-dimensional oscillations of small-scale twinning.
What is more, their transport property \cite[Section 3]{Tartar90}, which describes how a linear PDE for the sequence restricts the transport of oscillations, make them a natural tool to analyze rigidity properties.
%A nice feature of this approach is the tractability and conceptual clarity of the computations, which allows us to at least partially include austenite in this chapter in contrast to the preceding ones.

% Further examples can be found in works of Chipot and Kinderlehrer \cite{chipot1988equilibrium}, Kinderlehrer
% 
% CHIPOT und KINDERLEHRER, KINDERLEHRER UND ANDERE
% 
% Ball and James \cite{BallJames92Youngmeasure}.

Instead of a non-linear approach in the spirit of Ball and James \cite{BallJames87minimizers} we choose the geometrically linearized theory for the basis of our analysis.
It was first used by Khatchaturyan, Roitburd and Shatalov \cite{khachaturyan1967some, khachaturyan1983theory, khachaturyan1969theory, roitburd1969domain, roitburd1978martensitic} to model materials undergoing martensitic phase transformations. 
Somewhat more recently it has been used to provide rigorous rigidity results and constructions of microstructures:

Dolzmann and M\"uller \cite{DM95} proved that twins are the only stress-free microstructures in cubic-to-tetragonal transformations.
Capella and Otto \cite{CO09,CO12} quantified their result by augmenting the elastic energy with an interface penalization.
A simplified scalar version of such a functional has previously been used in the well-known works by Kohn and M\"uller \cite{KM92,KM94branching} to argue that the interface energy leads to a branching of twins at a habit plane.
The microscopic structure of minimizers for their model was investigated by Conti \cite{Conti00branched}, establishing their asymptotic self-similarity.
An analysis of the large-scale structure of microstructures locally involving at most two martensite phases has been given by the author \cite{simon2017rigidity}.

Previous applications of H-measures in the theory of shape memory alloys have been given by Kohn \cite{Kohn1991relaxation}, who used H-measures to calculate the quasiconvex envelope of a two-well energy in the geometrically linear theory, and Smyshlyaev and Willis \cite{smyshlyaev1999relation} and Govindjee, Hall and Mielke \cite{GOVINDJEE2003I}, who analyzed the three-well and the n-well case, respectively, building on Kohn's work.
Additionally, H-measures have been used by Heinz and Mielke \cite{heinz2016existence} to study the existence of solutions to a rate-independent model for dynamics in a two-well phase transformation.

\subsubsection*{Outline}
In Subsection \ref{subsec:def_energy} we give the energy and its elementary properties.
The main results are collected in Subsection \ref{subsec:main_results}.
Section \ref{sec:statements} contains a discussion of the necessary intermediate statements, while the proofs are given in Section \ref{sec:proofs}.

\subsection{Definition of the energy}\label{subsec:def_energy}
In the following, we give a definition of the energy and repeat the properties of the energy of direct relevance to our problem in order to fix notation.
For a more thorough discussion of the model see Capella and Otto \cite{CO12}.

As in the companion paper \cite{simon2017rigidity}, we only consider sequences $(u_\eta, \chi_\eta)$ with \[\limsup_{\eta \to 0} E_\eta(u_\eta,\chi_\eta)<\infty,\]
where the energy is given by
\begin{align}
  E_\eta(u,\chi)&:= E_{elast}(u,\chi)+ E_{inter,\eta}(u,\chi),\numberthis \label{ENERGIE!}\\
  \intertext{for}
  E_{elast,\eta}(u,\chi)&:=\eta^{-\frac{2}{3}}\int_\Omega \left| e(u) -\sum_{i=1}^3\chi_i e_i\right|^2 \intd \Leb^3,\\
  E_{inter,\eta}(u,\chi)&:=\eta^{\frac{1}{3}} \sum_{i=1}^3|D \chi_i|(\Omega).
\end{align}
Here the set $\Omega\subset \R^3$ is a bounded Lipschitz domain, the displacement is a function $u:\Omega \to \mathbb{R}^3$ and the strain is denoted by $e(u) =\frac{1}{2}\left(Du + Du^T\right)$.
Additionally, the maps $\chi_i:\Omega \to \{1,1\}$ for $i=1,\ldots,3$ with $ \sum_{i=1}^3\chi_i = 1$ represent the partition into the phases, and the martensite strains are
\[
    e_0 := 0,
    e_1:=\begin{pmatrix}
             -2 & 0 & 0 \\
             0 & 1 & 0\\
             0 & 0 & 1\\
	  \end{pmatrix},
    e_2:=\begin{pmatrix}
             1 & 0 & 0 \\
             0 & -2 & 0\\
             0 & 0 & 1\\
	\end{pmatrix},         
    e_3:=\begin{pmatrix}
             1 & 0 & 0 \\
             0 & 1 & 0\\
             0 & 0 & -2\\
	\end{pmatrix}.\numberthis \label{strains}
\]
In contrast to the previous paper \cite{simon2017rigidity}, note that we are considering the greater generality of austenite being present.

The martensite strains are (symmetrically) rank-one connected via
\begin{align}\label{rank_one_connections}
	\begin{split}
		e_2 - e_1 & = 6 \, \nu_3^+ \odot \nu_3^- = 6 \cdot \frac{1}{2}\left( \nu_3^+ \otimes \nu_3^- + \nu_3^- \otimes \nu_3^+ \right),	\\
		e_3 - e_2 & = 6\, \nu_1^+ \odot \nu_1^-,\\
		e_1 - e_3 & = 6\, \nu_2^+ \odot \nu_2^-.
	\end{split}
\end{align}
Here, the normals are defined as
\begin{align}
  \label{def:normals}
 \begin{split}
	\nu_1^+:= \frac{1}{\sqrt 2}(011), \nu_1^-:=\frac{1}{\sqrt 2}(01\overline1),\\
	\nu_2^+:= \frac{1}{\sqrt 2}(101), \nu_2^-:=\frac{1}{\sqrt 2}(\overline101),\\
	\nu_3^+:= \frac{1}{\sqrt 2}(110), \nu_3^-:=\frac{1}{\sqrt 2}(1\overline10).
 \end{split}
\end{align}
Note that we employ crystallographic notation in defining $\overline1 := -1$.
We collected the normals in the three pairs 
\begin{align*}
	N_1:= \{\nu_1^+,\nu_1^-\},\\
	N_2:=\{\nu_2^+,\nu_2^-\},\\
	N_3:= \{\nu_3^+,\nu_3^-\}.
\end{align*}
and denote their union by $N:=N_1\cup N_2 \cup N_3$.

%\begin{remark}\label{rem:combinatorics}
%An essential combinatorial property is that for any $\nu_i \in N_i$, $\nu_{i+1} \in N_{i+1}$ with $i\in \{1,2,3\}$ there exists exactly one $\nu_{i-1} \in N_{i-1}$ such that $\{\nu_{i},\nu_{i+1},\nu_{i-1}\}$ is linearly dependent:
%Indeed, the linear relation is given by $\nu_j \cdot d = 0$ for a space diagonal
%\[d\in \mathcal{D}:= \{[111],[\overline111],[1\overline11],[11\overline1]\}\numberthis\label{intro:space_d}\]
%of the unit cube, see Figure \ref{fig:geometry_normals_b}.
%We will prove in Step 1 of the Proof of Proposition \ref{prop: six_corner} that they form $120\degree$ angles.
%Additionally, for every $\nu \in N$ there exist precisely two $d\in \mathcal{D}$ such that $\nu\cdot d = 0$ and for $\nu \in N_i$ and $\tilde \nu \in N_{i+1}$ there exists a single $d \in \mathcal{D}$ such that $\nu \cdot d = \tilde \nu \cdot d =0$.
%In contrast, for each $d\in \mathcal{D}$ we have $\nu_i^+\cdot d=0$ and $\nu_i^- \cdot d \neq 0$ or vice versa.
%\end{remark}

In order to localize our results we will at times think of $E_{\eta}$, $E_{elast,\eta}$ and $E_{inter,\eta}$ as finite Radon measures on $\Omega$, where we dropped the dependence on $u_\eta$ and $\chi_\eta$.
Furthermore, passing to a subsequence we assume the existence of finite Radon measures $E_{elast}$ and $E_{inter}$ on $\Omega$ such that $E_{elast,\eta} \overset{\ast}{\warr} E_{elast}$ and $E_{inter,\eta} \overset{\ast}{\warr} E_{inter}$ as measures.
For the Lebesgue-densities of the limiting energy we will use the abbreviations
\begin{align}\label{Lebesgue_densities_notation}
	\begin{split}
		E_{elast}^\Leb  & := \frac{DE_{elast}}{D\Leb^3}\\
		E_{inter}^\Leb & := \frac{DE_{inter}}{D\Leb^3}\\
		E_{elast}^\Leb(U) &  := \int_U E_{elast}^\Leb \intd \Leb^3,\\
		E_{inter}^\Leb(U) & := \int_U E_{inter}^\Leb \intd \Leb^3
	\end{split}
\end{align}
for $U \subset \Omega$.
Additionally, let $U_h := U + \ball{0}{h}$.

Finally, observe that the weak$^*$ limits $\theta_i$ of the functions $\chi_i$ relate to the limiting displacement via
\begin{equation}\label{relation_Du_theta}
  \partial_iu_i  = -3\theta_i - \theta_0 + 1
\end{equation}
for $i=1,2,3$.
This is a straightforward consequence of the computation
\begin{equation}\label{relation_Du_chi}
 \begin{split}
  \partial_iu_{i,\eta} & = \sum_{j=0}^3 \chi_{j,\eta}(e_j)_{ii} + o_{L^2}(\eta)= -2\chi_{i,\eta} + \sum_{j=1, j\neq i}^3 \chi_{j,\eta}  + o_{L^2}(\eta)\\
  & = -3\chi_{i,\eta} - \chi_{0,\eta}+ 1 + o_{L^2}(\eta),
 \end{split}
\end{equation}
where we used $\sum_{i=0}^3 \chi_{i,\eta} = 1$.
%Note that we will prove $\theta_0 \in \{0,1\}$ in Proposition \ref{prop:disjoint_supports}, so that equation \eqref{relation_Du_theta} boils down to
%\[\partial_i u_i = (-3\theta_i +1)\chi_{\{\theta_0 = 0\}}.\]

The martensite indices $1,2$ and $3$ will be used cyclically. 
Note that the austenite index $0$ is explicitly excluded from this convention.

\subsection{Main results}\label{subsec:main_results}
Our two main contributions state that, as long as the volume fractions of any mixture of martensites does not degenerate towards a pure phase, the characteristic functions of the twins in a finite energy sequence belong to the space $B^{2/3}_{1,\infty}$.
In view of Definition \ref{def:Besov}, this roughly says that they have two-thirds of a derivative in $L^1$, or rather, that the fractional derivative is a measure.
In particular, we do not get that interfaces between twins form a 2-rectifiable set.
Instead, the estimate corresponds to the set of interfaces having at most Hausdorff-dimension $3 - \frac{2}{3}$.

Furthermore, there is plenty of evidence that this dimension is sharp:
First, in Proposition \ref{prop:disjoint_supports} we prove using the rescaling properties of the functional that the set on which the Capella-Otto result \cite{CO12} cannot be applied after blow-up is of at most the same dimension.
Secondly, it is straightforward to construct second-order laminates with finite energy such that the large-scale interfaces cluster on sets of Hausdorff-dimensions $3-\frac{2}{3} - \eps$ for all $\eps >0$.
Also this is mostly a result of scaling: The energy between two large-scale interfaces can easily be seen to scale as $d^{2/3}$, where $d$ is the distance between the interfaces.

Theorem \ref{thm:Besov_for_non-checkerboards} deals with the case that in the limit there is at least some amount of twinning everywhere, i.e., that the volume fractions are bounded away from pure phases.
This takes care of most two-variant configurations, second-order laminates and triple intersections in the terminology of \cite[Definitions 2.4, 2.6, 2.8]{simon2017rigidity}.
However, it excludes the presence of austenite.

\begin{thm}\label{thm:Besov_for_non-checkerboards}
	There exist universal constants $c,C \geq 1$ with the following property:
	
	Let $(u,\theta)$ be the limit of a finite energy sequence of displacements and partitions.
% with H-measures $\mu_i\supdelta$.
	Furthermore, assume that $\theta_i < 1$ for $i=0,\ldots 3$ almost everywhere on $\Omega$ and let there exist $\eps \geq 0$ such that for all $i=1,2,3$ we have $18 \theta_i (1-\theta_i) \geq \eps$ on the set $\{0 < \theta_i < 1\}$.
	
	Then the characteristic function of the twin normal to $\nu \in N$ in the sense of the decomposition of Lemma \ref{lem:structmeas3D} and Corollary \ref{cor:diff_incl} satisfies $\chi_{[\nu]} \in B^{2/3}_{1,\infty}(\Omega)$ with the estimate
	\[\int_U |\partial_d^h \chi_{[\nu]}(x) |\intd x \leq C \eps^{-1}\left(E_{inter}^\Leb(U_{ch})\right)^\frac{2}{3}\left(E_{elast}^\Leb(U_{ch})\right)^\frac{1}{3} h^\frac{2}{3}\]
	for all $d\in \Sph^1$, open sets $U \subset \subset \Omega$ and $ h < \frac{1}{c}\dist(U,\partial \Omega)$.
	For definitions of $E_{inter}^\Leb$ and $E_{elast}^\Leb$ see equations \eqref{Lebesgue_densities_notation}.
\end{thm}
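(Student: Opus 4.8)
The plan is to turn the $B^{2/3}_{1,\infty}$-seminorm of $\chi_{[\nu]}$ into an estimate for the Lebesgue measure of the ``transition set'' of the twin region, to bound that measure by the limiting energy via a Kohn--Müller branching lower bound, and to assemble the global statement by a covering argument at scale $h$. \textbf{Step 1: reduction via the structure measure.} Since $\chi_{[\nu]}$ is a characteristic function,
\[
	\int_U|\partial_d^h\chi_{[\nu]}(x)|\intd x=\Leb^3\big(\{x\in U:\ \chi_{[\nu]}(x+hd)\neq\chi_{[\nu]}(x)\}\big),
\]
so it suffices to estimate the measure of this set. By Lemma \ref{lem:structmeas3D} and Corollary \ref{cor:diff_incl} we may write $\chi_{[\nu]}=\chara_{\{g_\nu>0\}}$, where $g_\nu$ is the Lebesgue density of the structure (H\nobreakdash-)measure carried by the twinning system $\nu$, and on $\{g_\nu>0\}=:\Omega_{[\nu]}$ this density is the relevant strain variance, i.e.\ $\theta_i(1-\theta_i)$ up to the fixed factor appearing in \eqref{rank_one_connections}. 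The non-degeneracy hypothesis then forces $g_\nu\ge\eps/18$ a.e.\ on $\Omega_{[\nu]}$, so $g_\nu$ takes a.e.\ values in $\{0\}\cup[\eps/18,\infty)$; hence, whenever $\chi_{[\nu]}(x+hd)\neq\chi_{[\nu]}(x)$, one of $g_\nu(x),g_\nu(x+hd)$ vanishes while the other is $\ge\eps/18$, which yields the pointwise inequality $|\partial_d^h\chi_{[\nu]}|\le\tfrac{18}{\eps}|\partial_d^h g_\nu|$. This is the mechanism producing the factor $\eps^{-1}$, and it reduces the theorem to an estimate with universal constants for the transition set $\partial\Omega_{[\nu]}$ (across which $g_\nu$ jumps by at least $\eps/18$).

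\textbf{Step 2: a local branching lower bound.} The core input is an essentially local, scale-$h$ lower bound for the Lebesgue densities $E_{inter}^\Leb,E_{elast}^\Leb$ of the limiting energy near a change of the twinning mode --- the local blow-up bound announced in Subsection \ref{subsec:main_results}, proved among the intermediate statements of Section \ref{sec:statements} in the spirit of Proposition \ref{prop:disjoint_supports}. In the form needed here: there are universal $c,C_0\ge1$ such that, if $\chi_{[\nu]}$ is genuinely non-constant on a ball $\ball{x}{h}\subset\subset\Omega$ --- say $\Omega_{[\nu]}\cap\ball{x}{h}$ and $\ball{x}{h}\setminus\Omega_{[\nu]}$ both have measure $\ge c^{-1}h^{3}$ --- then $\big(E_{inter}^\Leb(\ball{x}{C_0h})\big)^{2/3}\big(E_{elast}^\Leb(\ball{x}{C_0h})\big)^{1/3}\ge C_0^{-1}h^{7/3}$. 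The underlying heuristic is the classical $d^{2/3}$ branching scaling: the admissible rank-one mixtures of the strains \eqref{strains} are generically incompatible across a plane --- and, austenite being excluded, the non-degeneracy hypothesis keeps the volume fractions off the exceptional (measure-zero) set of compatible ones --- so a transition of the microstructure over a distance $h$ must be accommodated by refinement, and optimising the interface cost $\sim h^{3}/\ell$ against the elastic cost $\sim h^{2}\ell$ over the local twin spacing $\ell$ yields the weight $h^{7/3}$ per ``bad'' ball. Turning this into a genuine bound is where the transport property of H-measures \cite[Section 3]{Tartar90} enters: it confines the oscillations of a twin to the single direction $\nu$ and, together with the differential inclusion of Corollary \ref{cor:diff_incl} and the structural decomposition of \cite{simon2017rigidity}, rules out low-energy rearrangements of the microstructure at scale $h$.

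\textbf{Step 3: covering and Hölder.} With $c$ as above --- enlarged so that $h<\tfrac1c\dist(U,\partial\Omega)$ forces $U_{ch}\subset\subset\Omega$ --- pick a Besicovitch-type cover of $U$ by balls $\ball{x_j}{h}$ of bounded overlap. A ball that is not bad in the sense of Step 2 carries $\chi_{[\nu]}$ a.e.\ constant and so contributes nothing to the set in Step 1, while each bad ball contributes at most $Ch^{3}$; by Step 2 the number of bad balls is at most $C_0h^{-7/3}\sum_j\big(E_{inter}^\Leb(\ball{x_j}{C_0h})\big)^{2/3}\big(E_{elast}^\Leb(\ball{x_j}{C_0h})\big)^{1/3}$, and Hölder's inequality with exponents $\tfrac32,3$ together with the bounded overlap bounds this sum by $Ch^{-7/3}\big(E_{inter}^\Leb(U_{ch})\big)^{2/3}\big(E_{elast}^\Leb(U_{ch})\big)^{1/3}$. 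Multiplying by $Ch^{3}$ gives $\Leb^3(\{x\in U:\chi_{[\nu]}(x+hd)\ne\chi_{[\nu]}(x)\})\le C\big(E_{inter}^\Leb(U_{ch})\big)^{2/3}\big(E_{elast}^\Leb(U_{ch})\big)^{1/3}h^{2/3}$, and combining with the pointwise inequality of Step 1 yields the assertion with a universal $c$ and a universal $C\eps^{-1}$, the milder $\eps$-dependence coming from the strain variance in Step 2 being absorbed since $\eps$ is at most a universal constant. The argument is uniform in $d$, and the same scheme applies to $g_\nu$ in place of $\chi_{[\nu]}$ if one prefers to phrase the core estimate that way.

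\textbf{Main obstacle.} The real work is Step 2: one must descend from the branching lower bound at positive $\eta$ (of Kohn--Müller/Conti/Capella--Otto type) to an \emph{a priori} bound for the Lebesgue densities of the weak-$*$ limits $E_{inter},E_{elast}$, in particular discarding any singular parts and localising sharply in $x$; and, because the transition set of $\chi_{[\nu]}$ need not be rectifiable, one cannot slice along lines --- the energy has to be harvested at the fixed scale $h$ from the dilated ball $\ball{x_j}{C_0h}$. Quantifying the H-measure transport property sharply enough to forbid cheap reconfigurations of the twinning microstructure at scale $h$ is the crux, and this is precisely where the structural results inherited from \cite{simon2017rigidity} are indispensable.
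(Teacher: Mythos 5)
Your scheme (reduce to the measure of the transition set, posit a per-ball ``branching'' lower bound for the limiting energy densities, then cover $U$ at scale $h$ and sum with H\"older) is not the paper's argument, and as it stands it has a genuine gap: Step 2 is not a lemma you can borrow --- it is essentially the theorem itself in localized form, and nothing in the paper (or in Kohn--M\"uller/Conti/Capella--Otto) hands it to you. The heuristic $d^{2/3}$ scaling and the phrase ``the transport property confines oscillations'' do not produce the quantitative bound $\bigl(E_{inter}^\Leb(\ball{x}{C_0h})\bigr)^{2/3}\bigl(E_{elast}^\Leb(\ball{x}{C_0h})\bigr)^{1/3}\geq C_0^{-1}h^{7/3}$ for the weak-$*$ limit measures; indeed the paper's own blow-up statement of this type (Lemma \ref{energy_density_habit_plane}) is a \emph{consequence} of the machinery, derived from exactly the chain you would need to build: the regularized H-measures, the mass-approximation estimate $18\theta_i(1-\theta_i)-36\delta E_{inter}^\Leb\leq\tau_i\supdelta\leq18\theta_i(1-\theta_i)$ of Lemma \ref{lemma:mass_in_approx_converges}, the transport differential inequality of Proposition \ref{prop:transport_property} in directions $d$ with $d\cdot\nu=0$, the ODE Lemma \ref{lemma:differential_inequality}, and the optimization in $\delta$ (Lemma \ref{lemma:Besov_incomplete}). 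Your proposal acknowledges this under ``Main obstacle,'' but that is precisely the content that a proof must supply; without it nothing is proved. Note also that the paper's proof of the theorem contains a second essential ingredient your scheme never addresses: the transport property only controls directions perpendicular to $\nu$, and the direction $d=\nu$ is recovered by a separate combinatorial argument using $\sum_{\nu\in N}\chi_{[\nu]}\equiv1$, the two lattice directions $d_1,d_2\in\{[111],[\overline111],[1\overline11],[11\overline1]\}$ with $d_j\cdot\nu\neq0$, second difference quotients, and the elementary inequality $|\partial_d^h\chi|\leq|\partial_d^h\partial_d^h\chi|$ for characteristic functions.

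There is in addition a concrete error in Step 3: a ball that is ``not bad'' in your sense is not a ball on which $\chi_{[\nu]}$ is a.e.\ constant --- it is a ball on which the minority phase has measure below $c^{-1}h^3$, and such a ball can still contribute up to order $c^{-1}h^3$ to the transition set (a thin slab of the minority twin transverse to $d$ does exactly this) while receiving no energy lower bound from Step 2. Summing these contributions over the $\sim|U|h^{-3}$ balls of the cover gives a term of order $|U|$, which ruins the $h^{2/3}$ estimate. Repairing this requires either a lower bound proportional to the actual minority fraction in the ball (a strictly stronger statement than your Step 2, and one whose constant must moreover degenerate as $\theta_i(1-\theta_i)\to0$, so the $\eps$-dependence cannot simply be ``absorbed'' as claimed) or a multi-scale stopping-time argument; neither is sketched. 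In short, the covering/H\"older bookkeeping in Step 3 is fine only modulo a local energy estimate that is both unproven and stronger than what you state, so the proposal does not constitute a proof.
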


There is a corresponding version of this statement, Theorem \ref{thm:Besov_checkerboard}, for planar checkerboards, which do exhibit pure phases.

\begin{thm}\label{thm:Besov_checkerboard}
	There exist universal constants $c,C\geq 1$ with the following property:
	
	Let $(u,\theta)$ be the limit of a finite energy sequence of displacements and partitions.
	% with H-measures $\mu_i\supdelta$.
	
	Assume that $e(u)$ is a planar checkerboard in the sense of \cite[Definition 2.7]{simon2017rigidity}:
	There exists $i\in \{1,2,3\}$ such that
	\begin{align}\label{checkerboard_h_measure}
		\begin{split}
 				 \theta_i(x) = &  - a \chi_A (x\cdot \nu_{i+1})  -  b \chi_B(x\cdot \nu_{i-1})  + 1,\\
 				 \theta_{i+1}(x) = &  \phantom{{}-{}a \chi_A(x\cdot \nu_{i+1})  {}+{} } b \chi_B (x\cdot \nu_{i-1}) ,\\
				 \theta_{i-1}(x) = & \phantom{ {}-{}} a \chi_A(x\cdot \nu_{i+1}) 
		\end{split}
	\end{align}
	with $\nu_{j} \in N_j$ for $j \in \{1,2,3\}\setminus\{i\}$, measurable sets $A,B \subset \R$ and real numbers $a,b \geq 0 $ such that $a + b = 1$.
	Let us furthermore suppose that $a>0$ and $b>0$.
	
	Then we have that $\chi_{[\nu]} \in B^{2/3}_{1,\infty}(\Omega)$ for all $\nu \in N$ with the estimate
	\[\int_U |\partial_d^h \chi_{[\nu]}(x) |\intd x \leq  \frac{C}{\min(a,b)}\left(E_{inter}^\Leb(U_{ch})\right)^\frac{2}{3}\left(E_{elast}^\Leb(U_{ch})\right)^\frac{1}{3} h^\frac{2}{3}\]
	for all $d\in \Sph^1$, open sets $U\subset\subset \Omega$ and $h < \frac{1}{c}\dist(U,\partial\Omega)$.
	Furthermore, we have the same estimate for the characteristic functions \[\chi_{\{\theta_1= 0,\, \theta_2 = b,\, \theta_3 = a\}}\text{, }\chi_{\{\theta_1= 1-b,\, \theta_2 = b,\, \theta_3 = 0\}}\text{, }\chi_{\{\theta_1= 1-a,\, \theta_2 = 0,\, \theta_3 = a\}} \text{ and } \chi_{\{\theta_1= 1,\, \theta_2 = 0,\, \theta_3 = 0\}}\]
	of the sets on which $\theta$ is constant.
\end{thm}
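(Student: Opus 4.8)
The plan is to reduce the whole statement to a one-dimensional estimate for the two switching profiles $\chi_A$ and $\chi_B$, to establish that estimate on the part of $\Omega$ carrying genuine twinning by running the argument behind Theorem~\ref{thm:Besov_for_non-checkerboards} there, and then to propagate it across the pure-phase cells using the explicit form \eqref{checkerboard_h_measure}.

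First I would exploit the product structure of the checkerboard. Since $a,b\in(0,1)$ we may write $\chi_A(x\cdot\nu_{i+1})=\theta_{i-1}(x)/a$ and $\chi_B(x\cdot\nu_{i-1})=\theta_{i+1}(x)/b$, so each characteristic function of a level set of $\theta$ appearing in the statement equals a product $g(x\cdot\nu_{i+1})\,\tilde g(x\cdot\nu_{i-1})$ with $g\in\{\chi_A,1-\chi_A\}$ and $\tilde g\in\{\chi_B,1-\chi_B\}$. Moreover, by the decomposition of Lemma~\ref{lem:structmeas3D} together with Corollary~\ref{cor:diff_incl} and the rank-one connections \eqref{rank_one_connections}, the only phase pairs coexisting in \eqref{checkerboard_h_measure} are $(e_i,e_{i-1})$, connected along $N_{i+1}$, and $(e_i,e_{i+1})$, connected along $N_{i-1}$; hence $\chi_{[\nu]}\equiv 0$ for $\nu\notin N_{i+1}\cup N_{i-1}$, while for $\nu\in N_{i+1}$ (resp.\ $\nu\in N_{i-1}$) the function $\chi_{[\nu]}$ agrees a.e.\ with a $\{0,1\}$-valued function of $x\cdot\nu_{i+1}$ alone (resp.\ of $x\cdot\nu_{i-1}$) built out of $\chi_A$ (resp.\ $\chi_B$). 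For $F(x)=g(x\cdot e)\,\tilde g(x\cdot e')$ with $e,e'\in\Sph^2$ and $g,\tilde g$ bounded one has
\[\partial_d^hF(x)=\bigl[g(x\cdot e+h\,d\cdot e)-g(x\cdot e)\bigr]\,\tilde g(x\cdot e'+h\,d\cdot e')+g(x\cdot e)\,\bigl[\tilde g(x\cdot e'+h\,d\cdot e')-\tilde g(x\cdot e')\bigr],\]
and using $|d\cdot e|\le 1$, $|d\cdot e'|\le 1$, Fubini along the slices perpendicular to $e$ and to $e'$, and the boundedness of $g,\tilde g$, the whole theorem reduces to the one-dimensional estimate
\[\int_{J}|f(t+s)-f(t)|\intd t\ \le\ \frac{C}{\min(a,b)}\bigl(E_{inter}^\Leb(U_{ch})\bigr)^{2/3}\bigl(E_{elast}^\Leb(U_{ch})\bigr)^{1/3}\,h^{2/3}\]
for $f\in\{\chi_A,\chi_B\}$, all $|s|\le h$, and $J$ a bounded interval controlled by the projection of $U$.

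Second I would prove this one-dimensional estimate on the twinned region. Let $\Omega':=\{0<\theta_{i-1}<1\}\cup\{0<\theta_{i+1}<1\}$. By \eqref{checkerboard_h_measure} this is the union of the slabs $\{x\cdot\nu_{i+1}\in A\}$ and $\{x\cdot\nu_{i-1}\in B\}$ intersected with $\Omega$; on it there is no austenite, $\theta_j<1$ a.e.\ for every martensite index $j$, and the only values attained by $\theta_j$ in $(0,1)$ lie in $\{a,b,1-a,1-b\}$. Since $a+b=1$ gives $\max(a,b)\ge\tfrac12$ and hence $ab\ge\tfrac12\min(a,b)$, we obtain $18\,\theta_j(1-\theta_j)\ge 9\min(a,b)$ on $\{0<\theta_j<1\}$. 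Thus the hypotheses of Theorem~\ref{thm:Besov_for_non-checkerboards} hold on every open $V\subset\subset\Omega'$ with $\eps=9\min(a,b)$, and its proof --- which is local in $\Omega$, relying only on the structure-measure decomposition $\mu=\sum_\nu\mu_\nu$, Tartar's transport property, and the extraction of $\chi_{[\nu]}$ from the density of $\mu_\nu$ through the factor $18\theta_j(1-\theta_j)$ --- applies verbatim on $V$. This already yields the asserted bound with constant $C/\min(a,b)$ whenever $U$, together with the $h$-fattening of the support of $\partial_d^h\chi_{[\nu]}$, stays inside $\Omega'$.

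Third --- the heart of the matter --- I would propagate the estimate across the pure-phase cells. Since $\chi_{[\nu]}$ and the cell indicators are constant on the interior of each cell $\{\theta=\mathrm{const}\}$, the one-dimensional reduction leaves only the task of bounding $\int_{(\sigma-h,\sigma+h)}|f(t+s)-f(t)|\intd t$ near a boundary point $\sigma$ of $A$ (or of $B$). If both sides of the hyperplane $\{x\cdot\nu_{i+1}=\sigma\}$ meet $\overline{\Omega'}$, the second step applies; the genuinely new configuration is a cluster of twinned slabs $(\alpha_k,\beta_k)$ of the $(i,i-1)$-variant --- on which $\theta_{i-1}=a$ --- accumulating against a region of pure phase $i$. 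Here one must rule out that such an accumulation occurs with vanishing local energy: the point is that in a planar checkerboard the coarse lamination $\chi_A$ does not exist in isolation, and matching the displacement across the transverse interfaces $\{x\cdot\nu_{i-1}\in\partial B\}$ and across the corner cells $\{\theta_i=0\}$ forces, by the rescaling structure of $E_\eta$, a lower bound on the energy attributable to the accumulation --- the quantitative form of the $d^{2/3}$-scaling alluded to in the introduction. Combined with the elementary identity $\int_\R|\chi_{(\alpha_k,\beta_k)}(t+s)-\chi_{(\alpha_k,\beta_k)}(t)|\intd t=2\min(|s|,\beta_k-\alpha_k)$ and subadditivity over the slabs, this gives the $h^{2/3}$-bound on all of $U$, the fattening being absorbed into $U_{ch}$. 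I expect this final step to be the main obstacle: in the H-measure language it is the statement that $\mu_{\nu_{i+1}}$ cannot concentrate on a hyperplane $\{x\cdot\nu_{i+1}=\sigma\}$ beyond what $E_{inter}$ and $E_{elast}$ allow, and making it rigorous is precisely where the special geometry of \eqref{checkerboard_h_measure} enters and where the argument departs from the proof of Theorem~\ref{thm:Besov_for_non-checkerboards}.
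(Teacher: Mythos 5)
Your opening reduction rests on two structural claims that are false for the checkerboard, and the rest of the argument is built on them. First, $\chi_{[\nu]}$ does \emph{not} vanish for $\nu\in N_i$: on the cells where $\chi_A(x\cdot\nu_{i+1})=\chi_B(x\cdot\nu_{i-1})=1$ the volume fractions are $\theta_i=1-a-b=0$, $\theta_{i+1}=b>0$, $\theta_{i-1}=a>0$, so there the variants $e_{i+1}$ and $e_{i-1}$ coexist and are twinned along normals in $N_i$ by \eqref{rank_one_connections}; indeed \eqref{support_sum_chi} together with \eqref{disjoint_supports} gives $\sum_{\nu\in N_i}\chi_{[\nu]}=\chi_{\{\theta_i=0\}}$, which is exactly the identity \eqref{decomposition_nu_1} used in the paper. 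Second, and more fundamentally, the individual $\chi_{[\nu]}$ are not one-variable functions built from $\chi_A$ or $\chi_B$: the macroscopic data \eqref{checkerboard_h_measure} determine only the \emph{sums} over each pair of normals (e.g.\ $\chi_{\{\chi_A=1\}}\chi_{\{\chi_B=1\}}$ for $N_i$, or $\chi_{\{\chi_A=1\}}(1-\chi_{\{\chi_B=1\}})$ for $N_{i+1}$, both of which already depend on two variables), while the splitting between the two members of each pair --- i.e.\ which twin normal is selected locally --- is genuinely microscopic information that may vary inside a cell in all directions, subject only to finite energy. Controlling precisely this selection pattern is the content of the theorem, so the problem cannot be reduced to a one-dimensional Besov estimate for $\chi_A$ and $\chi_B$.

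Your third step, which you yourself flag as the main obstacle, is not carried out (the energy lower bound for twinned slabs accumulating against a pure-phase cell is only gestured at), and it is also not what is needed. Lemma \ref{lemma:Besov_incomplete} applies on all of $\Omega$, not only on the twinned region: its hypothesis $18\,\theta_j(1-\theta_j)\geq\eps$ on $\{0<\theta_j<1\}$ holds globally with $\eps=18ab\geq 9\min(a,b)$, so all difference quotients transversal to $\nu$ are already controlled up to $\partial\Omega$; the only gap relative to Theorem \ref{thm:Besov_for_non-checkerboards} is the direction $\nu$ itself, where that proof used $\sum_{\nu\in N}\chi_{[\nu]}\equiv 1$, which fails on the pure-phase cells. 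The paper's substitute is algebraic rather than energetic: each sum $\sum_{\tilde\nu\in N_i\cup N_{i+1}}\chi_{[\tilde\nu]}$ (resp.\ $\sum_{\tilde\nu\in N_i\cup N_{i-1}}\chi_{[\tilde\nu]}$) equals a function of $x\cdot\nu_{i+1}$ (resp.\ $x\cdot\nu_{i-1}$) alone, so its difference quotient in the lattice directions orthogonal to that variable vanishes, and the second-difference-quotient trick again yields the estimate for $\partial^h_\nu\chi_{[\nu]}$; the remaining directions (such as $\partial^h_{\nu_{i+1}}\chi_{[\nu]}$ for $\nu\in N_{i-1}$) and the cell indicators are then recovered from the identities \eqref{decomposition_nu_1} and \eqref{decomposition_nu_2}. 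Without a correct replacement for your first step and a rigorous version of your third, the proposal does not prove the theorem.
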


For the convenience of the reader, we give the definition of the relevant Besov space $B^{2/3}_{1,\infty}$.

\begin{Def}[{\cite[Chapter 1.10.3]{Triebel2}}]\label{def:Besov}
	For a function $f :\Omega \to \R$ let
	\[
		\partial_d^h f(x,\Omega) := \begin{cases}
			f(x + hd) - f(x) & \text{ if } x, x+ hd \in \Omega,\\
			0 & \text{ otherwise.}
		\end{cases}
	\]
	The Besov space $B^{2/3}_{1,\infty}(\Omega)$ can be defined as
	\[ B^{2/3}_{1,\infty}(\Omega) := \left\{f \in L^1(\Omega): \sup_{0<h\leq 1, d \in \Sph^2} |h|^{-\frac{2}{3}} ||\partial_d^h f(\bullet, \Omega)||_{L_1(\Omega)}<\infty \right\}.\]
\end{Def}
Note that we will drop the dependence of the difference operator $\partial_d^h$ on the domain whenever it is clear that $x, x+hd \in \Omega$.

Finally, with the methods developed in the paper we can also straightforwardly prove an essentially local lower bound on how the limiting energy concentrates close to a macroscopic interface.
It states that the energy density in a twinned region has to blow-up as $\tilde d^{-2/3}$, where $\tilde d$ is the distance to the interface.
We also expect this estimate to be optimal as it nicely fits the scaling $d^{1/3}$ for the energy between two macroscopic interfaces of distance $d$.
Furthermore, it is the expected scaling for (approximately) self-similar minimizers of the the Kohn-M\"uller functional, see Conti \cite{Conti00branched}.

For reasons of brevity we only state the lemma in the case of a habit plane.
However, a similar estimate is true on the both sides of an interface between two martensite twins with essentially the same proof.

\begin{lemma}\label{energy_density_habit_plane}
	There exists a universal constant $C>0$ with the following property:
	
	Let $\nu_1 \in N_1$ and let $\Omega =  \{x' \in \ball{0}{1}: x\cdot \nu_1 = 0\} + (-1,1) \nu_1$.
	Let $(u,\theta)$ be the limit of a finite energy sequence of displacements and partitions.
%	with H-measures $\mu_i\supdelta$ for $i=1,2,3$ and $\delta \geq 0$ on $\Omega$.	
	Furthermore, let the volume fractions $\theta$ and the H-measures describe a habit plane at $x\cdot \nu_1 = 0$ joining austenite with the variants $e_1$ and $e_2$ twinned in direction $\nu_3$, see also Figure \ref{fig:habit_plane_sketch}:
	\begin{enumerate}
		\item We have
			\begin{align*}
				\theta_0 & \equiv \chi_{(-1,0)}(\bullet \cdot \nu_1),\\
				\theta_1 & \equiv \frac{1}{3}\chi_{(0,1)}(\bullet \cdot \nu_1),\\
				\theta_2 & \equiv \frac{2}{3}\chi_{(0,1)}(\bullet \cdot \nu_1),\\
				\theta_3 & \equiv 0,
			\end{align*}
			which is equivalent to
			\[e(u) \equiv \chi_{(-1,0)}(\bullet\cdot \nu_1)e_0 + \chi_{(0,1)}(\bullet\cdot \nu_1) \left(\frac{1}{3}e_1 + \frac{2}{3}e_2\right).\]
		\item There exist $\nu_3 \in N_3$ such that
	\[\chi_{[\nu_3]} \equiv \chi_{(0,1)}(\bullet\cdot \nu_1).\]
	\end{enumerate}
	
	Then for any direction $d \in \Sph^2$ transversal to the habit plane and normal to the direction of twining, i.e., such that $d\cdot \nu_1 >0$ and $d\cdot \nu_3 = 0$, the following holds:
	For any $0<h$ small enough and $\mathcal{H}^2$-almost all $x' \in \R^3$ with $x' \cdot \nu_1 = 0$ and $|x'|<1$ the energy densities satisfy the lower bound
	\[ \left(E_{inter}^\Leb\right)^\frac{2}{3}(x' + h d) \left(\dashint_0^h E_{elast}^\Leb (x' + sd) \intd s \right) ^\frac{1}{3} \geq  C h^{-\frac{2}{3}}.\]
\end{lemma}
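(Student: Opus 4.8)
The plan is to run the $B^{2/3}_{1,\infty}$-estimate behind Theorem~\ref{thm:Besov_for_non-checkerboards} backwards: near the habit plane the characteristic function $\chi_{[\nu_3]}$ jumps from $0$ to $1$, so its finite-difference quotient in any transversal direction is of order one on an $O(h)$-slab, and inserting this lower bound into a localised form of the upper estimate forces the product of the two energy densities to be large. Concretely, by hypothesis (b) we have $\chi_{[\nu_3]}(x)=\chi_{(0,1)}(x\cdot\nu_1)$, and since $d\cdot\nu_1>0$ one checks from Definition~\ref{def:Besov} that, for $h$ small enough, $\partial_d^h\chi_{[\nu_3]}(y)=1$ precisely for $y\in\Omega$ with $y\cdot\nu_1\in(-h\,(d\cdot\nu_1),0]$. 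Hence, on a cube $Q=Q_r(z)$ of side $r$ positioned so that this slab meets $Q$ in a set of full cross-section --- for instance $z\cdot\nu_1=-r/4$ and $h\le r/2$ --- one gets $\int_Q|\partial_d^h\chi_{[\nu_3]}|\intd x\gtrsim (d\cdot\nu_1)\,r^2\,h$.

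The second ingredient is a localised version of the estimate proved for Theorem~\ref{thm:Besov_for_non-checkerboards}, applied on $Q$. Although that theorem's global hypotheses fail here because austenite is present ($\theta_0\equiv1$ on $\{x\cdot\nu_1<0\}$), the non-degeneracy parameter $\eps$ is only needed in the region where $\chi_{[\nu_3]}$ is non-constant, namely the twinned side $\{x\cdot\nu_1>0\}$, where $\theta_1\equiv\tfrac13$, $\theta_2\equiv\tfrac23$ and therefore $18\theta_i(1-\theta_i)\equiv4$. This is exactly the martensite--martensite situation alluded to after the statement, and the proof of Theorem~\ref{thm:Besov_for_non-checkerboards} goes through with $\eps=4$ to give $\int_Q|\partial_d^h\chi_{[\nu_3]}|\intd x\le C\,E_{inter}^\Leb(Q_{ch})^{\tfrac23}E_{elast}^\Leb(Q_{ch})^{\tfrac13}h^{\tfrac23}$. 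Combining this with the lower bound of the first step, taking $r\sim h$, and dividing by $|Q_{ch}|\sim r^3$ yields
\[\Big(\dashint_{Q_{ch}}E_{inter}^\Leb\Big)^{\!\tfrac23}\Big(\dashint_{Q_{ch}}E_{elast}^\Leb\Big)^{\!\tfrac13}\gtrsim (d\cdot\nu_1)\,h^{-\tfrac23}\]
for every cube $Q$ of this type straddling the habit plane.

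It then remains to pass from this cube-averaged inequality to the pointwise bound in the statement. For $\mathcal{H}^2$-almost every $x'$ with $x'\cdot\nu_1=0$, $|x'|<1$, and almost every small $h$, the point $x'+hd$ is a Lebesgue point of $E_{inter}^\Leb$ and $s\mapsto E_{elast}^\Leb(x'+sd)$ is integrable on $(0,h)$; applying the displayed estimate on cubes of side $\sim h$ aligned with the segment $\{x'+sd:s\in(0,h)\}$ and straddling the habit plane, and averaging over the two directions tangent to the plane (Fubini and Lebesgue differentiation), the interfacial average tends to $E_{inter}^\Leb(x'+hd)$ and the elastic average is, up to a constant, bounded by $\dashint_0^h E_{elast}^\Leb(x'+sd)\intd s$. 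Here the condition $d\cdot\nu_3=0$ is used to keep the thin cubes clear of the fine twin oscillations near the segment; to absorb the factor $d\cdot\nu_1$ into the universal constant one adapts the cube scale to the true penetration depth $h\,(d\cdot\nu_1)$ and uses the form of the estimate in which the energy is fattened only in the difference direction.

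I expect the main obstacle to be precisely this last step: verifying that the constant in the localised $B^{2/3}_{1,\infty}$-estimate does not degenerate near a habit plane --- both because of the austenite and because of the near-tangential admissible directions $d$ --- and organising the localisation so that exactly the energy in the $O(h\,(d\cdot\nu_1))$-tube around the segment is picked up rather than the $c$-fattened cube. Everything else is bookkeeping with the exponents $\tfrac23$ and $\tfrac13$, which are the ones making the slab thickness $h$, the cube volume $h^3$ and the Besov scale $h^{2/3}$ combine into $h^{-2/3}$; the martensite--martensite analogue mentioned after the statement follows from the same steps with the habit plane replaced by a twin interface, so that no austenite occurs at all.
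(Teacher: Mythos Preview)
Your approach is different from the paper's and more circuitous. The paper does \emph{not} go through the integrated Besov estimate of Theorem~\ref{thm:Besov_for_non-checkerboards}; instead it applies its building blocks directly along a single line $s\mapsto x'+sd$. Since $d\cdot\nu_3=0$, Proposition~\ref{prop:transport_property} gives the pointwise differential inequality for $\tau_2^{(\delta)}\chi_{[\nu_3]}$ in the direction $d$, and because $\chi_{[\nu_3]}$ vanishes on the austenite side, Lemma~\ref{lemma:differential_inequality} yields
\[
\tau_2^{(\delta)}\chi_{[\nu_3]}(x'+hd)\ \lesssim\ \frac{h^2}{\delta^2}\dashint_0^h E_{elast}^\Leb(x'+sd)\intd s.
\]
On the other hand, Lemma~\ref{lemma:mass_in_approx_converges} gives the pointwise lower bound $\tau_2^{(\delta)}(x'+hd)\geq 18\cdot\tfrac{2}{3}\cdot\tfrac{1}{3}-36\,\delta\,E_{inter}^\Leb(x'+hd)$. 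Combining and optimising in $\delta$ produces the claim immediately. Everything is pointwise from the start; no cubes, no tangential localisation.

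Your route via cubes yields the cube-averaged inequality correctly (and your observation that $\eps=4$ on $\{\chi_{[\nu_3]}=1\}$ is exactly what makes Lemma~\ref{lemma:Besov_incomplete} applicable despite the austenite), but the final ``de-integration'' step is a genuine obstacle, not bookkeeping. The target inequality has $E_{inter}^\Leb$ evaluated at the \emph{single point} $x'+hd$, while your estimate only sees averages over cubes of side $\gtrsim h$ (the Besov bound needs the domain fattened by $ch$ in the $d$-direction). Shrinking the cube tangentially to recover the pointwise $E_{inter}^\Leb$ forces you to redo the argument on thin tubes, at which point you are essentially reproducing the paper's line-by-line proof anyway. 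The cleaner path is to skip Theorem~\ref{thm:Besov_for_non-checkerboards} and use Proposition~\ref{prop:transport_property}, Lemma~\ref{lemma:mass_in_approx_converges} and Lemma~\ref{lemma:differential_inequality} directly. Note also that the role of $d\cdot\nu_3=0$ is precisely to satisfy the hypothesis of Proposition~\ref{prop:transport_property}, not (as you suggest) a geometric device to avoid fine oscillations.
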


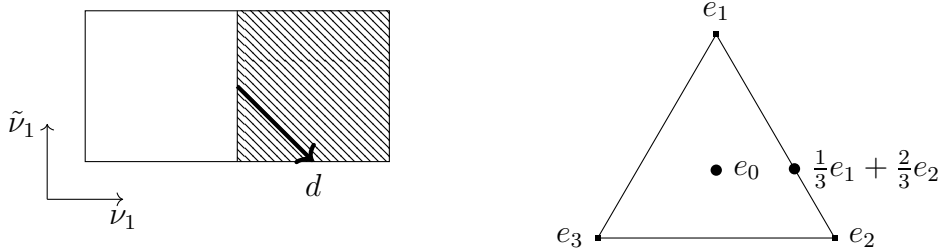
\begin{figure}[htbp]
 \begin{subfigure}{.45\linewidth}
  \centering
  \begin{tikzpicture}
   \draw (-2,-1) -- (2,-1) -- (2,1) -- (-2,1) -- cycle;
   \draw (0,-1) -- (0,1);
   \fill[pattern=north west lines] (0,-1) -- (0,1) -- (2,1) -- (2,-1);
   \draw[->] (-2.5,-1.5) -- (-1.5,-1.5) node[anchor=north] {$\nu_1$};
   \draw[->] (-2.5,-1.5) -- (-2.5,-.5) node[anchor=east] {$\tilde \nu_1$};
   \draw[->,ultra thick] (0,0) -- (1,-1) node[anchor=north] {$d$};
  \end{tikzpicture}
 \end{subfigure}
 \begin{subfigure}{.45\linewidth}
  \centering
 	\begin{tikzpicture}[scale=1.8]
% 	  \node at (-1,1) {b)};
	  \draw (90:1) -- (210:1) -- (330:1) -- cycle;
	  \node [fill=black,inner sep=1pt,label=90:$e_1$] at (90:1) {};
	  \node [fill=black,inner sep=1pt,label=180:$e_3$] at (210:1) {};
	  \node [fill=black,inner sep=1pt,label=0:$e_2$] at (330:1) {};
	  \node [circle,fill=black,inner sep=1.5pt,label=0:$e_0$] at (0,0) {};
	  \node [circle,fill=black,inner sep=1.5pt,label=0:$\frac{1}{3}e_1 + \frac{2}{3}e_2$] at ($(90:1)!.66!(330:1)$) {};
% 	  \node at (30:1) {$V_2$};
% 	  \node at (150:1) {$V_3$};
% 	  \node at (270:1) {$V_1$};
%  	  \node at (0:0.3) {$\K$};
	\end{tikzpicture} 
 \end{subfigure}
 \caption{
  The sketch on the left shows the support of the H-measures $\sigma_1=\sigma_2$ on the cross-section $\Omega \cap \{x_1 = 0\}$ with $\tilde \nu_1 \in N_1\setminus\{\nu_1\}$.
  The blank area corresponds to austenite, and the hatched area indicates twinning with normal $\nu_3$.
  The plot on the right-hand side shows the average strains.
 }
 \label{fig:habit_plane_sketch}
\end{figure}

Finally, it is interesting to note that all estimates only depend on the density of the limiting energy measure with respect to Lebesgue measure.
This is consistent with the energy contribution on boundary layers close to the habit plane being of lower order in constructions of habit planes, see for example \cite{KM94branching, CO09}.

\section{Intermediate statements}\label{sec:statements}
\subsection{Existence of the H-measures}\label{subsec:H-measure_existence}
A straightforward application of Korn's inequality ensures that after subtraction of a skew-symmetric linear function $Du_\eta$ is bounded in $L^ 2(\Omega)$.
Thus, after subtracting constants and passing to a subsequence we get the existence of $u\in W^{1,2}(\Omega, \R^3)$ such that $u_\eta \rightharpoonup u$ in $W^{1,2}(\Omega, \R^3)$.

As we will see later, we have to regularize the displacement for the transport property to hold.
To this end, we consider $u^{(\delta)}_{\eta}:= \varphi_{\delta\eta^{\frac{1}{3}}}\ast u_\eta$ for $\delta>0$, where $\varphi$ is a smooth, radially symmetric convolution kernel supported on $B_1(0)$.
A weak-times-strong argument proves that for each $\delta>0$ we still have $u^{(\delta)}_{\eta} \rightharpoonup u$ in $W^{1,2}_{loc}$.

By the existence theorem for H-measures \cite[Theorem 1.1]{Tartar90} we can extract a subsequence such that for $i,j,k =1,2,3$ the H-measures $\mu_i\left(E_j,E_k; \psi_1\psi_2^* \otimes a \right)$ of the pairs of sequences $\partial_j u_{i,\eta}-\partial_j u_i$ and $\partial_k u_{i,\eta}-\partial_k u_i$ exist as limits of
\[\int_{\R^ 2}\F\left(\psi_1\left(\partial_j u_{i,\eta}-\partial_j u_i\right)\right)\F^*\left(\psi_2\left(\partial_k u_{i,\eta}-\partial_k u_i\right)\right)a\left(\frac{\xi}{|\xi|}\right)\intd \xi\]
for $\psi_1,\psi_2:\R^3 \to \mathbb{C}$ and $a: \Sph^2 \to \mathbb{C}$.
By linearity, it is sufficient to consider the case that $\psi_1,\psi_2$ and $a$ take values in $\R$.
Note that we follow Tartar \cite{Tartar90} in using the convention
\begin{align*}
	\F f(\xi) = \int_{\R^3} f(x) e^{-2\pi i x\cdot \xi}\intd x.
\end{align*}
Furthermore, we may assume that the H-measures $\mu_i\supdelta\left(E_j,E_k; \bullet \otimes \bullet \right)$ associated to the sequences $\partial_j u^{(\delta)}_{i,\eta}-\partial_j u_i$ and $\partial_k u^{(\delta)}_{i,\eta}-\partial_k u_i$ for $i,j,k=1,2,3$ exist along a subsequence for a countable, dense subset of $\{\delta>0\}$.
The following straightforward lemma ensures that the convergence in fact extends to all $\delta>0$.

\begin{lemma}\label{lemma:H_measure_non-dense_set}
 If for $i,j,k=1,2,3$ the H-measures $\mu_i\supdelta\left(E_j,E_k; \bullet \otimes \bullet \right)$ exist for all parameters $\delta \in N \subset (0,\infty)$ with $\bar N = [0,\infty)$, then they also exist for $\delta \in \bar N \cap (0,\infty)$.
\end{lemma}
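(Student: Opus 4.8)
The plan is to fix a triple of test functions and view the defining quadratic expression as a function of the mollification parameter; I will show this function is, uniformly in $\eta$, bounded and Lipschitz on compact subintervals of $(0,\infty)$, so that convergence on the dense set $N$ propagates to all of $(0,\infty)$. Concretely, fix real-valued $\psi_1,\psi_2$ supported in a common compact $K\subset\Omega$ and continuous $a:\Sph^2\to\R$, and for $\delta>0$ and $\eta$ small enough that $K+\ball{0}{\delta\eta^{1/3}}\subset\Omega$ put
\[
 g_\eta(\delta):=\int_{\R^3}\F\bigl(\psi_1(\partial_j u_{i,\eta}\supdelta-\partial_j u_i)\bigr)\,\overline{\F\bigl(\psi_2(\partial_k u_{i,\eta}\supdelta-\partial_k u_i)\bigr)}\;a\!\left(\frac{\xi}{|\xi|}\right)\intd\xi .
\]
Existence of $\mu_i\supdelta(E_j,E_k;\bullet\otimes\bullet)$ along the already fixed subsequence means exactly that $\lim_{\eta\to0}g_\eta(\delta)$ exists for every such triple; this is assumed for $\delta\in N$ and must be obtained for every $\delta\in(0,\infty)=\bar N\cap(0,\infty)$.

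For the estimates I would extend $u_\eta$ — bounded in $W^{1,2}(\Omega)$ as recalled above — to $\tilde u_\eta\in W^{1,2}(\R^3;\R^3)$ with uniformly bounded norm, so that $\partial_j u_{i,\eta}\supdelta=\varphi_{\delta\eta^{1/3}}\ast\partial_j\tilde u_{i,\eta}$ on $K$ for $\eta$ small. Young's inequality then bounds $\|\psi_1(\partial_j u_{i,\eta}\supdelta-\partial_j u_i)\|_{L^2}$ uniformly in $\eta$ small and $\delta$ in a fixed compact, and with Plancherel and Cauchy–Schwarz (recall $\|\F h\|_{L^2}=\|h\|_{L^2}$ in Tartar's normalization) this bounds $|g_\eta(\delta)|$ uniformly. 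Telescoping $g_\eta(\delta)-g_\eta(\delta')$ into two terms in each of which one factor is unchanged and the other replaced by $\psi_1(\partial_j u_{i,\eta}\supdelta-\partial_j u_{i,\eta}^{(\delta')})$, respectively $\psi_2(\partial_k u_{i,\eta}\supdelta-\partial_k u_{i,\eta}^{(\delta')})$ — the fixed $\partial_j u_i,\partial_k u_i$ cancelling — it suffices to bound $\|\partial_j u_{i,\eta}\supdelta-\partial_j u_{i,\eta}^{(\delta')}\|_{L^2(K)}$, which on the Fourier side is $\bigl(\int_{\R^3}|\F\varphi(\delta\eta^{1/3}\xi)-\F\varphi(\delta'\eta^{1/3}\xi)|^2\,|\F(\partial_j\tilde u_{i,\eta})(\xi)|^2\intd\xi\bigr)^{1/2}$. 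The crux — and the only genuinely delicate point — is that for $\delta,\delta'$ in a fixed compact $[a_0,b_0]\subset(0,\infty)$, the mean value theorem applied to $t\mapsto\F\varphi(t\eta^{1/3}\xi)$ gives
\[
 \bigl|\F\varphi(\delta\eta^{1/3}\xi)-\F\varphi(\delta'\eta^{1/3}\xi)\bigr|\le\frac{|\delta-\delta'|}{a_0}\,\sup_{w\in\R^3}|w|\,|\nabla\F\varphi(w)| ,
\]
the supremum being finite and independent of $\xi$ and $\eta$ because $\varphi\in C_c^\infty$ makes $\F\varphi$ Schwartz; the naive bound $|\delta-\delta'|\,|\eta^{1/3}\xi|\,\|\nabla\F\varphi\|_\infty$ must be avoided, being useless at the high frequencies where $\F(\partial_j\tilde u_{i,\eta})$ need not be small. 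Since $\|\F(\partial_j\tilde u_{i,\eta})\|_{L^2}=\|\partial_j\tilde u_{i,\eta}\|_{L^2}\le C$, this yields $\|\partial_j u_{i,\eta}\supdelta-\partial_j u_{i,\eta}^{(\delta')}\|_{L^2(K)}\le C|\delta-\delta'|$ and hence $|g_\eta(\delta)-g_\eta(\delta')|\le C|\delta-\delta'|$ uniformly in $\eta$ small, with $C$ depending only on $\psi_1,\psi_2,a,\varphi,a_0$ and the uniform $W^{1,2}$-bounds on $u_\eta$ and $u$.

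To finish, given $\delta_0\in(0,\infty)$ I pick $\delta_n\in N$ with $\delta_n\to\delta_0$, lying (for $n$ large) in a compact $[a_0,b_0]\ni\delta_0$, and for $\eta,\eta'$ small write
\[
 |g_\eta(\delta_0)-g_{\eta'}(\delta_0)|\le|g_\eta(\delta_0)-g_\eta(\delta_n)|+|g_\eta(\delta_n)-g_{\eta'}(\delta_n)|+|g_{\eta'}(\delta_n)-g_{\eta'}(\delta_0)| .
\]
The outer terms are $\le C|\delta_0-\delta_n|$ by the Lipschitz estimate, and for each fixed $n$ the middle term tends to $0$ as $\eta,\eta'\to0$ since $\delta_n\in N$; choosing $n$ large and then $\eta,\eta'$ small shows $g_\eta(\delta_0)$ is Cauchy, hence convergent. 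As $\psi_1,\psi_2,a$ were arbitrary, $\mu_i\supdelta(E_j,E_k;\bullet\otimes\bullet)$ exists for every $\delta\in\bar N\cap(0,\infty)$. I expect the only step requiring real thought to be the $\eta$-uniform control of $\F\varphi(\delta\eta^{1/3}\xi)-\F\varphi(\delta'\eta^{1/3}\xi)$ highlighted above; everything else, including the routine domain technicalities around the mollification, is bookkeeping.
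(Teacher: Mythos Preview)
Your argument is correct and follows the same overall strategy as the paper: establish that the defining integrals are Lipschitz in $\delta$ uniformly in $\eta$, then pass from the dense set $N$ to all of $(0,\infty)$ by a Cauchy-sequence argument. The difference lies only in how the uniform Lipschitz bound is obtained. The paper works entirely in physical space: using the scaling identity
\[
\left\|\varphi_{\delta_1\eta^{1/3}}-\varphi_{\delta_2\eta^{1/3}}\right\|_{L^1(\R^3)}
=\left\|\varphi_{\delta_1/\delta_2}-\varphi_1\right\|_{L^1(\R^3)}
\lesssim\left|1-\tfrac{\delta_1}{\delta_2}\right|,
\]
Young's inequality immediately gives $\|\nabla u_\eta^{(\delta_1)}-\nabla u_\eta^{(\delta_2)}\|_{L^2}\lesssim|1-\delta_1/\delta_2|\sup_\eta\|\nabla u_\eta\|_{L^2}$, and the rest is the same telescoping you carry out. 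This sidesteps any frequency-space considerations; in particular the issue you flag as ``the only genuinely delicate point'' --- rescuing the mean-value estimate on $\F\varphi$ at high frequencies via the Schwartz decay $\sup_w|w|\,|\nabla\F\varphi(w)|<\infty$ --- simply does not arise. Your Fourier-side argument is a perfectly valid alternative and makes explicit why the $\eta$-dependence disappears, but the paper's route is shorter.
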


For convenience, we will set $\mu_k^{(0)} := \mu_k$.
As H-measures are bilinear in their generating sequences, the H-measures $\mu_k\supdelta\left(v,w;\bullet\otimes \bullet\right)$ for $k=1,2,3$ and $\delta \geq 0$ associated to the partial derivatives in all directions $v,w \in \R^3$ exist.
In fact, we can think of $\mu_k\supdelta$ as measure-valued bilinear forms on $\R^3$.
% for all $\psi \in C_c(\R^3,\mathbb{C})$ and $a \in C(\Sph^2, \mathbb{C})$.

% AUSKOMMENTIEREN!
% 
% (Of course we could also pair the derivatives of $u_{\eta,i}-u_i$ with $u_{\eta,j}-u_j$ for $i\neq j$, but the corresponding H-measures should not carry any additional information.
% Using the Cauchy-Schwartz inequality one can see that it can only have support where the H-measures of $u_{\eta,i}-u_i$ and $u_{\eta,j}-u_j$, paired with themselves, have support.
% Using the more detailed information derived later and the rigidity result in \cite{CO12} it should be possible to identify it as the common components of the latter measures.)

\subsection{Structure of the H-measures}\label{subsec:H-measure_structure}

We begin analyzing the H-measures by noting that the displacements solve six inhomogeneous wave equations, which result from an interplay between the general integrability condition $\partial_i g_j = \partial_j g_i$ of a gradient field $g$ and the symmetric gradient almost being diagonal and trace-free in our problem.
As we will later want to have fully localized statements, we make sure that the local dependence of the inhomogeneities on the energy density is reflected in the statement.

\begin{lemma}\label{lemma:H-measure_wave_equations}
 There exists a universal constant $c>0$ with the following property:
 The displacements $u\supdelta_\eta$ satisfy the differential constraints
 \begin{align}
  \begin{split}
   \partial_{[111]}\partial_{[\overline 111]} u\supdelta_{1,\eta}& = \Div h\supdelta_{1,\eta},\\
   \partial_{[1\overline11]}\partial_{[ 11\overline1]} u\supdelta_{1,\eta} & = \Div h\supdelta_{2,\eta},\\
   \partial_{[1\overline11]}\partial_{[111]} u\supdelta_{2,\eta} & = \Div h\supdelta_{3,\eta},\\
   \partial_{[\overline111]}\partial_{[ 11\overline1]} u\supdelta_{2,\eta} & = \Div h\supdelta_{4,\eta},\\
   \partial_{[111]}\partial_{[ 11\overline1]} u\supdelta_{3,\eta} & = \Div h\supdelta_{5,\eta},\\
   \partial_{[1\overline11]}\partial_{[\overline 111]} u\supdelta_{3,\eta} & = \Div h\supdelta_{6,\eta}.
  \end{split}
 \end{align}
 Here the vector fields $h_{i,\eta}\supdelta :\Omega \to \R^3$ for $\delta>0$ and $i=1,\ldots, 6$  satisfy
 \[h_{i,\eta}\supdelta = \varphi_{\delta \eta^\frac{1}{3}} \ast h_{i,\eta}^{(0)}\]
 on $\{x \in \Omega: \dist(x,\partial \Omega) > \delta\eta^\frac{1}{3}\}$.
 Furthermore, we have
 \[\int_\Omega \psi^2 |h^{(0)}_{i,\eta}|^2  \intd x \leq c \eta^{\frac{2}{3}} E_{elast,\eta}(\psi^2)\numberthis\label{3D:error_constraint}\]
 for all $\psi \in C_c(\Omega;\R)$.
 In particular, we have $h_{i,\eta}\supdelta \to 0$ in $L^2_{loc}(\Omega)$ for $i=1,\ldots, 6$ and $\delta \geq 0$.
%  \[\int_\Omega \psi^2 |h\supdelta_{\eta}|^2  \intd x \leq c \eta^{\frac{2}{3}} E_{elast,\eta}(u_\eta,\chi_\eta)\]
%  for all $\psi \in C_c(\Omega;[0,1])$ such that $\supp(\psi) + \ball{0}{\delta} \subset \Omega$.
\end{lemma}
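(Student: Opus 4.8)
\textbf{Proof proposal for Lemma \ref{lemma:H-measure_wave_equations}.}

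The plan is to derive the six scalar wave-type PDEs directly from the kinematic compatibility of the gradient field $Du_\eta$ together with the near-diagonal, near-trace-free structure forced by the elastic energy. First I would recall that for any function $v \in W^{1,2}$ the components of $Dv$ satisfy the Schwarz identity $\partial_j(\partial_k v) = \partial_k(\partial_j v)$. Applying this with $v = u_{i,\eta}$ and reorganizing, the mixed second derivatives $\partial_j \partial_k u_{i,\eta}$ can be re-expressed in terms of $\partial_i$ of the diagonal entries $\partial_j u_{j,\eta}$ and $\partial_k u_{k,\eta}$ of the strain; concretely, $\partial_j \partial_k u_{i,\eta} = \partial_i\partial_j u_{k,\eta} - \partial_i\partial_k u_{j,\eta} + \text{(symmetrizations)}$ when one uses $\partial_j u_{k,\eta} + \partial_k u_{j,\eta} = 2 e(u_\eta)_{jk}$, an $o_{L^2}(\eta)$ quantity by \eqref{relation_Du_chi} and the energy bound. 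The point is that the second-order operators $\partial_{[111]}\partial_{[\overline111]}$ etc.\ on the left-hand sides are precisely the combinations of $\partial_j\partial_k$ that annihilate the diagonal terms $-3\chi_{i,\eta}$ while retaining only derivatives of $e(u_\eta) - \sum \chi_\ell e_\ell$. One checks by a direct (routine) computation with the normals in \eqref{def:normals} and the rank-one relations \eqref{rank_one_connections} that each product of two null directions, when hit against $Du_{i,\eta}$, produces exactly $\Div$ of a vector field whose entries are linear in the components of the off-diagonal and trace parts of the elastic strain $e(u_\eta) - \sum\chi_\ell e_\ell$, up to harmless lower-order terms coming from the $o_{L^2}(\eta)$ in \eqref{relation_Du_chi}. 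This defines $h_{i,\eta}^{(0)}$.

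Once $h_{i,\eta}^{(0)}$ is identified as an explicit linear combination of entries of $e(u_\eta) - \sum_\ell\chi_\ell e_\ell$, the bound \eqref{3D:error_constraint} is immediate: pointwise $|h_{i,\eta}^{(0)}|^2 \leq c\, |e(u_\eta) - \sum_\ell \chi_\ell e_\ell|^2$ for a universal $c$, so multiplying by $\psi^2$ and integrating gives $\int_\Omega \psi^2|h_{i,\eta}^{(0)}|^2 \leq c \int_\Omega \psi^2 |e(u_\eta) - \sum_\ell\chi_\ell e_\ell|^2 = c\, \eta^{2/3} E_{elast,\eta}(\psi^2)$ by the definition of $E_{elast,\eta}$ as a measure. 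The $L^2_{loc}$ decay of $h_{i,\eta}^{(0)}$ then follows from $\eta^{2/3}\limsup_\eta E_{elast,\eta}(\Omega) \to 0$.

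For the regularized version, I would apply the convolution $\varphi_{\delta\eta^{1/3}}\ast$ to both sides of each PDE. Since the differential operators are constant-coefficient, they commute with convolution, and on the inner set $\{x: \dist(x,\partial\Omega) > \delta\eta^{1/3}\}$ the mollified field $u_{i,\eta}^{(\delta)} = \varphi_{\delta\eta^{1/3}}\ast u_{i,\eta}$ is well-defined and the identity $\partial_{[\cdots]}\partial_{[\cdots]} u_{i,\eta}^{(\delta)} = \Div\big(\varphi_{\delta\eta^{1/3}}\ast h_{i,\eta}^{(0)}\big)$ holds there; this is exactly the asserted $h_{i,\eta}^{(\delta)} = \varphi_{\delta\eta^{1/3}}\ast h_{i,\eta}^{(0)}$. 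The universal constant $c>0$ in the statement absorbs the loss in the pointwise comparison above. Finally, $h_{i,\eta}^{(\delta)}\to 0$ in $L^2_{loc}$ follows from $h_{i,\eta}^{(0)}\to 0$ in $L^2_{loc}$ and the fact that $\varphi_{\delta\eta^{1/3}}$ is an approximate identity with shrinking support, using Young's inequality on a slightly larger compact set: $\|h_{i,\eta}^{(\delta)}\|_{L^2(K)} \leq \|h_{i,\eta}^{(0)}\|_{L^2(K')}$ for $K \subset\subset K' \subset\subset \Omega$ once $\delta\eta^{1/3}$ is small enough.

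The main obstacle is the bookkeeping in the first paragraph: one must verify that the specific pairs of null directions $\{[111],[\overline111]\}$, $\{[1\overline11],[11\overline1]\}$, etc., listed in the six equations are exactly the ones for which the second-order differential operator applied to $Du_{i,\eta}$ kills the bad diagonal term $-3\chi_{i,\eta}$ (equivalently, that the symbol $\xi\mapsto (\xi\cdot[111])(\xi\cdot[\overline111])$ vanishes on the relevant twin normal cone dictated by \eqref{rank_one_connections}) while the surviving terms assemble into a divergence of something controlled by the elastic energy. This is a finite, explicit check over the $3\times 3$ symmetric strain and the twelve normals in $N$, but it requires care to get the indices and the cyclic convention right; the estimate \eqref{3D:error_constraint} and the mollification step are then essentially formal.
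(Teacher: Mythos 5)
Your proposal follows essentially the same route as the paper: use the Schwarz/strain-compatibility identities to rewrite the six second-order operators applied to $u_{i,\eta}$ as divergences of vector fields that are linear in the off-diagonal and trace parts of $e(u_\eta)-\sum_\ell\chi_\ell e_\ell$ (the diagonal oscillating terms combine into the trace, which is small since the $e_\ell$ are trace-free), which yields the pointwise bound behind \eqref{3D:error_constraint}, and then commute the constant-coefficient operators with the mollification to handle $\delta>0$. The only slightly misleading point is the parenthetical claim that the operators are those whose symbol vanishes ``on the twin normal cone'': the support information in Fourier space is a later consequence of these equations via the localization principle, not the mechanism of their derivation, but this side remark does not affect the correctness of your argument.
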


The localization principle of H-measures states that linear differential constraints such as those given above contain information about the support of the measures in the Fourier variable.
In general, it allows to use the fact that the H-measures are generated by gradients to reduce their complexity, see equation \eqref{lem:structmeas:gradientmeas3D}.
More importantly, the equations of Lemma \ref{lemma:H-measure_wave_equations} result in a decomposition of the H-measures into Dirac measures on six discrete directions of oscillation.
It is analogous to the decomposition of the strain into functions of one variable that is central to proving the rigidity of twins, see the papers \cite{DM95,CO09,CO12}.
%that is analogous to the central decomposition of the strain into functions of one variable used in
%The resulting structure 
%At least after having seen that these constraints imply the decomposition in Lemma \ref{lemma: decomposition}, it is not surprising that in this instance the H-measures are supported on the directions $\nu \in N$, see Subsection \ref{subsec:notation} for the definition.
%In particular, we get the same combinatorics in both cases as can be seen by comparing Lemma \ref{lemma: decomposition} with Equation \eqref{3D:lem:decompostion_of_gradient_measure} below.

We also get an expression for the mass of the H-measures $\mu_i^{(0)}$ in equation \eqref{sum_A_old}, although some post-processing in Corollary \ref{cor:diff_incl} will get rid of the austenitic contributions.
Finally, inequality \eqref{sigma_delta_leq_sigma} is a result of $\mu_i^{(\delta)}$ involving a convolution.

\begin{lemma}\label{lem:structmeas3D}
 For $\delta \geq 0$ there exist non-negative measures $\sigma_i\supdelta$ on $\Omega \times \Sph^2$ for $i\in \{1,2,3\}$ and measurable, non-negative functions $A\supdelta_{[\nu]} \in L^\infty(\Omega)$ for $\nu \in N$ such that the following hold:
 For all $\psi \in C_c(\Omega)$ and $a \in C(\Sph^2)$ we have
 \begin{align}
  \mu_i\supdelta(v,w;\psi\otimes a) & = \sigma_i\supdelta\left(\psi \otimes (v\cdot\xi)(w\cdot \xi) a\right),\label{lem:structmeas:gradientmeas3D}\\
  \sigma_i\supdelta(\psi \otimes a) & = \int_\Omega  \psi(x) \sum_{\nu \in N_{i-1} \cup N_{i+1}} A_{[\nu]}\supdelta(x) \delta_{[\nu]}(a)  \intd x, \label{3D:lem:decompostion_of_gradient_measure}
 \end{align}
 where $\delta_{[\nu]}$ is defined as
 \[\delta_{[\nu]}:=\frac{1}{2}\left(\delta_{\frac{\nu}{|\nu|}}+\delta_{-\frac{\nu}{|\nu|}}\right)\numberthis \label{def:dirac_on_projective}\]
 for $\nu \in N$.
 Furthermore, we have
 \begin{align}
 	\sum_{\nu \in N_{i+1} \cup N_{i-1}} A_{[\nu]}^{(0)}& \equiv 18\,\theta_i(1-\theta_i) - 12\, \theta_0\theta_i + 2\, \theta_0(1-\theta_0),& \label{sum_A_old}\\
 	\sigma_i\supdelta & \leq \sigma_i  \label{sigma_delta_leq_sigma}
 \end{align}
 for $\delta > 0$.
\end{lemma}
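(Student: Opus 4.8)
The plan is to read off all four assertions from the localization principle of H-measures: first from the curl-free identities for $\nabla u\supdelta_{i,\eta}-\nabla u_i$, then from the six wave equations of Lemma \ref{lemma:H-measure_wave_equations}, and finally to identify the masses by an elementary computation with the $\chi_{k,\eta}$. Fix the common subsequence along which all $\mu\supdelta_i(E_j,E_k;\bullet\otimes\bullet)$ exist for every $\delta\ge0$ (a countable dense set of parameters together with Lemma \ref{lemma:H_measure_non-dense_set}); by bilinearity the $\mu\supdelta_i(v,w;\bullet\otimes\bullet)$ then exist for all $v,w\in\R^3$ and are the H-measures of $v\cdot\nabla u\supdelta_{i,\eta}-v\cdot\nabla u_i$ against $w\cdot\nabla u\supdelta_{i,\eta}-w\cdot\nabla u_i$. \emph{Step 1 (gradient structure).} For each $\delta\ge0$ and each $i$, the sequence $\nabla u\supdelta_{i,\eta}-\nabla u_i$ is curl-free. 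The localization principle therefore forces its matrix-valued H-measure $\big(\mu\supdelta_i(E_j,E_k;\bullet\otimes\bullet)\big)_{j,k}$ to have the rank-one-in-$\xi$ form \eqref{lem:structmeas:gradientmeas3D}, with a single non-negative scalar measure $\sigma\supdelta_i:=\sum_k\mu\supdelta_i(E_k,E_k;\bullet\otimes\bullet)$ on $\Omega\times\Sph^2$ (consistent since $|\xi|^2=1$ there); non-negativity of $\sigma\supdelta_i$ is inherited from that of the diagonal H-measures.

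\emph{Step 2 (support and disintegration).} Each of the two wave equations in Lemma \ref{lemma:H-measure_wave_equations} constraining $u_i$ has the form $\partial_a\partial_b u\supdelta_{i,\eta}=\Div h\supdelta_{m,\eta}$ with $a,b\in N$ and $h\supdelta_{m,\eta}\to0$ in $L^2_{loc}$ (for every $\delta\ge0$); letting $\eta\to0$ gives $\partial_a\partial_b u_i=0$, whence $\partial_a(b\cdot\nabla u\supdelta_{i,\eta}-b\cdot\nabla u_i)\to0$ in $W^{-1,2}_{loc}$. By the localization principle, $\mu\supdelta_i(b,b;\bullet\otimes\bullet)=\sigma\supdelta_i\big((b\cdot\xi)^2\,\bullet\otimes\bullet\big)$ is supported in $\{a\cdot\xi=0\}$; since $(b\cdot\xi)^2\ge0$, this means $\supp\sigma\supdelta_i\subset\Omega\times\{(a\cdot\xi)(b\cdot\xi)=0\}$. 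Intersecting the two such conditions attached to $u_i$, a short computation with \eqref{def:normals} shows that the only surviving directions on $\Sph^2$ are $\pm\nu$ for $\nu\in N_{i-1}\cup N_{i+1}$. Since $\sigma\supdelta_i$ is invariant under $\xi\mapsto-\xi$ (real generating sequences), it disintegrates as $\sigma\supdelta_i=\sum_{\nu\in N_{i-1}\cup N_{i+1}}\delta_{[\nu]}\otimes\lambda\supdelta_{i,\nu}$ for finite non-negative measures $\lambda\supdelta_{i,\nu}$ on $\Omega$, with $\delta_{[\nu]}$ as in \eqref{def:dirac_on_projective}.

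\emph{Step 3 (masses and $L^\infty$ densities, $\delta=0$).} By \eqref{relation_Du_chi} and \eqref{relation_Du_theta}, $\partial_iu_{i,\eta}-\partial_iu_i=-3(\chi_{i,\eta}-\theta_i)-(\chi_{0,\eta}-\theta_0)+o_{L^2}(\eta)$; squaring and using $\chi_{k,\eta}^2=\chi_{k,\eta}$, $\chi_{i,\eta}\chi_{0,\eta}=0$, and $\chi_{k,\eta}\overset{\ast}{\warr}\theta_k$, the defect measure of $|\partial_iu_{i,\eta}-\partial_iu_i|^2$ equals $g_i\,\Leb^3$ with $g_i:=9\theta_i(1-\theta_i)-6\theta_i\theta_0+\theta_0(1-\theta_0)\in L^\infty(\Omega)$. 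On the other hand this defect measure is $\mu^{(0)}_i(E_i,E_i;\bullet\otimes1)=\sigma^{(0)}_i\big((E_i\cdot\xi)^2\,\bullet\otimes1\big)=\sum_{\nu}(E_i\cdot\nu)^2\lambda^{(0)}_{i,\nu}$, and $(E_i\cdot\nu)^2=\tfrac12$ for every $\nu\in N_{i-1}\cup N_{i+1}$ by \eqref{def:normals}. Hence $\tfrac12\sum_{\nu}\lambda^{(0)}_{i,\nu}=g_i\,\Leb^3$: each $\lambda^{(0)}_{i,\nu}$ is absolutely continuous with density $A^{(0)}_{[\nu]}$ obeying $0\le A^{(0)}_{[\nu]}\le 2g_i$, and $\sum_{\nu\in N_{i-1}\cup N_{i+1}}A^{(0)}_{[\nu]}=2g_i$, which is \eqref{sum_A_old}; substituting into the disintegration gives \eqref{3D:lem:decompostion_of_gradient_measure} for $\delta=0$. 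That this density of $\sigma^{(0)}_i$ at $[\nu]$, $\nu\in N_j$, is the same for both admissible $i\in\{j-1,j+1\}$ — so that $A^{(0)}_{[\nu]}$ is unambiguously labelled by $\nu\in N$ — follows from $e(u_\eta)_{ik}\to0$ in $L^2_{loc}$ for $i\ne k$ (the wells being diagonal), which couples the H-measures of the components of $Du_\eta$ and equates the two candidate densities.

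\emph{Step 4 ($\delta>0$ and the inequality \eqref{sigma_delta_leq_sigma}).} As $u\supdelta_{i,\eta}=\varphi_{\delta\eta^{1/3}}\ast u_{i,\eta}$ and $\varphi_{\delta\eta^{1/3}}\ast\nabla u_i\to\nabla u_i$ in $L^2_{loc}$, the H-measure of $\nabla u\supdelta_{i,\eta}-\nabla u_i$ coincides with that of $\varphi_{\delta\eta^{1/3}}\ast(\nabla u_{i,\eta}-\nabla u_i)$. Using that a fixed smooth cutoff asymptotically commutes with $\varphi_{\delta\eta^{1/3}}\ast$ in $L^2$, and that $|\hat\varphi(\delta\eta^{1/3}\xi)|\le\hat\varphi(0)=1$ since $\varphi\ge0$ has unit mass, one gets $\sigma\supdelta_i(\psi^2\otimes a)\le\sigma^{(0)}_i(\psi^2\otimes a)$ for all non-negative $\psi\in C_c(\Omega)$, $a\in C(\Sph^2)$, i.e. \eqref{sigma_delta_leq_sigma}. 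With Step 2 this forces $\lambda\supdelta_{i,\nu}\le\lambda^{(0)}_{i,\nu}=A^{(0)}_{[\nu]}\Leb^3$, so $\lambda\supdelta_{i,\nu}$ has an $L^\infty$ density $A\supdelta_{[\nu]}$ with $0\le A\supdelta_{[\nu]}\le A^{(0)}_{[\nu]}$; hence \eqref{3D:lem:decompostion_of_gradient_measure} holds for all $\delta\ge0$. \emph{Main obstacle.} Once the localization principle is in hand, the only genuinely non-formal point is the $L^\infty$-density claim — H-measures need not be absolutely continuous in $x$ — which is salvaged only by the explicit diagonal computation of Step 3 together with the identity $(E_i\cdot\nu)^2=\tfrac12$ on $\supp\sigma^{(0)}_i$; hence Step 3 must precede any $L^\infty$ statement, and the well-definedness of $A\supdelta_{[\nu]}$ across the two admissible indices rests on the off-diagonal relation $e(u_\eta)_{ik}\to0$.
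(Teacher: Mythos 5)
Your proposal is correct and follows essentially the same route as the paper: the curl-free localization giving the rank-one-in-$\xi$ form with $\sigma_i\supdelta=\sum_m\mu_i\supdelta(E_m,E_m;\bullet\otimes\bullet)$, the two wave equations of Lemma \ref{lemma:H-measure_wave_equations} restricting the support to $\pm N_{i+1}\cup\pm N_{i-1}$ and (with reflection symmetry) yielding the Dirac decomposition, the explicit limit of $\left(\partial_i u_{i,\eta}-\partial_i u_i\right)^2$ via \eqref{relation_Du_chi} giving \eqref{sum_A_old} and the $L^\infty$ densities, and $\left|\F\varphi_{\delta\eta^{1/3}}\right|\leq 1$ giving \eqref{sigma_delta_leq_sigma}. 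The only substantive deviation is how you make the label $A\supdelta_{[\nu]}$ well-defined across the two admissible indices: you use the strong $L^2$ vanishing of the off-diagonal strain components, so that $\mu_{i+1}\supdelta(E_{i-1},E_{i-1};\bullet\otimes\bullet)=\mu_{i-1}\supdelta(E_{i+1},E_{i+1};\bullet\otimes\bullet)$ and $\xi_{i-1}^2=\xi_{i+1}^2=\tfrac12$ at $\pm\nu$ for $\nu\in N_i$, whereas the paper's Step 3 uses the trace-free relation $\sum_j\partial_j u_{j,\eta}\supdelta\to 0$ together with the support information to discard the third diagonal term; both arguments are valid (yours applies verbatim for all $\delta\geq 0$ since convolution preserves the strong convergence) and give the same identification.
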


So far we only proved that oscillations are restricted to the six twinning directions.
However, in order to argue that the microstructures locally are twins, we have to make sure that at almost all points in space there is oscillation in at most one direction.
This is a consequence of the rigidity result by Capella and Otto \cite{CO12} and the behavior of the energy under rescaling:
Setting
\begin{align}\label{h-measure:rescaling}
	r \hat x = x\text{, }\hat u ( \hat x) =  r u (x)\text{, } \hat \chi (\hat x)   = \chi(x)\text{, } r \hat \eta  = \eta
\end{align}
we obtain
\[ E_{\hat \eta}(\hat u, \hat \chi) = r^{-3 + \frac{2}{3}}  E_\eta(u,\chi),\numberthis \label{h-measure:energy_rescaling}\]
which very naturally leads to the expected fractal dimension $3-2/3$ of the set of macroscopic interfaces.
The same argument was used in the companion paper \cite{simon2017rigidity} to establish the limiting non-convex differential inclusion.

% The lemma is somewhat reminiscent of partial regularity results in the theory of nonlinear partial differential equations with the noteable difference that here we have no information about the topological properties of the exceptional set.

\begin{prop}\label{prop:disjoint_supports}
 For $\nu, \tilde \nu \in N$ with $\nu \neq \tilde \nu$ we have
\begin{align}\label{brot}
  \left(A^{(0)}_{[\nu]} A^{(0)}_{[\tilde \nu]}\right)(y) & = 0
\end{align}
 in the sense of Lebesgue points for all $y \in \Omega\setminus S$, where the set 
 \[S := \left\{y \in \Omega:\limsup_{r\to 0} r^{-3 + \frac{2}{3}} (E_{elast} + E_{inter})\left(\overline{\ball{y}{r}}\right) >0\right\}\] satisfies 
\[\operatorname{dim}_H S \leq 3-\frac{2}{3}.\]
 Furthermore we have
 \[\theta_0 \in \{0,1\}\]
 almost everywhere.
\end{prop}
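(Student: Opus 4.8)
The plan is to prove the three assertions in sequence, all of them flowing from the energy rescaling identity \eqref{h-measure:energy_rescaling} combined with the Capella--Otto rigidity theorem applied after blow-up.

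\emph{Step 1: the set $S$ is small.} The functional $E_{elast}+E_{inter}$ is a finite Radon measure on $\Omega$, hence for every $\alpha > 0$ the set
\[S_\alpha := \left\{y\in\Omega : \limsup_{r\to 0} r^{-3+\frac{2}{3}}(E_{elast}+E_{inter})(\overline{\ball{y}{r}}) > \alpha\right\}\]
has finite $\Hd^{3-2/3}$-measure by a standard Vitali covering argument comparing the density of a Radon measure against Hausdorff measure (this is the usual density-estimate lemma, e.g.\ in Mattila or Evans--Gariepy). Since $S = \bigcup_{n} S_{1/n}$, it follows that $\Hd^{3-2/3}(S) = 0$ in the relevant sense, hence $\dim_H S \leq 3-\frac{2}{3}$. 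No new idea is needed here; it is purely measure-theoretic.

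\emph{Step 2: blow-up off $S$ and application of Capella--Otto.} Fix a Lebesgue point $y\in\Omega\setminus S$ of the relevant densities and also a point where $\theta_0$ is approximately continuous. By definition of $S$, along a sequence $r\to 0$ we have $r^{-3+2/3}(E_{elast}+E_{inter})(\overline{\ball{y}{r}})\to 0$. I would then perform the rescaling \eqref{h-measure:rescaling} centered at $y$: on the unit ball the rescaled sequence $(\hat u_{\hat\eta},\hat\chi_{\hat\eta})$ has energy $r^{-3+2/3}E_\eta(u,\chi)$ restricted to $\ball{y}{r}$, which tends to zero. After a diagonal extraction over $\hat\eta\to 0$ and $r\to 0$ one obtains a finite-energy sequence whose energy vanishes in the limit, i.e.\ an exact stress-free microstructure on $\ball{0}{1}$; by Dolzmann--M\"uller \cite{DM95} (or the quantitative Capella--Otto estimate \cite{CO12} in the limit) this forces the limiting strain to be a single twin, i.e.\ oscillation in at most one of the six directions $\nu\in N$, and moreover forbids austenite. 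The key point making the rescaling work is exactly the exponent matching in \eqref{h-measure:energy_rescaling}: the energy scales by $r^{-3+2/3}$, which is precisely the density threshold defining $S$.

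\emph{Step 3: passing the blow-up conclusion back to the H-measure densities.} The delicate part is relating the ``single-twin'' conclusion about the blow-up limit to the pointwise product condition \eqref{brot} on the original densities $A^{(0)}_{[\nu]}$. Here I would use that H-measures localize and that $A^{(0)}_{[\nu]}(y)$ is, up to the structure of Lemma \ref{lem:structmeas3D}, the Lebesgue density at $y$ of the mass carried by $\sigma_i^{(0)}$ in direction $[\nu]$; a nonzero product $A^{(0)}_{[\nu]}(y)A^{(0)}_{[\tilde\nu]}(y)$ would survive the blow-up and produce two distinct oscillation directions in the limiting microstructure, contradicting Step 2. One must be careful that H-measures do not in general localize to a point, so the argument should be run at the level of the rescaled sequences: the rescaled H-measures converge to the H-measure of the blow-up limit sequence, and the blow-up limit being a single twin kills all but one direction. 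Similarly, the fact that $\theta_0(y)\in\{0,1\}$ for a.e.\ $y\notin S$ follows because the blow-up limit contains no austenite unless $\theta_0(y)$ was already $0$ or $1$; since $S$ is Lebesgue-null, $\theta_0\in\{0,1\}$ a.e.\ on all of $\Omega$.

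\emph{Main obstacle.} I expect Step 3 to be the crux: the passage from a qualitative statement about blow-up limits (``the limiting microstructure is a twin'') to a \emph{quantitative} pointwise-a.e.\ statement about the densities $A^{(0)}_{[\nu]}$ requires care with the compatibility of the H-measure under the parabolic-type rescaling \eqref{h-measure:rescaling} (note the $\eta$ also rescales, so the regularization scale $\delta\eta^{1/3}$ behaves correctly) and with the fact that Lebesgue points of $A^{(0)}_{[\nu]}$, of $\theta_0$, and points outside $S$ can be chosen simultaneously for a.e.\ $y$. The measure-theoretic bookkeeping of which full-measure set survives all these restrictions, and verifying that the blow-up of a finite-energy sequence is again admissible for the Capella--Otto/Dolzmann--M\"uller rigidity result, is where the real work lies; the scaling heuristic itself is immediate from \eqref{h-measure:energy_rescaling}.
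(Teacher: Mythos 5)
Your overall strategy shares the paper's key ingredients (the scaling identity \eqref{h-measure:energy_rescaling}, the Capella--Otto rigidity estimate, a covering argument for $\dim_H S$, and Lebesgue points), and your Step 1 is exactly the paper's argument. But the crux that you yourself flag in Step 3 --- passing from rigidity information under rescaling to the pointwise statement $A^{(0)}_{[\nu]}A^{(0)}_{[\tilde\nu]}(y)=0$ --- is not actually carried out, and the route you sketch for it has concrete problems. First, the blow-up limit of a vanishing-energy sequence is \emph{not} an exact stress-free microstructure to which Dolzmann--M\"uller \cite{DM95} applies: since only the weak limits $\theta_i$ of $\chi_{i,\eta}$ survive, the limiting strain has the form $\sum_i\theta_ie_i$ with fractional volume fractions (a laminate limit is the generic case), so the qualitative three-well rigidity result says nothing about it, and the limit being "a single twin" cannot be read off from it. Second, the assertion that "the rescaled H-measures converge to the H-measure of the blow-up limit sequence" under a diagonal extraction in $(\hat\eta,r)$ is unsubstantiated and is precisely the delicate point: an H-measure is attached to a specific sequence, it records oscillations that disappear in the weak limit, and even granting some convergence you would still need a quantitative link between that limiting object and the Lebesgue value $A^{(0)}_{[\nu]}(y)$ of the fixed densities from Lemma \ref{lem:structmeas3D}. (Also, your claim that the blow-up "forbids austenite" is wrong as stated; pure austenite is an admissible local limit, which is exactly why the conclusion is only $\theta_0\in\{0,1\}$.)

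The paper avoids all of this by never forming a blow-up limit. It applies the \emph{quantitative} Capella--Otto estimate at each fixed scale $r$ and fixed $\eta$, obtaining competitors $f_{\nu,\eta}$ depending only on $x\cdot\nu$ with an error controlled by $\bigl(r^{-3+\frac{2}{3}}E_\eta(\ball{0}{r})\bigr)^{\frac14}+r^{-3+\frac{2}{3}}E_\eta(\ball{0}{r})$ after rescaling back. Since $f_{\nu,\eta}-f_\nu$ is a function of $x\cdot\nu$ alone, every H-measure term involving it is supported on $\{\pm\nu\}$, so testing $\sigma_{i+1}$ with a cut-off $\psi$ and a symbol $a_{[\nu]}$ vanishing at the other normals bounds $\dashint_{\ball{0}{sr/2}}A^{(0)}_{[\tilde\nu]}$ for every $\tilde\nu\neq\nu$ (and, via the crude austenite alternative, all of them in that case) by the scaled energy; only then does one send $r\to0$, using upper semicontinuity of the energy measures on closed balls and the Lebesgue-point property of the bounded functions $A^{(0)}_{[\nu]}$, to conclude \eqref{brot} off $S$. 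The statement $\theta_0\in\{0,1\}$ is obtained the same way from the $L^1$-form of the Capella--Otto estimate for $\chi_{0,\eta}$, not from a blow-up limit. So the missing idea in your proposal is exactly this quantitative transfer at fixed scale from the Capella--Otto error to the H-measure mass in non-twinning directions; as written, your Step 3 is a gap rather than a proof.
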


As an easy consequence of this proposition, we can refine the statement of Lemma \ref{lem:structmeas3D}.

\begin{cor}\label{cor:diff_incl}
 For each $\nu \in N_i$  with $i\in \{1,2,3\}$ there exist $\chi_{[\nu]} : \Omega \to \{0,1\}$ measurable such that
 \begin{align}
 	\chi_{[\nu]}\chi_{[\tilde \nu]} & \equiv 0 \text{ for } \tilde \nu \in N\setminus \{\nu\}, \label{disjoint_supports}\\
 	A^{(0)}_{[\nu]} & \equiv 18\, \theta_i(1-\theta_i)\chi_{[\nu]},\label{relation_A_chi}\\
 	\sum_{\nu \in N_{i+1} \cup N_{i-1}} \chi_{[\nu]} & \equiv \chi_{\{\theta_i \neq 0,1\}} \chi_{\{\theta_0 = 0\}}. \label{support_sum_chi}
 \end{align}
\end{cor}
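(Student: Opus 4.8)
The plan is to read off all three assertions of the corollary from the two outputs of Proposition~\ref{prop:disjoint_supports}---the essential disjointness \eqref{brot} of the supports of the amplitudes $A^{(0)}_{[\nu]}$ and the dichotomy $\theta_0 \in \{0,1\}$ almost everywhere---combined with the mass formula \eqref{sum_A_old} of Lemma~\ref{lem:structmeas3D}.

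First I would construct the functions $\chi_{[\nu]}$. Each $A^{(0)}_{[\nu]}$ lies in $L^\infty(\Omega)$ and is non-negative, so $\Leb^3$-almost every point of $\Omega$ is a Lebesgue point of it and the set $F_\nu := \{y : y \text{ is a Lebesgue point of } A^{(0)}_{[\nu]} \text{ with } A^{(0)}_{[\nu]}(y) > 0 \}$ is determined up to a $\Leb^3$-null set; I put $\chi_{[\nu]} := \chara_{F_\nu}$. Since $\operatorname{dim}_H S \le 3 - \frac{2}{3} < 3$ forces $\Leb^3(S) = 0$, equation \eqref{brot} says the $F_\nu$ are pairwise disjoint outside a null set, so after correcting the $\chi_{[\nu]}$ on that set I obtain genuine $\{0,1\}$-valued measurable functions with pairwise disjoint supports, which is \eqref{disjoint_supports}.

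Next I would substitute $\theta_0 \in \{0,1\}$ into \eqref{sum_A_old}: on $\{\theta_0 = 1\}$ one has $\theta_1 = \theta_2 = \theta_3 = 0$ by $\sum_{j=0}^3 \theta_j = 1$, so the right-hand side of \eqref{sum_A_old} vanishes there, while on $\{\theta_0 = 0\}$ the two $\theta_0$-terms drop out. Hence
\[
  \sum_{\nu \in N_{i-1}\cup N_{i+1}} A^{(0)}_{[\nu]} = 18\,\theta_i(1-\theta_i)\,\chara_{\{\theta_0 = 0\}}
\]
almost everywhere, for $i = 1,2,3$. The decisive step is that by \eqref{brot} at $\Leb^3$-a.e.\ point at most one of the summands $A^{(0)}_{[\nu]}$, $\nu \in N_{i-1}\cup N_{i+1}$, is non-zero, so the left-hand side equals that single surviving amplitude; it is therefore positive exactly on $\{0 < \theta_i < 1\} \cap \{\theta_0 = 0\}$, where it equals $18\,\theta_i(1-\theta_i)$, and zero elsewhere. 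Reading this as an equality of characteristic functions yields \eqref{support_sum_chi}; and on $\{\chi_{[\nu]} = 1\}$, where all other summands vanish and (the sum being positive there) necessarily $\theta_0 = 0$, the same line gives $A^{(0)}_{[\nu]} = 18\,\theta_i(1-\theta_i)$, whereas on $\{\chi_{[\nu]} = 0\}$ both sides of \eqref{relation_A_chi} vanish by construction.

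Beyond this everything is bookkeeping. The only substantial inputs---disjointness \eqref{brot} and $\theta_0 \in \{0,1\}$---are imported from Proposition~\ref{prop:disjoint_supports}, so I expect the main (and minor) obstacle to be the cyclic index matching between a surviving $A^{(0)}_{[\nu]}$ and the volume fraction $\theta_i$ it should equal: a given $\nu \in N_k$ appears in the sum over $N_{i-1}\cup N_{i+1}$ for both $i = k-1$ and $i = k+1$, and one must check that the two resulting expressions for $A^{(0)}_{[\nu]}$ agree on $\{\chi_{[\nu]} = 1\}$, which holds because $\theta_{k-1} + \theta_{k+1} = 1$ there.
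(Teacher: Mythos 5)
Your argument is correct and is essentially the paper's own proof: you take $\chi_{[\nu]}$ to be the indicator of $\{A^{(0)}_{[\nu]}>0\}$, get \eqref{disjoint_supports} from \eqref{brot} (using $\Leb^3(S)=0$), and use $\theta_0\in\{0,1\}$ together with $\sum_j\theta_j=1$ to reduce \eqref{sum_A_old} to $\sum_{\nu\in N_{i+1}\cup N_{i-1}}A^{(0)}_{[\nu]}=18\,\theta_i(1-\theta_i)$, from which \eqref{relation_A_chi} and \eqref{support_sum_chi} follow by the a.e.\ disjointness of the summands. Your closing remark on the cyclic index matching is also consistent with how the identity is read and used later (the two admissible indices give the same value on $\{\chi_{[\nu]}=1\}$, where indeed $\theta_0=\theta_k=0$), so it poses no gap.
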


\subsection[The transport property and accuracy of the approximation]{The transport property and accuracy of the approximation\sectionmark{The transport property}}
\sectionmark{The transport property}
\label{subsec:H_measure_transport}

In its simplest form, the transport property states the following:
Let $u_n : \Omega \to \R$ be such that $v_{1,n}:= u_n \warr 0$ and $v_{2,n}:=\partial_1 u_n \warr  0$ in $L^2$ and let $\mu(i,j; \bullet \otimes \bullet)$ be the associated H-measures for $i,j \in\{1,2\}$.
Then we have
\[ \mu(1,1;\partial_l |\psi|^2\otimes a) = -2 \mu(1,2;|\psi|^2\otimes a).\]
In particular, we see that we need control of a derivative of the sequence.
However, not having a derivative to spend since we are already considering the gradient forces us to regularize the sequence:
Recall that we set $u^{(\delta)}_{\eta}= \varphi_{\delta\eta^{\frac{1}{3}}}\ast u_\eta$ for $\delta>0$ in Subsection \ref{subsec:H-measure_existence}.

As a result, we have to investigate how well the regularized H-measures represent the microstructure, which will boil down to how much mass they retain by inequality \eqref{sigma_delta_leq_sigma} and the fact that there can locally only be at most one direction of oscillation.
The proof straightforwardly uses the interfacial energy to control the difference between the sequence $\chi_\eta$ and its convolution.

\begin{lemma}\label{lemma:mass_in_approx_converges}
 There exist non-negative measurable functions $\tau_i\supdelta$ on $\Omega$ for $i=1,2,3$ such that
 \begin{align}\label{delta_weight}
  \sigma_i\supdelta & = \tau_i\supdelta \left(\sum_{\nu \in N_{i+1} \cup N_{i-1}} \chi_{[\nu]}  \delta_{[\nu]} \right)\Leb^3
 \end{align}
and
\[18\,\theta_i(1-\theta_i)-36\delta\frac{DE_{inter}}{D\Leb^3}\leq \tau_i\supdelta\leq 18\,\theta_i(1-\theta_i) \text{ } \numberthis\label{bounds_tau}\]
in $\Leb^3$-almost all points.
\end{lemma}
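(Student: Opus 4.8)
The plan is to combine the structure of the regularized H-measures from Lemma \ref{lem:structmeas3D} with a direct comparison between $\chi_{i,\eta}$ and its mollification $\varphi_{\delta\eta^{1/3}}\ast\chi_{i,\eta}$ at the level of generating sequences. First I would recall that $\sigma_i^{(\delta)}$ is supported on the directions $N_{i+1}\cup N_{i-1}$ by \eqref{3D:lem:decompostion_of_gradient_measure}, and that by Proposition \ref{prop:disjoint_supports} and Corollary \ref{cor:diff_incl} the directional weights $A_{[\nu]}^{(\delta)}$ have mutually disjoint supports (up to a null set), which after dividing out $\chi_{[\nu]}$ should produce a single scalar density $\tau_i^{(\delta)}$ with \eqref{delta_weight}. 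Concretely, I expect $A_{[\nu]}^{(\delta)} = \tau_i^{(\delta)}\chi_{[\nu]}$ with $\tau_i^{(\delta)}$ the ``mass retained after convolution''; the fact that $\tau_i^{(\delta)}$ is the \emph{same} function for every $\nu\in N_{i+1}\cup N_{i-1}$ should follow because at a.e.\ point at most one $\chi_{[\nu]}$ is nonzero, so there is genuinely only one number to identify, and one reads it off by testing $\sigma_i^{(\delta)}$ against $\psi\otimes 1$ and using \eqref{support_sum_chi}.

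Next I would establish the \textbf{upper bound} $\tau_i^{(\delta)}\le 18\,\theta_i(1-\theta_i)$. Since $\theta_0\in\{0,1\}$ a.e., formula \eqref{sum_A_old} reduces on $\{\theta_0=0\}$ to $\sum_\nu A_{[\nu]}^{(0)} = 18\,\theta_i(1-\theta_i)$, i.e.\ the total mass of $\sigma_i^{(0)}$ over the sphere has density $18\,\theta_i(1-\theta_i)$. The monotonicity \eqref{sigma_delta_leq_sigma}, $\sigma_i^{(\delta)}\le\sigma_i=\sigma_i^{(0)}$, then gives the total mass of $\sigma_i^{(\delta)}$ is $\le 18\,\theta_i(1-\theta_i)$ as a density, and combining with \eqref{delta_weight} and \eqref{support_sum_chi} yields $\tau_i^{(\delta)}\le 18\,\theta_i(1-\theta_i)$ pointwise a.e. (On $\{\theta_0=1\}$ there is no twinning and the statement is vacuous.)

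For the \textbf{lower bound} $\tau_i^{(\delta)}\ge 18\,\theta_i(1-\theta_i)-36\delta\,DE_{inter}/D\Leb^3$, the idea is to quantify how much mass is lost by mollifying at scale $\delta\eta^{1/3}$. One writes the H-measure $\mu_i^{(\delta)}$ in terms of derivatives of $u_{i,\eta}^{(\delta)}=\varphi_{\delta\eta^{1/3}}\ast u_{i,\eta}$; modulo the elastic error controlled by \eqref{3D:error_constraint} (which vanishes in $L^2_{loc}$), the relevant scalar quantities behave like $\varphi_{\delta\eta^{1/3}}\ast\chi_{i,\eta}$ via \eqref{relation_Du_chi}. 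The mass $\tau_i^{(\delta)}$ is, up to the disjoint-support bookkeeping, the weak limit of $|\varphi_{\delta\eta^{1/3}}\ast\chi_{i,\eta}|^2$ minus the square of the weak limit $\theta_i$ of $\chi_{i,\eta}$ — that is, the ``variance'' of the mollified sequence — whereas $18\,\theta_i(1-\theta_i)$ (with the $18$ coming from the $(-3)^2=9$ in $\partial_i u_i = -3\chi_i+\dots$ times the symmetric-gradient factor, or rather from \eqref{sum_A_old} directly) is the corresponding variance for the un-mollified sequence. So the loss is $\mathrm{wlim}\,(|\chi_{i,\eta}|^2 - |\varphi_{\delta\eta^{1/3}}\ast\chi_{i,\eta}|^2)$, which by Jensen/convexity is nonnegative, and which I would bound by $\mathrm{wlim}\,|\chi_{i,\eta}-\varphi_{\delta\eta^{1/3}}\ast\chi_{i,\eta}|$ up to a bounded factor (since $\chi_{i,\eta}\in[0,1]$, the $L^2$ and $L^1$ differences are comparable and the cross terms telescope). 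The standard estimate $\|\chi - \varphi_r\ast\chi\|_{L^1(U)}\le r\,|D\chi|(U_r)$ for BV functions then gives, with $r=\delta\eta^{1/3}$, a bound of the form $\delta\eta^{1/3}|D\chi_{i,\eta}|(U_{\delta\eta^{1/3}})$; recalling $E_{inter,\eta}=\eta^{1/3}\sum_i|D\chi_{i,\eta}|$, this is $\le\delta\,E_{inter,\eta}$, and passing to the weak-$*$ limit and localizing produces the density $\delta\,DE_{inter}/D\Leb^3$. Tracking the numerical constant through \eqref{relation_Du_chi} (a factor $9$ from $(-3)^2$ and then distributing over the possibly two normals in $N_{i\pm1}$, or more simply matching the $18$ in \eqref{sum_A_old}) is what turns $\delta$ into $36\delta$.

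The main obstacle I anticipate is the lower bound, specifically making rigorous the identification of $\tau_i^{(\delta)}$ (and of $18\,\theta_i(1-\theta_i)$) as variances of the relevant scalar sequences: one must pass from the matrix-valued H-measures $\mu_k^{(\delta)}(v,w;\cdot)$ through the wave-equation structure of Lemma \ref{lemma:H-measure_wave_equations} and the decomposition \eqref{lem:structmeas:gradientmeas3D}–\eqref{3D:lem:decompostion_of_gradient_measure} to a clean statement purely about $\chi_{i,\eta}$ and its mollification, carrying the elastic error \eqref{3D:error_constraint} along, and then invoke lower semicontinuity to compare the limit of the mollified variance with the variance minus the BV defect. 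Handling the $U_{\delta\eta^{1/3}}\to U$ thickening uniformly as $\eta\to0$ (so that the extra collar contributes nothing in the limit) and keeping all inequalities localized against arbitrary $\psi\in C_c(\Omega)$ is the technical heart; everything else is bookkeeping with the cyclic indices and the disjoint supports from Corollary \ref{cor:diff_incl}.
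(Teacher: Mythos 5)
Your proposal follows essentially the same route as the paper: the existence of $\tau_i^{(\delta)}$ and the upper bound come from $\sigma_i^{(\delta)}\le\sigma_i$ combined with the disjoint supports of Corollary \ref{cor:diff_incl}, and the lower bound is obtained exactly as in the paper by testing $\sigma_i^{(\delta)}$ against $|\psi|^2\otimes 1$, identifying the result via \eqref{representation_gradient_h_measure} and \eqref{relation_Du_chi} as $18$ times the limit of $\left(\varphi_{\delta\eta^{1/3}}\ast\chi_{i,\eta}\right)^2-\theta_i^2$, bounding the loss relative to $18\,\theta_i(1-\theta_i)$ by the $L^1$ mollification defect, and then using the localized BV estimate $\|\chi-\varphi_r\ast\chi\|_{L^1}\lesssim r\,|D\chi|$ together with a differentiation theorem for Radon measures to pass to densities. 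The only detail the paper makes explicit that you gloss over is that the austenite contribution $\chi_{0,\eta}-\theta_0$ in \eqref{relation_Du_chi} drops out in $L^2$ because $\theta_0\in\{0,1\}$ almost everywhere by Proposition \ref{prop:disjoint_supports}.
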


Next, we come to the transport property itself.
It controls how on a twin the mass of the H-measures changes in directions normal to the direction of lamination.
By equation \eqref{sum_A_old} this also restricts the volume fractions the behavior of the volume fractions.
Note that the transport property takes the form of a differential inequality associated to the ill-posed ODE
\[f'= C f^\frac{1}{2}\]
for $f\geq 0$ and $C>0$, which we will later exploit in Lemma \ref{lemma:differential_inequality}.

\begin{prop}\label{prop:transport_property}
There exists a universal constant $C>0$ with the following property:
For each $\delta >0$, $i=1,2,3$ and $\nu \in N_{i+1} \cup N_{i-1}$ let $d \in \Sph^2$ with $d\cdot \nu =0$.
Then we have $\partial_d\left(\tau_i\supdelta \chi_{[\nu]}\right) \in L^2(\Omega)$ with the estimate
 \begin{align}
  \left|\partial_d\left(\tau_i\supdelta \chi_{[\nu]}\right)\right| & \leq C \frac{1}{\delta}\left(\tau_i\supdelta \chi_{[\nu]}\right)^{\frac{1}{2}}\left(\frac{DE_{elast}}{D\Leb^3}\right)^{\frac{1}{2}}.
    \label{3D_def_meas_mixed}
 \end{align}
\end{prop}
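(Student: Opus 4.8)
The plan is to combine the transport property of H-measures with the structure decomposition of Lemmas \ref{lem:structmeas3D} and \ref{lemma:mass_in_approx_converges}. First I would fix $\delta > 0$, an index $i$, a twinning normal $\nu \in N_{i+1} \cup N_{i-1}$, and a direction $d \in \Sph^2$ with $d\cdot \nu = 0$; by density and linearity it suffices to estimate $\mu_i^{(\delta)}(v,v; \partial_d \psi^2 \otimes 1)$ for real test functions, after first integrating out the sphere variable against $a \equiv 1$, because the only direction appearing on the support of $\sigma_i^{(\delta)}$ is $[\nu]$ by \eqref{3D:lem:decompostion_of_gradient_measure}, so $(v\cdot\xi)^2$ is constant $\mathcal{H}^2$-a.e.\ on that support and \eqref{lem:structmeas:gradientmeas3D} lets me convert back and forth between $\mu_i^{(\delta)}$ and $\sigma_i^{(\delta)}$, hence by \eqref{delta_weight} to $\tau_i^{(\delta)}\chi_{[\nu]}\Leb^3$. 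The key point is to choose $v$ appropriately: picking $v = d$ (or a vector spanning together with $\nu$ the plane relevant to one of the wave equations of Lemma \ref{lemma:H-measure_wave_equations}) kills the symbol $(v\cdot\xi)^2 = (v\cdot\nu)^2$ unless I choose $v$ with a nonzero component along $\nu$; so I would instead take $v$ to be $\nu$ itself, so that the left side $\mu_i^{(\delta)}(\nu,\nu;\cdot)$ reconstructs exactly $|\nu|^2 \tau_i^{(\delta)}\chi_{[\nu]}$ up to a fixed constant, while the differentiated direction $d$ is the transport direction.

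Next I would invoke the transport property of H-measures in the form recalled just before the statement: since $u_{\eta}^{(\delta)} \warr u$ in $W^{1,2}_{loc}$, the sequences $\partial_v u_{i,\eta}^{(\delta)} - \partial_v u_i$ converge weakly to zero in $L^2_{loc}$, and the localized wave equations of Lemma \ref{lemma:H-measure_wave_equations} supply the extra differentiated control needed to run the transport identity. Concretely, differentiating the relevant wave equation $\partial_{a}\partial_{b} u_{i,\eta}^{(\delta)} = \Div h_{\ell,\eta}^{(\delta)}$ in the transport direction $d$ — chosen orthogonal to $\nu$, hence lying in the plane of the two characteristic directions $a,b$ associated with $\nu$ — and using that $h_{\ell,\eta}^{(\delta)}$ is itself a convolution $\varphi_{\delta\eta^{1/3}}\ast h_{\ell,\eta}^{(0)}$ with $h_{\ell,\eta}^{(0)} \to 0$ in $L^2_{loc}$, I get that the "derivative of the H-measure in direction $d$" equals a pairing of the generating sequence against (the $d$-derivative of) $h_{\ell,\eta}^{(\delta)}$. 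The factor $\delta^{-1}$ arises precisely here: the $d$-derivative lands on the mollifier $\varphi_{\delta\eta^{1/3}}$, producing $\|\nabla\varphi_{\delta\eta^{1/3}}\|_{L^1} = (\delta\eta^{1/3})^{-1}\|\nabla\varphi\|_{L^1}$, but the matching gain $\eta^{1/3}$ (from $E_{inter,\eta} = \eta^{1/3}|D\chi|$, or from rescaling \eqref{h-measure:energy_rescaling}) cancels the $\eta^{1/3}$, leaving the clean $1/\delta$. One then estimates the resulting bilinear pairing by Cauchy–Schwarz: one factor is $\|\psi^2 \,(\text{sequence generating } \sigma_i^{(\delta)})\|_{L^2} \sim (\tau_i^{(\delta)}\chi_{[\nu]})^{1/2}$ after passing to the limit, and the other factor is $\|h^{(0)}_{\ell,\eta}\|_{L^2(\psi^2)} \leq c\,\eta^{1/3}(E_{elast,\eta}(\psi^2))^{1/2}$ by \eqref{3D:error_constraint}, which in the limit is $c'(DE_{elast}/D\Leb^3)^{1/2}$ — giving the two factors on the right-hand side of \eqref{3D_def_meas_mixed}. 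The $L^2$-membership of $\partial_d(\tau_i^{(\delta)}\chi_{[\nu]})$ is then read off from the resulting distributional bound, since the right-hand side is manifestly in $L^2_{loc}$ (product of an $L^\infty$ function and an $L^2$ function via $\tau_i^{(\delta)} \leq 18\theta_i(1-\theta_i) \leq 9/2$).

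The main obstacle, I expect, is bookkeeping the passage to the limit $\eta \to 0$ cleanly while keeping everything localized: the transport identity is a statement about the H-measure, i.e.\ already in the limit, but the error term $h_{\ell,\eta}^{(\delta)}$ and the energy density $E_{elast}$ must be tracked through weak-* convergence of measures, and one must be careful that the product $\psi^2$-localization interacts correctly with the convolution (the identity $h_{i,\eta}^{(\delta)} = \varphi_{\delta\eta^{1/3}}\ast h_{i,\eta}^{(0)}$ only holds away from a thin boundary layer, which is harmless for $U \subset\subset \Omega$ and small $h$ but needs a remark). A secondary subtlety is that the transport property gives control of $\partial_d \mu_i^{(\delta)}$ only after contracting the Fourier variable against $a\equiv 1$; but since $\sigma_i^{(\delta)}$ is supported on the single direction $[\nu]$ over the relevant region — here one uses Corollary \ref{cor:diff_incl}, \eqref{disjoint_supports}, that $\chi_{[\nu]}$ and $\chi_{[\tilde\nu]}$ have disjoint supports, so no cross terms from other directions pollute the identity — this loses no information, and the scalar function $\tau_i^{(\delta)}\chi_{[\nu]}$ is faithfully recovered. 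Everything else (choice of $v=\nu$, Cauchy–Schwarz, the $\delta^{-1}$ scaling) is routine once the transport identity is set up with the correct pair of characteristic directions from Lemma \ref{lemma:H-measure_wave_equations}.
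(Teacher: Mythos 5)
Your high-level architecture is the same as the paper's: use the transport property for the regularized measures, feed in the wave equations of Lemma \ref{lemma:H-measure_wave_equations}, let the extra derivative land on the mollifier so that $\|D\varphi_{\delta\eta^{1/3}}\|_{L^1}\sim(\delta\eta^{1/3})^{-1}$ combines with the bound \eqref{3D:error_constraint} (note: the cancelling $\eta^{1/3}$ comes from this elastic-energy bound, not from $E_{inter,\eta}$ as your parenthetical suggests), and conclude by Cauchy--Schwarz. The genuine gap is your choice $v=\nu$. In the transport identity the term you must control after integrating by parts is the pairing against $\psi\,\partial_w\partial_v U_\eta$, i.e.\ you need an $L^2_{loc}$ bound of order $\delta^{-1}E_{elast}(\psi^2)^{1/2}$ on the mixed second derivative for the \emph{pair} (test vector $v$, differentiation direction $w$). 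The wave equations give this only for the two specific products of characteristic directions, e.g.\ for $u_1$ only for $\partial_{[111]}\partial_{[\overline111]}$ and $\partial_{[1\overline11]}\partial_{[11\overline1]}$. With $v=\nu$ (say $\nu=\nu_3^+$, so $\partial_\nu\propto\partial_{[111]}+\partial_{[11\overline1]}$) and $d\perp\nu$ arbitrary, $\partial_d\partial_\nu U_\eta$ contains uncontrolled pieces such as $\partial_{[1\overline11]}\partial_{[111]}u^{(\delta)}_{1,\eta}$ (that operator is only controlled when applied to $u_2$), so the Cauchy--Schwarz step cannot be run; ``differentiating the wave equation in direction $d$'' produces a third-order identity and does not repair this. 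The working choice is to take $v,w$ to be the two characteristic directions of a single wave equation, with $v\cdot\nu\neq0$ and $w\cdot\nu=0$: after localizing in $\xi$ at $\pm\nu$ the symbol $(\xi\cdot v)^2$ is a nonzero constant, each equation then controls $\partial_w(\tau_i^{(\delta)}\chi_{[\nu]})$ for one direction $w\perp\nu$, the two equations give two independent such directions, and a general $d\perp\nu$ is reached by linear combination at the level of the measure $\tau_i^{(\delta)}\chi_{[\nu]}\Leb^3$, not of the sequence.

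A second, smaller but real gap is the plan to contract against $a\equiv1$ and rely on the disjoint supports \eqref{disjoint_supports}. With $a\equiv1$ you only estimate the distributional derivative of the weighted sum $\sum_{\tilde\nu}(\tilde\nu\cdot v)^2\,\tau_i^{(\delta)}\chi_{[\tilde\nu]}$ (and with $v=\nu$ the weights for $\tilde\nu\neq\nu$ do not vanish, e.g.\ $\nu_3^+\cdot\nu_2^+\neq0$). Disjointness of supports does not allow you to pass from the derivative of the sum to the derivative of one summand: distributional derivatives of functions with disjoint supports can cancel across common boundaries, so you cannot even conclude that $\partial_d(\tau_i^{(\delta)}\chi_{[\nu]})$ is a measure, let alone get the pointwise bound. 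You need to localize in the Fourier variable with $a$ concentrated at $\pm\nu$ so that $\tau_i^{(\delta)}\chi_{[\nu]}$ is isolated \emph{before} differentiating, and then add the final approximation step (testing with $\psi^2$ approximating characteristic functions of Borel sets) showing the derivative is absolutely continuous with respect to $\Leb^3$, which your plan leaves implicit.
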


In the next step we "interpolate" the above two statements to obtain Besov regularity of twins in directions along the twin.
The main assumption is that there are either no oscillations or at least a certain amount of them, which boils down to the volume fractions of the martensite variants either being zero or bounded away from it.

\begin{lemma}\label{lemma:Besov_incomplete}
	There exists a universal constant $C>0$ with the following property:
	
	Let $i\in \{1,2,3\}$.
	Let there exist $\eps > 0 $ such that $18 \theta_i (1-\theta_i) \geq \eps$ almost everywhere on the set $\{0<\theta_i<1\}$.
	Let $U \subset\subset \Omega$ be an open subset.
	
	Then for $\nu \in N_{i+1} \cup N_{i-1}$, $d \in \Sph^2$ with $\nu \cdot d = 0$ and $0<h<\dist(U,\partial\Omega)$ we have
	\[\int_U |\partial_d^h\chi_{[\nu]} |\intd x \leq  C \eps^{-1} \left(E_{inter}^\Leb(U_h)\right)^\frac{2}{3}\left(E_{elast}^\Leb(U_h)\right)^\frac{1}{3} h^\frac{2}{3},\]
	where $E^{\Leb}_{elast}$ and $E^{\Leb}_{inter}$ are given by definition \eqref{Lebesgue_densities_notation}.
\end{lemma}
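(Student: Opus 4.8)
The plan is to bound $\int_U|\partial_d^h\chi_{[\nu]}|$ by comparing $\chi_{[\nu]}$ with the regularised weight $F\supdelta:=\tau_i\supdelta\chi_{[\nu]}$ of Lemma \ref{lemma:mass_in_approx_converges}, for a parameter $\delta>0$ to be fixed only at the very end. Two pointwise facts are needed. First, by \eqref{support_sum_chi} the set $\{\chi_{[\nu]}=1\}$ lies inside $\{0<\theta_i<1\}$, so by hypothesis $18\theta_i(1-\theta_i)\geq\eps$ there; together with the lower bound in \eqref{bounds_tau} this gives $F\supdelta=\tau_i\supdelta\geq\eps-36\delta E_{inter}^\Leb$ on $\{\chi_{[\nu]}=1\}$, whereas trivially $F\supdelta=0$ on $\{\chi_{[\nu]}=0\}$. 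Hence, introducing the bad set $B_\delta:=\{E_{inter}^\Leb>\eps/(72\delta)\}$, we have $F\supdelta\geq\eps/2$ on $\{\chi_{[\nu]}=1\}\setminus B_\delta$, while Chebyshev's inequality gives $|B_\delta\cap V|\leq\tfrac{72\delta}{\eps}E_{inter}^\Leb(V)$ for every measurable $V\subset\Omega$.

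Writing $\int_U|\partial_d^h\chi_{[\nu]}|=|\{x\in U:\chi_{[\nu]}(x)=1,\chi_{[\nu]}(x+hd)=0\}|+|\{x\in U:\chi_{[\nu]}(x)=0,\chi_{[\nu]}(x+hd)=1\}|$, I would treat the two symmetric pieces alike; take, say, $\chi_{[\nu]}(x)=1$ and $\chi_{[\nu]}(x+hd)=0$. If $x\notin B_\delta$ then $F\supdelta(x)\geq\eps/2$ while $F\supdelta(x+hd)=0$. This is where the transport property enters: by Proposition \ref{prop:transport_property} (applicable since $d\cdot\nu=0$) and the differential‑inequality Lemma \ref{lemma:differential_inequality}, for $\Leb^2$‑a.e. line in direction $d$ the function $(F\supdelta)^{1/2}$ is absolutely continuous along it with slope at most $\tfrac{C}{\delta}(E_{elast}^\Leb)^{1/2}$; since $h<\dist(U,\partial\Omega)$ the segment $[x,x+hd]$ stays in $\Omega$, so
\[\sqrt{\eps/2}\ \leq\ \left|(F\supdelta)^{1/2}(x)-(F\supdelta)^{1/2}(x+hd)\right|\ \leq\ \frac{C}{\delta}\int_0^h (E_{elast}^\Leb)^{1/2}(x+sd)\intd s.\]
Cauchy–Schwarz in $s$ upgrades this to $\int_0^h E_{elast}^\Leb(x+sd)\intd s\geq c\,\delta^2\eps\,h^{-1}$, so by Markov's inequality, Fubini, and $U+sd\subset U_h$ for $s\in[0,h]$, the set of such $x$ has measure at most $\tfrac{C h^2}{\delta^2\eps}E_{elast}^\Leb(U_h)$. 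Collecting the two pieces and adding the bad‑set contribution $|B_\delta\cap U_h|\leq\tfrac{72\delta}{\eps}E_{inter}^\Leb(U_h)$ yields
\[\int_U|\partial_d^h\chi_{[\nu]}|\ \leq\ \frac{C_1\delta}{\eps}\,E_{inter}^\Leb(U_h)\ +\ \frac{C_2 h^2}{\delta^2\eps}\,E_{elast}^\Leb(U_h).\]

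To conclude I would optimise over $\delta$: choosing $\delta\sim\big(E_{elast}^\Leb(U_h)/E_{inter}^\Leb(U_h)\big)^{1/3}h^{2/3}$ equilibrates the two terms and gives exactly $\tfrac{C}{\eps}\big(E_{inter}^\Leb(U_h)\big)^{2/3}\big(E_{elast}^\Leb(U_h)\big)^{1/3}h^{2/3}$; the degenerate case $E_{inter}^\Leb(U_h)=0$ is covered by instead letting $\delta\to\infty$, since then $B_\delta\cap U_h$ is $\Leb^3$‑null and the displayed bound forces $\int_U|\partial_d^h\chi_{[\nu]}|=0$.

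The main obstacle is the rigorous execution of the step "a jump of $\chi_{[\nu]}$ forces a definite amount of $\int_0^h(E_{elast}^\Leb)^{1/2}(x+sd)\intd s$": one must pass to the absolutely continuous representatives of $F\supdelta$ along $\Leb^2$‑a.e. line, justify the chain rule for $(\cdot)^{1/2}$ applied to a solution of the degenerate inequality $|\partial_d F\supdelta|\lesssim (F\supdelta)^{1/2}(E_{elast}^\Leb)^{1/2}$, and keep track of jump points only modulo $\Leb^3$‑null sets — this is precisely what Lemma \ref{lemma:differential_inequality} provides. The one genuinely clever move is applying Cauchy–Schwarz in $s$ \emph{before} Markov's inequality: the transport inequality only controls $\int(E_{elast}^\Leb)^{1/2}$, and paying the factor $h^{1/2}$ to replace it by $(\int E_{elast}^\Leb)^{1/2}$ is exactly what turns the $\delta$‑optimisation into the Besov exponent $2/3$ on $h$, while simultaneously leaving $E_{elast}^\Leb$ to the first power — hence its power $1/3$ in the final estimate.
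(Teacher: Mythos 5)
Your proposal is correct and follows essentially the same route as the paper: compare $\chi_{[\nu]}$ with the regularised weight $\tau_i\supdelta\chi_{[\nu]}$, charge the replacement error to $\delta E_{inter}^\Leb$ via Lemma \ref{lemma:mass_in_approx_converges}, use Proposition \ref{prop:transport_property} together with Lemma \ref{lemma:differential_inequality} along lines in direction $d$ to charge jumps to $\frac{h^2}{\delta^2}E_{elast}^\Leb$, and optimise in $\delta$. The only deviation is bookkeeping — your Chebyshev bad set and Markov/Fubini step replace the paper's direct integral inequality \eqref{besov_partition} and termwise integration — which yields the same bound with the same constants' structure.
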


The proof relies on the following easy consequence of the differential inequality, which we state separately to avoid redundant arguments.
Note that it is optimized for quick applicability in our setting and not for maximal generality.

\begin{lemma}\label{lemma:differential_inequality}
	Let $f : [0,1] \to [0,1]$ be continuous with $f(0)=0$.
	Furthermore, let it satisfy the differential inequality
	\[f'\leq f^\frac{1}{2} g^\frac{1}{2}\]
	almost everywhere for an integrable function $g: [0,1] \to [0,\infty)$.
	
	Then we have the estimate
%	\[f(t) \leq t^2 \left(\dashint_0^t g(s) \intd s\right)^2 .\]
%	In particular, if $g \in L^2(0,1)$ we also have
	\[f(t) \leq t^2 \dashint_0^t g(s) \intd s .\]
\end{lemma}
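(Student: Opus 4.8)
I would prove Lemma \ref{lemma:differential_inequality} by a Grönwall-type argument, but tailored to the square-root nonlinearity, which makes the ODE $f' = f^{1/2} g^{1/2}$ ill-posed at $f=0$. The natural substitution is to work with $\sqrt{f}$: formally, $(\sqrt f)' = \frac{f'}{2\sqrt f} \le \frac12 g^{1/2}$, so $\sqrt{f(t)} \le \frac12 \int_0^t g(s)^{1/2}\intd s$, and then Cauchy--Schwarz on the right gives $\sqrt{f(t)} \le \frac12 t^{1/2} \left(\int_0^t g\right)^{1/2}$, i.e. $f(t) \le \frac14\, t \int_0^t g(s)\intd s$. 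This is even slightly better than the claimed $f(t) \le t^2 \dashint_0^t g = t \int_0^t g$, so the constant is not an issue; the real work is justifying the manipulation of $\sqrt f$ when $f$ may vanish or fail to be differentiable at isolated points.

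\textbf{Key steps.} First I would fix $\kappa>0$ and consider $f_\kappa := f + \kappa$, which is bounded below by $\kappa$, so $\sqrt{f_\kappa}$ is Lipschitz wherever $f$ is and is differentiable a.e. with $(\sqrt{f_\kappa})' = \frac{f'}{2\sqrt{f_\kappa}} \le \frac{f^{1/2} g^{1/2}}{2 f_\kappa^{1/2}} \le \frac12 g^{1/2}$ a.e. Since $f$ is continuous and (being the solution of a differential inequality with integrable right-hand side, hence absolutely continuous) $\sqrt{f_\kappa}$ is absolutely continuous, the fundamental theorem of calculus gives $\sqrt{f_\kappa(t)} - \sqrt{\kappa} = \sqrt{f_\kappa(t)} - \sqrt{f_\kappa(0)} \le \frac12\int_0^t g(s)^{1/2}\intd s$ for every $t\in[0,1]$. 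Letting $\kappa \to 0$ and using continuity of $f$ at $0$ with $f(0)=0$ yields $\sqrt{f(t)} \le \frac12 \int_0^t g(s)^{1/2}\intd s$. Then Cauchy--Schwarz, $\int_0^t g^{1/2} \le t^{1/2}\left(\int_0^t g\right)^{1/2}$, and squaring gives $f(t) \le \frac14 t \int_0^t g(s)\intd s \le t^2 \dashint_0^t g(s)\intd s$, which is the assertion.

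\textbf{Regularity caveat.} The one point that deserves care is the implicit regularity of $f$: the statement only says $f$ is continuous and satisfies $f' \le f^{1/2}g^{1/2}$ a.e., which presupposes $f$ is differentiable a.e. For the fundamental theorem of calculus to apply I need $f$ (equivalently $\sqrt{f_\kappa}$) to be absolutely continuous, not merely a.e.-differentiable. In the intended application $f = \int_0^\bullet \tau_i\supdelta\chi_{[\nu]}$-type quantities arising from Proposition \ref{prop:transport_property}, so $f$ is automatically Lipschitz (hence absolutely continuous) on $[0,1]$; I would therefore either add ``$f$ absolutely continuous'' to the hypotheses or simply note that the differential inequality is to be understood in the sense that $f(t) - f(s) \le \int_s^t f^{1/2}g^{1/2}$, which is exactly what is needed and what the transport estimate delivers. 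With that reading the argument above is rigorous.

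\textbf{Main obstacle.} I expect the only genuine subtlety — and it is mild — to be the degeneracy of $\sqrt{\cdot}$ at the origin, handled by the $\kappa$-regularization and a limit; everything else is Cauchy--Schwarz and the fundamental theorem of calculus. There is no deep difficulty here; the lemma is deliberately a packaging of a standard comparison estimate into a form that can be quoted verbatim in the proof of Lemma \ref{lemma:Besov_incomplete}.
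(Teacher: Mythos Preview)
Your proposal is correct and follows essentially the same route as the paper: pass to $\sqrt{f}$, use the differential inequality to bound $(\sqrt{f})' \le \tfrac12 g^{1/2}$, integrate via the fundamental theorem of calculus, and then apply Cauchy--Schwarz/Jensen to obtain the stated bound. The only cosmetic difference is how the degeneracy at $f=0$ is handled---the paper restricts to the maximal interval on which $f>0$ and works there, whereas you regularize by $f_\kappa = f+\kappa$ and let $\kappa\to 0$; both devices serve the same purpose, and your regularity caveat is well taken (the paper tacitly assumes the same absolute continuity).
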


\section{Proofs}\label{sec:proofs}
\subsection{Existence of the H-measures}

\begin{proof}[Proof of Lemma \ref{lemma:H_measure_non-dense_set}]
For $\delta_1,\delta_2>0$ we have the estimate
\[\left|\left|\varphi_{\delta_1\eta^{\frac{1}{3}}}-\varphi_{\delta_2\eta^{\frac{1}{3}}}\right|\right|_{L^1(\R^3)} = \left|\left|\varphi_{\frac{\delta_1}{\delta_2}}-\varphi_1\right|\right|_{L^1(\R^3)}\lesssim \left|1-\frac{\delta_1}{\delta_2}\right|.\]
Thus for $i,j,k=1,2,3$; $\psi_1,\psi_2\in C_c(\Omega)$ and $a\in C(\Sph^2)$ we have
\begin{align*}& \bigg|\int_{\R^3}\F\left(\psi_1\left(\partial_j u^{(\delta_1)}_{i,\eta}-\partial_j u_i\right)\right)\F^*\left(\psi_1\left(\partial_k u^{(\delta_1)}_{i,\eta}-\partial_k u_i\right)\right)a\intd \xi\\
& \qquad - \int_{\R^3}\F\left(\psi_1\left(\partial_j u^{(\delta_2)}_{k,\eta}-\partial_j u_i\right)\right)\F^*\left(\psi_1\left(\partial_k u^{(\delta_2)}_{i,\eta}-\partial_k u_i\right)\right)a\intd \xi\bigg|\\
 \lesssim & ||\psi_1||_\infty||\psi_2||_\infty||a||_\infty \sup_{\eta,\delta}\left(||\nabla u_\eta^{(\delta)}||_{L^2} \right) ||\nabla u_\eta^{(\delta_1)} - \nabla u_\eta^{(\delta_2)}||_{L^2}\\
 \lesssim & ||\psi_1||_\infty||\psi_2||_\infty||a||_\infty \left(\sup_{\eta}||\nabla u_\eta||_{L^2} \right)^2 \left|1-\frac{\delta_1}{\delta_2}\right|.
\end{align*}
As this implies convergence to zero as $|\delta_1 - \delta_2| \to 0$ uniformly in $\eta$ we see that the claim holds.
% which proves that $\mu_i^{(\delta_j)}(e_j,e_k;\bullet \otimes \bullet)$ is a Cauchy sequence in the space of measures on $\Omega\times\Sph^1$ if the H-measures exist and $\delta_j \to \delta$ for some $\delta>0$.
% The same estimate can also be used to prove that the limiting measure is in fact the H-measure associated to $\partial_j u^{(\delta)}_{i,\eta}-\partial_j u_i$ and $\partial_k u^{(\delta)}_{i,\eta}-\partial_k u_i$.
\end{proof}

\subsection{Structure of the H-measures}
\begin{proof}[Proof of Lemma \ref{lemma:H-measure_wave_equations}]
We first deal with the case $\delta = 0$.
 Throughout the proof $h_\eta$ is a generic sequence of vector fields satisfying the desired bound which can change from line to line.
 By symmetry it is sufficient to prove the equations involving $u_1$.
 We calculate
 \begin{align*}
  \partial_{[111]}\partial_{[\overline 111]}  & = -\partial^2_1 + \partial_1\partial_2 + \partial_1\partial_3 - \partial_1\partial_2 + \partial^2_2 + \partial_2\partial_3 - \partial_1\partial_3  + \partial_2\partial_3  + \partial^2_3 \\
  & = -\partial_1^2 +\partial_2^2 + \partial_3^2 + 2 \partial_2\partial_3
 \end{align*}
 and, similarly,
  \[\partial_{[1\overline11]}\partial_{[ 11\overline1]}  = \partial_1^2 - \partial_2^2 - \partial_3^2 + 2 \partial_2\partial_3.\]
 Setting the strain space $S = \{e \in \R^{3\times 3}: e \text{ diagonal}, \tr e = 0\}$ and noting that $e_0, \ldots, e_3 \in S$ we get for $\delta>0$ that
 \[\dist^2\left(\frac{1}{2}(Du_\eta + Du_\eta^T),S\right) \leq \left|\frac{1}{2}\left(Du + Du^T\right) -\sum_{i=1}^3\chi_i e_i\right|^2. \] 
 Consequently, we see
 \begin{align*}
  (-\partial_1^2 +\partial_2^2 + \partial_3^2)u_{1,\eta} & = -\partial_1^2 u_{1,\eta} -\partial_2\partial_1 u_{2,\eta} -\partial_3\partial_1u_{3,\eta} + \Div h_\eta \\
  & = -\partial_1 \operatorname{tr} Du_\eta + \Div h_\eta \\
  & = \Div h_\eta.
 \end{align*}
 We also obtain
 \[\partial_2\partial_3 u_{1,\eta} = - \partial_2\partial_1 u_{3,\eta} + \Div h_\eta = \partial_1\partial_3 u_{2,\eta} + \Div h_\eta = - \partial_2\partial_3 u_{1,\eta} + \Div h_\eta\]
 and because the derivatives appear on both sides with opposite signs we have \[\partial_2\partial_3 u_{1,\eta} = \Div h_\eta.\]
 For $\delta >0$ we only have to use that convolution and differentiation commute.
%  the only change is that one additionally has to use the estimate
% \[\int_\Omega \psi^2(x) \dist\left(\frac{1}{2}\left(Du\supdelta_\eta + (Du\supdelta_\eta)^T\right),V\right)  \leq \int_\Omega \dist\left(\frac{1}{2}(Du_\eta + Du_\eta^T),V\right) \]
% for $\psi \in C_c(\Omega;[0,1])$ such that $\supp(\psi) + \ball{0}{\delta} \subset \Omega$.
\end{proof}

\begin{proof}[Proof of Lemma \ref{lem:structmeas3D}]
\textit{Step 1: Gradient H-measures.}\\
Equation \eqref{lem:structmeas:gradientmeas3D} is simply the characterization of gradient H-measures \cite[Lemma 3.10 first part]{Tartar90}.
We will however briefly give the argument:
As the generating sequence for $\mu\supdelta_i$ is curl-free, the localization principle for H-measures \cite[Theorem 1.6]{Tartar90} implies
\[\mu\supdelta_i(E_j,E_k;\bullet \otimes \xi_m \bullet) = \mu\supdelta_i(E_m,E_k;\bullet \otimes \xi_j \bullet)\]
for all $i,j,k,m = 1,2,3$.
For all $i,j,k =1,2,3$ we consequently have
\[\mu\supdelta_i(E_j,E_k; \bullet \otimes \bullet) = \sum_{m=1}^3 \mu\supdelta_i(E_j,E_k; \bullet \otimes \xi_m^2\bullet) = \sum_{m=1}^3 \mu\supdelta_i(E_m,E_k; \bullet \otimes \xi_j\xi_m\bullet).\]
As the measure $\mu\supdelta_i$ is hermitian non-negative \cite[Corollary 1.2]{Tartar90} we see that
\begin{align*}
  \sum_{m=1}^3 \mu\supdelta_i(E_m,E_k; \bullet \otimes \xi_j\xi_m\bullet) & =  \sum_{m=1}^3 \mu\supdelta_i(E_k,E_m; \bullet^* \otimes \xi_j\xi_m\bullet^*)^* \\
  & = \sum_{m=1}^3 \mu\supdelta_i(E_m,E_m; \bullet^* \otimes \xi_j\xi_k\bullet^*)^*\\
  & = \sum_{m=1}^3 \mu\supdelta_i(E_m,E_m; \bullet \otimes \xi_j\xi_k\bullet)\\
  & = \sigma\supdelta_i(\bullet \otimes \xi_j\xi_k\bullet)
\end{align*}
for $\sigma\supdelta_i := \sum_{m=1}^3 \mu\supdelta_i(E_m,E_m; \bullet \otimes \bullet)$.
In particular, the measure $\sigma\supdelta_i$ is non-negative in the sense that if $\psi \in C_c(\R^3;\R_{\geq 0})$ and $a\in C(\Sph^2;\R_{\geq 0})$ we have
$\sigma\supdelta_i(\psi\otimes a) \geq 0$.
% By the characterization of gradient H-measures, see e.g. the first part of \cite[Lemma 3.10]{Tartar90}, there exists a non-negative measure $\nu$ on $\Omega \times \Sph^2$ with $\mu(E_i,E_j;\bullet \otimes \bullet) = \nu(\bullet \otimes \xi_i\xi_j \bullet)$ for $i,j = 1,2,3$.
Since both sides of equation $\eqref{lem:structmeas:gradientmeas3D}$ are bilinear and they agree on a basis of $\R^3$ we must have equality for all $v,w\in \R^3$.

\textit{Step 2: Structure of the Fourier variable part.}\\
Combining the localization principle \cite[Theorem 1.6]{Tartar90} with the first equation of Lemma \ref{lemma:H-measure_wave_equations} we see that
\[\mu\supdelta_1([111],E_j;\bullet \otimes \xi \cdot [\overline111]\bullet) = 0\]
for all $j=1,2,3$.
Writing this in terms of $\sigma_i$ and replacing $\bullet$ by $\xi_j\bullet$ this reads
\[\sigma\supdelta_1\Big(\bullet\otimes (\xi \cdot [\overline111])(\xi \cdot [111])\xi_j^2 \bullet\Big) = 0.\]
Summation in $j$ yields
\[\sigma\supdelta_1\Big(\bullet\otimes (\xi \cdot [\overline111])(\xi \cdot [111]) \bullet\Big) = 0.\]
Using the second equation of Lemma \ref{lemma:H-measure_wave_equations} we instead get
\[\sigma\supdelta_1\Big(\bullet\otimes (\xi \cdot [1\overline11])(\xi \cdot [11\overline1]) \bullet\Big) = 0.\]
In particular, for every $\psi \in C_c(\R^3;\R_{\geq>0})$ we have
\begin{align*}
	& \quad  \supp(\sigma\supdelta_1(\psi \otimes \bullet)) \\
	& \subset \left(\{\xi \cdot [\overline111] = 0\} \cup \{\xi \cdot [111] = 0\} \right) \cap \left(\{\xi \cdot [1\overline11] = 0\} \cup \{\xi \cdot [11\overline1] = 0\} \right) \cap \Sph^2\\
	& = \pm N_2 \cup \pm N_3,
\end{align*}
where the last step is a straightforward consequence of definition \ref{def:normals}.
Consequently, the measure $\supp(\sigma\supdelta_1(\psi \otimes \bullet)) $ is a linear combination of Dirac measures supported on the set $\pm N_2 \cup \pm N_3$, where the coefficients are given by integrating $\psi$ against Radon measures on $\Omega$.

Because for real valued functions $f$ on $\R^3$ we have $\overline{\mathcal{F}f(\xi)}=\mathcal{F}f(-\xi),$
we see that the measure $\sigma\supdelta_1(\psi \otimes\bullet)$ is invariant under reflection in $\xi$ due to being non-negative and thus real-valued.
Hence there exist non-negative Radon measures $\omega\supdelta_{1,[\nu]}$ on $\Omega$ for $\nu\in N_2\cup N_3$ such that
\[\sigma\supdelta_1=\sum_{\nu \in N_2 \cup N_3} \omega\supdelta_{1,[\nu]} \otimes \delta_{[\nu]}.\]
We can also relate the ``cumulative'' gradient $H$-measure $\sigma_1\supdelta$ exclusively to the $H$-measure $\mu_1$ associated to the corresponding diagonal entry $e(u)_{11}$ of the strain:
 For $\psi \in C_c(\Omega,\R_{\geq 0})$ and $a\in C(\Sph^2;[0,1])$ we see using $ \frac{1}{2}\nu^2 = \nu_1^2$ for $\nu \in N_2 \cup N_3$ and the characterization of gradient H-measures \eqref{lem:structmeas:gradientmeas3D} that
  \[\frac{1}{2}\sigma_1\supdelta(\psi^2 \otimes a) = \frac{1}{2}\sigma_1\supdelta(\psi^2\otimes \xi^2 a) =\sigma_1\supdelta(\psi^2\otimes \xi_1^2 a) = \mu_1\supdelta(E_1,E_1;\psi^2 \otimes a).\numberthis \label{representation_gradient_h_measure}\]

Using similar arguments, we see that also for $i=2,3$ there exist Radon measures $\omega\supdelta_{i,[\nu]}$ on $\Omega$ for $\nu\in N_{i+1}\cup N_{i-1}$ such that
\[\sigma\supdelta_i=\sum_{\nu \in N_{i+1} \cup N_{i-1}} \omega\supdelta_{i,[\nu]} \otimes \delta_{[\nu]} = 2 \mu_i(E_i,E_i;\bullet \otimes \bullet) .\]
% Note that $\omega_{i,i+1}^{(\pm)}$ appear in $\nu_i$ and $\nu_{i+1}!$
% Also, recall that we defined \[\delta_{j,j+1}^{(+)}=\frac{1}{2}\left(\delta_{\left(\xi_{j-1}=0,\xi_j=\frac{1}{\sqrt 2},\xi_{j+1}=\frac{1}{\sqrt 2}\right)}+\delta_{\left(\xi_{j-1}=0,\xi_j=-\frac{1}{\sqrt 2},\xi_{j+1}=-\frac{1}{\sqrt 2}\right)}\right)\] and \[\delta_{j,j+1}^{(-)}=\frac{1}{2}\left(\delta_{\left(\xi_{j-1}=0,\xi_j=\frac{1}{\sqrt 2},\xi_{j+1}=-\frac{1}{\sqrt 2}\right)}+\delta_{\left(\xi_{j-1}=0,\xi_j=-\frac{1}{\sqrt 2},\xi_{j+1}=\frac{1}{\sqrt 2}\right)}\right).\]
\textit{Step 3: For $\{i,j,k\} = \{1,2,3\}$ we have $\omega\supdelta_{i,[\nu]}= \omega\supdelta_{j,[\nu]}$ for $\nu \in N_k$. In particular, we may write $\omega_{[\nu]}\supdelta$ instead.}\\
In order to keep the notation simple we will only deal with the case $i=1$, $j=2$ and $k=3$.
All others work similarly.
Let $\nu \in N_3$.
Let $\psi \in C_c(\Omega)$ and let $a \in C(\Sph^2;[0,1])$ be such that $a(\pm\nu) = 1$ and $a(\pm \tilde \nu) = 0$ for $\tilde \nu \in N\setminus \{\nu\}$.
As all limiting strains $e_i$ are trace-free we get that
\[\partial_1u_1\supdelta + \partial_2u\supdelta_2 + \partial_3u\supdelta_3 \to 0 \text{ in } L^2.\]
Consequently, we get that
\begin{align*}
	& \quad \mu_1\supdelta(E_1,E_1;|\psi|^2\otimes a) \\
	& = \lim_{\eta \to 0} \int_{\R^ 2}\left|\F\left(\psi\left(\partial_2 u\supdelta_{2,\eta} +\partial_3 u\supdelta_{3,\eta}-(\partial_2 u_2 + \partial_3 u_3)\right)\right)\right|^2 a\left(\frac{\xi}{|\xi|}\right)\intd \xi.
\end{align*}
Expanding the square we see that all terms involving $\partial_3 u_{3,\eta}\supdelta - \partial_3 u_3$ drop out since
\begin{align*}
	\lim_{\eta \to 0} \int_{\R^ 2}\left|\F\left(\psi\left(\partial_3 u\supdelta_{3,\eta}- \partial_3 u_3\right)\right)\right|^2 a\left(\frac{\xi}{|\xi|}\right)\intd \xi
	& = \mu_3\supdelta\left(E_3,E_3;|\psi|^2\otimes a\right) \\
	& = \sigma_3\supdelta\left(|\psi|^2 \otimes \xi_3^2 a \right)\\
	& = 0
\end{align*}
due to $\supp \sigma_3(\psi\otimes \bullet) \subset \pm N_1 \cup N_2$ and $a(\pm \tilde \nu)$ for $\tilde \nu \in N \setminus \{\nu\}$.
As a result we get
\[\mu_1\supdelta(E_1,E_1;|\psi|^2\otimes a) = \mu_2\supdelta(E_2,E_2;|\psi|^2\otimes a),\]
which using the choice of $a$ localizing at $\pm \nu$ and the representation \eqref{representation_gradient_h_measure}	 implies
\[\frac{1}{2}\omega_{1,[\nu]}\supdelta(\bullet) = \frac{1}{2}\sigma_1\supdelta(\bullet\otimes a) = \mu_1\supdelta(E_1,E_1;\bullet\otimes a) = \mu_2\supdelta(E_2,E_2;\bullet\otimes a) = \frac{1}{2}\omega_{2,[\nu]}\supdelta(\bullet).\]

\textit{Step 4: Absolute continuity of $\omega_{[\nu]}^{(0)}$ for $\nu \in N$ and equation \eqref{3D:lem:decompostion_of_gradient_measure} for $\delta =0$.}\\
 Note that we will drop the superscript for the duration of this step.
 Furthermore, we only deal with the case $\nu \in N_2 \cup N_3$.
 The case $\nu \in N_1$ works the same.

  Recalling the representation \eqref{representation_gradient_h_measure}, the definition of H-measures and $0\leq a \leq 1$ we obtain
   \[\frac{1}{2}\sigma_1(\psi^2 \otimes a) = \lim_{\eta \to 0} \int_\R^3 a |\F (\psi \partial_1(u_{1,\eta}-u_1))|^2 \intd \xi \leq \lim_{\eta \to 0} \int_\R^3  |\F (\psi \partial_1 (u_{1,\eta}-u_1))|^2 \intd \xi\]
  with equality for $a\equiv 1$.
  An application of Parseval's theorem implies
  \begin{align}\label{Parseval}
    \frac{1}{2}\sigma_1(\psi^2 \otimes a) \leq \lim_{\eta \to 0} \int_\Omega  |\psi \partial_1 (u_{1,\eta}-u_1)|^2 \intd x.
  \end{align}

  Using equations \eqref{relation_Du_chi} and \eqref{relation_Du_theta} we can relate this limit to the limiting volume fraction $\theta_i$ by observing
  \[ \lim_{\eta \to 0} \int_\Omega \psi^2 \left(\partial_1 u_{1,\eta} - \partial_1 u_1\right)^ 2 \intd \Leb^3 = \lim_{\eta \to 0}\int_\Omega \psi^2 \left(3\chi_1 + \chi_0 -3\theta_1 - \theta_0\right)^2 \intd \Leb^3. \]
  Expanding the square and using the fact that $\chi_0$ and $\chi_1$ are characteristic functions of disjoint sets we see that the right-hand side equals
  \begin{align*}
    & \quad \lim_{\eta \to 0}\int_\Omega \psi^2 \left(9\chi_1 - 6 \chi_1(3\theta_1 + \theta_0) + \chi_0 - 2 \chi_0(3\theta_1+\theta_0) + (3\theta_1 + \theta_0)^2\right) \intd \Leb^3\\
    & = \int_\Omega  \psi^2\left( 9 \theta_1(1-\theta_1) - 6\theta_0\theta_1 + \theta_0(1-\theta_0) \right)\intd \Leb^3.
%   =\lim_{\eta \to 0}\int_\Omega 9 \psi^2 \left(\chi_1 - \theta_1^2\right) \intd \Leb^3
%   = \int_\Omega 9 \psi^2 \theta_1(1-\theta_1) \intd \Leb^3.
  \end{align*}
  Altogether we proved
  \begin{align}\label{3D:limit_Du^2}
   \lim_{\eta \to 0} \int_\Omega \psi^2 \left(\partial_1 u_{1,\eta} - \partial_1 u_1\right)^ 2 \intd \Leb^3 = \int_\Omega  \psi^2\left( 9 \theta_1(1-\theta_1) - 6\theta_0\theta_1 + \theta_0(1-\theta_0) \right)\intd \Leb^3.
  \end{align}   
  Using $a$ to localize at the directions $\pm \nu \in N_2 \cup N_3$ where $\sigma_i$ may concentrate and combining inequality \eqref{Parseval} with the convergence \eqref{3D:limit_Du^2} we see that $\omega_{[\nu]}$ must be absolutely continuous w.r.t.\ the measure $\Leb^3$ with some density $A_{[\nu]}^{(0)}$.
  When instead using $a\equiv 1$ the estimates turn into the identity
  \[\sum_{\nu \in N_{i+1} \cup N_{i-1}} A_{[\nu]}^{(0)}  \equiv 18\,\theta_i(1-\theta_i) - 12\, \theta_0\theta_i + 2\, \theta_0(1-\theta_0).\]
  
\textit{Step 5:  We have $\sigma_i\supdelta \leq \sigma_i$ as measures for $i=1,2,3$ and $\delta > 0$. In particular, the functions $A_{[\nu]}\supdelta \in L^\infty(\Omega)$ exist such that equation \eqref{3D:lem:decompostion_of_gradient_measure} holds.}\\
Let $\psi \in C_c(\Omega;\R)$.
First note that 
\[\psi \varphi_{\delta\eta^\frac{1}{3}}\ast\nabla\left(u_{1,\eta}-u_1\right) -\varphi_{\delta\eta^\frac{1}{3}}\ast\left(\psi  \nabla\left(u_{1,\eta}-u_1\right)\right) \to 0\]
 in $L^2$.
Thus for $a \in C(\Sph^2;\R_{\geq 0})$ and $j=1,2,3$ we can calculate, exploiting the fact $\left|\left|\F\left(\varphi_{\delta\eta^\frac{1}{3}}\right)\right|\right|_\infty \leq \left|\left|\varphi_{\delta\eta^\frac{1}{3}}\right|\right|_{L^1} =1$ along the way, that
\begin{align*}
 \mu_1\supdelta\left(E_j,E_j;\psi^2\otimes a\right) & = \lim_{\eta \to 0} \int a \left|\F \left(\varphi_{\delta\eta^\frac{1}{3}}\ast\psi  \partial_j\left(u_{1,\eta}-u_1\right)\right)\right|^2 \intd \xi \\
 & = \lim_{\eta \to 0} \int a \left|\F\left(\varphi_{\delta\eta^\frac{1}{3}}\right)\right|^2 \left|\F (\psi  \partial_j\left(u_{1,\eta}-u_1\right)\right|^2 \intd \xi\\
 & \leq \lim_{\eta \to 0} \int a \left|\F \left(\psi  \partial_j\left(u_{1,\eta}-u_1\right)\right)\right|^2 \intd \xi\\
 & \leq \mu_1\left(E_j,E_j;\psi^2\otimes a\right).
\end{align*}

%VORSICHT MIT DER NORMALISIERUNG!!! WIE WIRKT SICH DIE AUF DIE EIGENSCHAFTEN DER HMASSE AUS? ICH BENUTZE HIER AUCH DIE FALTUNGSEIGENSCHAFT DER FOURIERTRAFO.
An application of the identity \eqref{representation_gradient_h_measure} yields
\[
  \sigma_1\supdelta\left(\psi^2 \otimes a\right)  = 2\mu_1\supdelta\left(E_j,E_j;\psi^2\otimes a\right)  \leq 2 \mu_1\left(E_j,E_j;\psi^2\otimes a\right)  = \sigma_1\left(\psi^2 \otimes a\right). \qedhere
\]
\end{proof}

\begin{proof}[Proof of Proposition \ref{prop:disjoint_supports}]
Let $y \in \Omega$ and $r>0$ be such that $\ball{y}{r}\subset \Omega$.
By translation invariance we can assume $y=0$.

\textit{Step 1: Applying stability of twins after rescaling.}\\
Setting $\hat x := \frac{x}{r}$ and $\hat \eta :=\frac{\eta}{r}$ we re-scale the displacements and partitions to the unit ball:
Let $\hat u_\frac{\eta}{r}: \ball{0}{1} \to \R^2$ and $\hat \chi_\frac{\eta}{r}: \ball{0}{1} \to \{0,1\}$ be defined as \[\hat u_{\hat \eta}(\hat x):= \frac{1}{r}u_\eta\left(r\hat x\right)\text{, }\hat \chi_{\hat \eta}(\hat x):= \chi_\eta\left(r\hat x\right).\]
The energy of the re-scaled functions is
\begin{align*}
  E_{\hat\eta}(\hat u_{\hat \eta},\hat \chi_{\hat\eta}) & = \hat \eta^{-\frac{2}{3}}\int_\ball{0}{1} \left| e(\hat u_{\hat \eta})-\sum_{i=1}^3 \hat \chi_{i,\hat\eta} e_i\right|^2 \intd \hat x + \hat \eta ^{\frac{1}{3}}|D\hat \chi_{\hat \eta}|(\ball{0}{1})\\
  & = r^{-3 + \frac{2}{3}}E_{\eta}(\ball{0}{r}).
\end{align*}

%To keep the notation simple we additionally assume that $\tilde \nu = \frac{1}{\sqrt 2} (110)$.
%As usual this assumption is non-essential.
By the Capella-Otto rigidity theorem \cite{CO12} there exists a universal radius $0<s<1$ and bounded functions $\hat f_{\nu,\hat \eta}:\ball{0}{1} \to \R$ depending only on $x\cdot \nu$ with $\nu \in N$ such that
\begin{align}\label{kaese}
  \begin{split}
  \min\left\{
    \min_{i=1,2,3; \nu \in N_i} \left\{\left|\left| e(\hat u_\eta ) - \hat f_{\nu,\hat \eta}\,e_{i+1} - (1- \hat f_{\nu,\hat \eta})\, e_{i-1}\right|\right|^2_{L^2(\ball{0}{s})}\right\},
    ||e(\hat u_\eta)||^2_{L^2(\ball{0}{s})}
    \right\}\\
    \lesssim \left(r^{-3 +\frac{2}{3}}E_{\eta}(\ball{0}{r})\right)^\frac{1}{4} + r^{-3 +\frac{2}{3}}E_{\eta}(\ball{0}{r}).
  \end{split}
\end{align}
Just keeping the $i$-th and the $(i+1)$-th diagonal entries of the strain in the inner minimum and the entire diagonal in $||e(\hat u_\eta )||^2$ we see with $(e_i)_{jj} = 1 - 3 \delta_{ij}$ for $i,j\in \{1,2,3\}$ that
\begin{align*}
  \min\Bigg\{
    \min_{i=1,2,3; \nu \in N_i} \Big\{\left|\left|\partial_i \hat u_{i,\hat \eta} -1 \right|\right|^2_{L^2(\ball{0}{s})} 
    & + \left|\left|\partial_{i+1}\hat u_{i+1,\hat \eta} + 3 \hat f_{\nu,\hat \eta}-1 \right|\right|^2_{L^2(\ball{0}{s})}\Big\},\\
    &\qquad \qquad \qquad \qquad \qquad \sum_{i=1}^3||\partial_i \hat u_i||^2_{L^2(\ball{0}{s})}
   \Bigg\}\\
    & \lesssim \left(r^{-3 +\frac{2}{3}}E_{\eta}(\ball{0}{r})\right)^\frac{1}{4} + r^{-3 +\frac{2}{3}}E_{\eta}(\ball{0}{r}).
\end{align*}

Re-scaling back to $\ball{0}{r}$ we get $f_{\nu,\eta}:\ball{0}{r} \to \R$ bounded and depending only on $x\cdot \nu$ for each $\nu \in N$ with
\begin{align}\label{osc_error_sequence}
	\begin{split}
	  \min\Bigg\{
	  \min_{i=1,2,3; \nu \in N_i} \Big\{r^{-3}\left|\left|\partial_i  u_{i, \eta} -1 \right|\right|^2_{L^2(\ball{0}{sr})} 
	  & + r^{-3}\left|\left|\partial_{i+1} u_{i+1,\eta} - f_{\nu, \eta}\right|\right|^2_{L^2(\ball{0}{sr})}\Big\},\\
	  &\qquad \qquad \qquad \qquad  \sum_{i=1}^3r^{-3}||\partial_iu_i||^2_{L^2(\ball{0}{sr})}
	  \Bigg\}\\
	  & \lesssim \left(r^{-3 +\frac{2}{3}}E_{\eta}(\ball{0}{r})\right)^\frac{1}{4} + r^{-3 +\frac{2}{3}}E_{\eta}(\ball{0}{r}).
	\end{split}
\end{align}

%\begin{align}
%  \min\left\{\frac{1}{r^2}||\partial_1u_{1,\eta}-g_{(11),\eta}||^2_{L^2(\ball{0}{sr})},\frac{1}{r^2}||\partial_1u_{1,\eta}-g_{(1\overline 1),\eta}||^2_{L^2(\ball{0}{rs})}\right\} \lesssim \left(r^{-\frac{4}{3}}E_{\eta}(\ball{0}{r})\right)^\frac{1}{4}.\label{osc_error_sequence}
%\end{align}

\textit{Step 2: The H-measure mostly concentrates on the twinning direction in the sense that
	\begin{align*}
  		& \quad \min_{i=1,2,3; \nu \in N_i} \left\{\sum_{\tilde \nu \in N\setminus\{\nu\}} \dashint_{\ball{0}{\frac{sr}{2}}} A_{[\tilde \nu]}^{(0)} \intd x\right\} 
		% &\lesssim \liminf_{\eta \to 0 } \min_{i=1,2,3; \bar \nu \in N_i} \left\{\frac{1}{r^3}\left|\left|\partial_i u_{i, \eta}\right|\right|^2_{L^2(\ball{0}{sr})}  + \frac{1}{r^3}\left|\left|\partial_{i+1} u_{i+1,\eta}- f_{\bar \nu, \eta}\right|\right|^2_{L^2(\ball{0}{sr})}\right\}.
  		\lesssim \left(r^{-3 +\frac{2}{3}}E\left(\overline{\ball{0}{r}}\right)\right)^\frac{1}{4} + r^{-3 +\frac{2}{3}}E\left(\overline{\ball{0}{r}}\right),
	\end{align*}
	where $E\left(\overline{\ball{0}{r}}\right) :=(E_{elast} + E_{inter})\left(\overline{\ball{0}{r}}\right)$.
}\\
As weak convergence of Radon measures is upper semi-continuous on compact sets we may extract a subsequence such that
\[\lim_{\eta \to 0} E_{\eta}(\ball{0}{r}) \leq E\left(\overline{\ball{0}{r}}\right).\]
After extracting yet another subsequence there exist $f_{\nu}: \ball{0}{1}\to \R$ for each $\nu \in N$ such that
\[f_{\nu,\eta} \overset{\ast}{\warr}f_{\nu} \text{ in }L^\infty\]
and we have
\begin{align}\label{osc_error_limit}
  \begin{split}
    & ||\partial_{i+1} u_{i+1}-f_{\bar \nu}||^2_{L^2(\ball{0}{sr})} \leq \liminf_{\eta \to 0} ||\partial_{i+1}u_{i+1,\eta}-f_{\bar \nu,\eta}||^2_{L^2(\ball{0}{sr})}.
  \end{split}
\end{align}

Let $\nu \in N_i$ for $i\in \{1,2,3\}$.
Note that the localization principle for H-measures implies that the support of any H-measure involving $f_{\nu,\eta}-f_{ \nu}$ as a factor is contained in $\{\pm \nu\}$.
Thus for a cut-off function $\psi \in C_c(\Omega; [0,1])$ of $\ball{0}{\frac{s}{2}}$ in $\ball{0}{s}$ and $a_{[\nu]}\in C(\Sph^2;[0,1])$ with $a_{[ \nu]}(\pm \nu) = 1$ and $a_{[\nu]}(\pm \tilde \nu) =0 $ for $\tilde \nu \in N \setminus \{\nu\}$ we get
\begin{align}\label{wurscht}
	\begin{split}
  		& \quad \mu_{i+1}\left(E_{i+1},E_{i+1};\psi\otimes (1-a_{[ \nu]})\right) \\
  		&  \lesssim \liminf_{\eta \to 0} ||\partial_{i+1}(u_{{i+1},\eta}-u_{i+1} )-(f_{ \nu,\eta}-f_{\nu})||^2_{L^2(\ball{0}{sr})}.
	\end{split}	
 \end{align}
Using the representation \eqref{3D:lem:decompostion_of_gradient_measure} of $\sigma_{i+1}$, identity \eqref{representation_gradient_h_measure}, i.e.,
\[\frac{1}{2}\sigma_{i+1}(\bullet\otimes \bullet) = \mu_{i+1}\left(E_{i+1},E_{i+1};\bullet \otimes \bullet\right),\]
and equations \eqref{wurscht} and \eqref{osc_error_limit} we get
\begin{align*}
  \sum_{\tilde \nu \in N_i\cup N_{i-1} \setminus \{\nu\}} \int_{\ball{0}{r}} \psi A_{[\tilde \nu]}\intd x = \sigma_{i+1}(\psi\otimes( 1- a_{[\nu]}) \lesssim \liminf_{\eta \to 0} ||\partial_{i+1}u_{{i+1},\eta}-f_{ \nu,\eta}||^2_{L^2(\ball{0}{sr})}.
\end{align*}
We plug this estimate into the inequality \eqref{osc_error_sequence} along with the crude estimate 
\begin{align*}
  & \sum_{\tilde \nu \in N_{i+1}\cup N_{i-1}} \int_{\ball{0}{r}} \psi A_{[\tilde \nu]}\intd x 
   = \sigma_i(\psi\otimes 1)\\
  & \lesssim \liminf_{\eta\to 0} \min\left\{ \left|\left|\partial_i u_{i, \eta}\right|\right|^2_{L^2(\ball{0}{sr})}, \left|\left|\partial_i u_{i, \eta} -1 \right|\right|^2_{L^2(\ball{0}{sr})} \right\}
\end{align*}
to see
\begin{align*}
  \quad \min_{i=1,2,3; \nu \in N_i} \left\{\sum_{\tilde \nu \in N\setminus\{\nu\}} \dashint_{\ball{0}{\frac{sr}{2}}} A_{[\tilde \nu]}^{(0)} \intd x\right\} 
% &\lesssim \liminf_{\eta \to 0 } \min_{i=1,2,3; \bar \nu \in N_i} \left\{\frac{1}{r^3}\left|\left|\partial_i u_{i, \eta}\right|\right|^2_{L^2(\ball{0}{sr})}  + \frac{1}{r^3}\left|\left|\partial_{i+1} u_{i+1,\eta}- f_{\bar \nu, \eta}\right|\right|^2_{L^2(\ball{0}{sr})}\right\}.
  & \lesssim \lim_{\eta \to 0} \left(r^{-3 +\frac{2}{3}}E_{\eta}(\ball{0}{r})\right)^\frac{1}{4} + r^{-3 +\frac{2}{3}}E_{\eta}(\ball{0}{r})\\
  & \leq \left(r^{-3 +\frac{2}{3}}E\left(\overline{\ball{0}{r}}\right)\right)^\frac{1}{4} + r^{-3 +\frac{2}{3}}E\left(\overline{\ball{0}{r}}\right).
\end{align*}

\textit{Step 3: Prove $A_{[\nu]}^{(0)}A_{[\tilde \nu]}^{(0)} = 0$ for $\nu \neq \tilde \nu$.}\\
As a result of Step 2 we get for $\nu, \tilde \nu \in N$ with $\nu \neq \tilde \nu$ that
\[\dashint_{\ball{0}{\frac{sr}{2}}} A_{[\nu]}^{(0)}A_{[\tilde \nu]}^{(0)} \intd x \lesssim  \left(r^{-3 +\frac{2}{3}}E\left(\overline{\ball{0}{r}}\right)\right)^\frac{1}{4} + r^{-3 +\frac{2}{3}}E\left(\overline{\ball{0}{r}}\right).\]
Reversing the translation to $y=0$ we see that $y$ is a Lebesgue point of the non-negative function $A_{[\nu]}^{(0)}A_{[\tilde \nu]}^{(0)}$ with
\begin{align}
  A_{[\nu]}^{(0)}A_{[\tilde \nu]}^{(0)} = 0
\end{align}
as long as 
\[y \not\in S= \left\{y \in \Omega:\limsup_{r\to 0} r^{-3 + \frac{2}{3}} (E_{elast} + E_{inter})\left(\overline{\ball{y}{r}}\right) >0\right\}.\]
By standard covering arguments one can see that $\operatorname{dim}_H S\leq 3 - \frac{2}{3}$, which concludes the proof of the first part of the statement.

\textit{Step 4: We have $\theta_0 \in \{0,1\}$ for almost all Lebesgue points of $\theta_0$.}\\
The argument is very similar to Steps 1 and 3.
Instead of using the result of Capella and Otto in the form of estimate \eqref{kaese} we apply it as
\[\min \left\{\int_{\ball{0}{s}} |\chi_{0,\eta}| \intd \hat x,\int_{\ball{0}{s}} |\chi_{0,\eta} -1 | \intd \hat x \right\} \lesssim  r^{-3 + \frac{2}{3}}E_{\eta}(\ball{0}{r}).\]
Re-scaling the left-hand side to $\ball{y}{sr}$, taking the limit $\eta \to 0$ and using $\Leb^3(S)=0$ we get the desired statement.
Note that we can only get rid of the minimum in the localization $r\to 0$ if we a priori know $y$ to be a Lebesgue point of $\theta_0$. 
% % satisfy the properties \eqref{disjoint_supports}-\eqref{relation_A_chi}.
%Equations \eqref{support_sum_chi} and \eqref{relation_A_chi} finally follow from property \eqref{sum_A}.
%Thus any point not contained in this set of Hausdorff dimension at most $\frac{4}{3}$ is a Lebesgue point of $\omega_1$ or $\omega_2$ and the taken value is zero. It is elementary to see that $\lambda \in \{0,1\}$ for $\mathcal L^2$-a.e.\ $x$ in $\Omega$.
%The fact that $\lambda = \lambda\supdelta$ is an immediate consequence of $\lambda \in \{0,1\}$ and $\nu\supdelta \leq \nu$.
\end{proof}

\begin{proof}[Proof of Corollary \ref{cor:diff_incl}]
Let $\chi_{[\nu]}:= \chi_{\{A^{(0)\}}_{[\nu]} >0\}}$.
Equation \eqref{disjoint_supports}, namely $\chi_{[\nu]}\chi_{[\tilde\nu]}\equiv0$ for $\nu \neq \tilde \nu$, is an immediate consequence of equation \eqref{brot}.
To prove
\[A_{[\nu]}^{(0)} = 18 \theta_i(1-\theta_i)\chi_{[\nu]},\]
which is equation \eqref{relation_A_chi}, observe that 
\begin{align}\label{quark}
  \theta_j=0 \text{ for }j=1,2,3 \text{ almost everywhere on the set }\{\theta_0 = 1\} 
\end{align}
 due to $\sum_{i=0}^3 \theta_i \equiv 1$ and $0\leq \theta_i \leq  1$ for $i=0,\ldots,3$.
Therefore equation \eqref{sum_A_old} turns into
\[\sum_{\nu \in N_{i+1} \cup N_{i-1}} A_{[\nu]}^{(0)} = 18 \theta_i(1-\theta_i),\]
which implies \eqref{relation_A_chi} by equation \eqref{brot}.

This identity together with observation \eqref{quark} implies both that $\chi_{[\nu]} \equiv 0$ on $\{\theta_0 =1\}$ for all $\nu \in N$ and that for almost every $x\in \{0<\theta_i <1\}$ there exists some $\nu \in N_{i+1} \cup N_{i-1}$ such that $\chi_{[\nu]}(x)=1$.
Consequently, we have equation \eqref{support_sum_chi}, namely
\[\sum_{\nu \in N_{i+1} \cup N_{i-1}} \chi_{[\nu]}  \equiv \chi_{\{\theta_i \neq 0,1\}} \chi_{\{\theta_0 = 0\}}.\qedhere\]
%
%To prove the last part we assume that $\theta_0 \equiv 0$.
%Then for almost all $x\in \Omega$ the statement $\chi_{[\nu]}(x) =1$ for $\nu \in N_i$ and $i\in \{1,2,3\}$ implies $\chi_{[\tilde \nu]}(x) =0$ for $\tilde \nu \in N_{i+1} \cup N_{i-1}$ by our first insight of this proof.
%Therefore the above equation gives
%\[\chi_{\{\theta_i \neq 0,1\}} = \sum_{\tilde \nu \in N_{i+1} \cup N_{i-1}} \chi_{[\tilde \nu]} = 0.\qedhere\]
\end{proof}

\subsection[The transport property and accuracy of the approximation]{The transport property and accuracy of the approximation\sectionmark{The transport property}}
\sectionmark{The transport property}

\begin{proof}[Proof of Lemma \ref{lemma:mass_in_approx_converges}]
The existence of $\tau_i\supdelta$ such that equation \eqref{delta_weight} and the upper bound in estimate \eqref{bounds_tau} hold is a direct consequence of the inequality \eqref{sigma_delta_leq_sigma} and the identity \eqref{disjoint_supports}.

\textit{Step 1: Rewrite the difference $ 18 \theta_i(1-\theta_i) - \tau_i\supdelta$ in terms of the partitions $\chi_\eta$ to exploit the bound on the interfacial energy.}\\
Let $\psi \in C_c^\infty(\Omega)$ and compute
\begin{align*}
  \int_\Omega  \tau_i\supdelta |\psi|^2 \intd \Leb^3 
  & \overset{\eqref{delta_weight}}{=} \sigma\supdelta_i(\psi \otimes 1) 
  \overset{\eqref{representation_gradient_h_measure}}{=} 2\mu_{i}(E_i,E_i;|\psi|^2\otimes 1)\\
  & \overset{\text{Def.}}{\underset{\phantom{\eqref{delta_weight}}}{=}} \lim_{\eta \to 0} 2\int_\Omega |\psi|^2\left(\partial_i \left( u_{i,\eta}\supdelta - u_i \right) \right)^2\intd \Leb^3
%  & = \lim_{\eta \to 0}  \int_\Omega |\psi|^2 \, 18 \left( \chi_{i,\eta} \supdelta - \theta_i\right)^2 \intd \Leb^3 = \lim_{\eta \to 0} \int_\Omega |\psi|^2\, 18\left(\left(\chi_{i,\eta}\supdelta\right)^2 - \theta_i^2\right) \intd \Leb^3.
\end{align*}
An application of the relations \eqref{relation_Du_chi} and \eqref{relation_Du_theta} gives
\[\lim_{\eta \to 0} 2\int_\Omega |\psi|^2\left(\partial_i \left( u_{i,\eta}\supdelta - u_i \right) \right)^2\intd \Leb^3  = \lim_{\eta \to 0} 2 \int_\Omega |\psi|^2  \left( 3 \chi_{i,\eta} \supdelta - 3 \theta_i + \chi_{0,\eta} - \theta_0\right)^2 \intd \Leb^3.\]
Note that the difference $\chi_{0,\eta} - \theta_0$ does not contribute in the limit due to
\[\lim_{\eta \to 0} \int_\Omega |\chi_{0,\eta} - \theta_0|^2 \intd \Leb^3 = \lim_{\eta \to 0} \int_\Omega \chi_{0,\eta} - 2\chi_{0,\eta}\theta_0 + \theta_0^2 \intd \Leb^3 = \int_\Omega \theta_0(1-\theta_0) \intd \Leb^3 = 0,\]
where in the last step we used $\theta_0 \in \{0,1\}$ almost everywhere, see Proposition \eqref{prop:disjoint_supports}.
Consequently, we get
\[\int_\Omega  \tau_i\supdelta |\psi|^2 \intd \Leb^3  = \lim_{\eta \to 0} \int_\Omega |\psi|^2\, 18\left(\left(\chi_{i,\eta}\supdelta\right)^2 - \theta_i^2\right) \intd \Leb^3.\]

The result of this computation can be used to deduce
\begin{align*}
 \int_\Omega |\psi|^2 \left(18\theta_i(1-\theta_i) - \tau_i\supdelta\right) \intd \Leb^3 & = \lim_{\eta \to 0} \int_\Omega |\psi|^2\, 18\left(\theta_i - \left(\chi_{i,\eta}\supdelta\right)^2\right) \intd \Leb^3 \\
 & = \lim_{\eta \to 0} \int_\Omega |\psi|^2\, 18\left(\chi_{i,\eta} - \left(\chi_{i,\eta} \supdelta\right)^2\right) 
 \intd \Leb^3 \\
 & = \lim_{\eta \to 0} \int_\Omega |\psi|^2\, 18\left(\chi_{i,\eta}^2 - \left(\varphi_{\delta\eta^\frac{1}{3}} \ast \chi_{i,\eta}\right)^2\right) \intd \Leb^3.
\end{align*}

\textit{Step 2: We have \[\liminf_{\eta \to 0} \int_\Omega|\chi_\eta -\varphi_{\delta\eta^{\frac{1}{3}}}\ast\chi_\eta| |\psi|^2\intd \Leb^3 \leq \delta E_{inter}(|\psi|^2).\]}\\
This is a $BV$-version of the well-known estimate
\[|| f - \phi_\delta \ast f ||_{L^p} \lesssim \frac{1}{\delta}||Df||_{L^p}\]
for $p \geq 1$.
We provide the argument to ensure that it also holds in the localized version we require.

For each $\eta$ in the subsequence let $\chi_\eta\supn$ be a smooth approximation of $\chi_\eta$ such that
\begin{enumerate}
 \item $\chi_\eta\supn \to \chi_\eta$ in $L^1(\Omega)$,
 \item $|D\chi_\eta \supn| \overset{\ast}{\warr} |D\chi_\eta|$
\end{enumerate}
as $n \to \infty$.
The existence follows from the usual density statement for $BV$ functions \cite[Theorem 2 of Chapter 5.2]{evans2015measure}, as convergence of the total mass and lower semi-continuity of the $BV$ norm on open subsets implies weak convergence of the total variation measures.
We estimate
\begin{align*}
& \quad \int_\Omega|\chi_\eta\supn -\varphi_{\delta\eta^{\frac{1}{3}}}\ast\chi_\eta\supn| |\psi|^2\intd \Leb^3\\
 & =  \int_\Omega\left|\int_{\ball{0}{\delta\eta^\frac{1}{3}}}\varphi_{\delta\eta^\frac{1}{3}}(y)\left(\chi_\eta\supn(x-y)-\chi_\eta\supn(x)\right)\intd y\right| |\psi|^2(x)\intd x\\
 & \leq \int_\Omega\int_{\ball{0}{\delta\eta^\frac{1}{3}}}\int_0^1\delta\eta^\frac{1}{3}\varphi_{\delta\eta^\frac{1}{3}}(y)|D\chi_\eta\supn|(x-ty)|\psi|^2(x)\intd t \intd y \intd x\\
 & =  \int_0^1\int_{\ball{0}{\delta\eta^\frac{1}{3}}} \int_\Omega \delta\eta^\frac{1}{3}\varphi_{\delta\eta^\frac{1}{3}}(y)|D\chi_\eta\supn|(x) |\psi|^2(x+ty)\intd x \intd y \intd t,
\end{align*}
where in the last step we used $\operatorname{supp} \psi \subset \subset \Omega$ and $\eta>0$ small enough when we shifted the domain of integration.
Letting $n$ go to infinity we obtain the estimate
\begin{equation*}
 \int_\Omega|\chi_\eta -\varphi_{\delta\eta^{\frac{1}{3}}}\ast\chi_\eta| |\psi|^2\intd \Leb^3 \leq \int_0^1\int_{\ball{0}{\delta\eta^\frac{1}{3}}} \int_\Omega \delta\eta^\frac{1}{3}\varphi_{\delta\eta^\frac{1}{3}}(y) |\psi|^2(x+ty)\intd |D\chi_\eta|(x) \intd y \intd t.
\end{equation*}
As a result of the convergence $|\psi|^2(x+ty) \to |\psi|^2(x)$ being uniform in $x$ and the measures $\eta^\frac{1}{3}|D\chi_\eta|$ having uniformly bounded mass we get
\begin{align}
 & \quad \liminf_{\eta \to 0} \int_\Omega|\chi_\eta -\varphi_{\delta\eta^{\frac{1}{3}}}\ast\chi_\eta| |\psi|^2\intd \Leb^3 \notag\\
 & \leq \lim_{\eta \to 0} \int_0^1\int_{\ball{0}{\delta\eta^\frac{1}{3}}} \int_\Omega \delta\eta^\frac{1}{3}\varphi_{\delta\eta^\frac{1}{3}}(y) |\psi|^2(x+ty)\intd |D\chi_\eta|(x) \intd y \intd t \notag\\
 & = \lim_{\eta \to 0} \int_\Omega \delta\eta^\frac{1}{3} |\psi|^2(x)\intd |D\chi_\eta|(x)\notag\\
 & = \delta E_{inter}(|\psi|^2). \notag
%  \label{convolutionconvergence2}
\end{align}

\textit{Step 3: Conclusion.}\\
Combining the results of Steps 1 and 2 we get
\[\int_\Omega |\psi|^2 \left(18\theta_i(1-\theta_1) - \tau_i\supdelta\right) \intd \Leb^3 \leq 36\, \delta E_{inter}(|\psi|^2).\]
Using $|\psi|^2$ we can approximate characteristic functions of balls $\overline{\ball{0}{r}} \subset \Omega$ to obtain
\[\int_{\overline{\ball{0}{r}}} \left(18\,\theta_i(1-\theta_1) - \tau_i\supdelta\right) \intd \Leb^3 \leq 36\, \delta E_{inter}(\overline{\ball{0}{r}}).\]
A differentiation theorem for Radon measures, see e.g.\ \cite[Theorem 1, Chapter 1.6]{evans2015measure}, implies
\[18\theta_i(1-\theta_1) - \tau_i\supdelta \leq 36\, \delta \frac{D  E_{inter}}{D\Leb^3}.\qedhere\]
%Approximating characteristic functions of balls 
%Because the measure on the left-hand side is absolutely continuous with respect to $\Leb^2$ this can be slightly improved to read
%\[\mu(e_1,e_1;\psi\otimes 1) - \mu\supdelta(e_1,e_1;\psi\otimes 1) \leq 4 \delta \int_\Omega \psi \frac{DE_{inter}}{D\Leb^2}\intd \Leb^2.\]
%Using $||a||_\infty - a$ instead of $a$ (and $\psi$ instead of $\psi^2$) in inequality \eqref{2D:monotonicity_nu_in_delta} we finally see
%\begin{align*}
% \nu(\psi \otimes a) - \nu\supdelta(\psi \otimes a) & \leq ||a||_\infty\left(\nu(\psi \otimes 1) - \nu\supdelta(\psi \otimes 1)\right)\\
%  & = ||a||_\infty\left(\nu(\psi \otimes 2\xi_1^2) - \nu\supdelta(\psi \otimes 2\xi_1^2)\right)\\
%  & = 2 ||a||_\infty\left(\mu(e_1,e_1;\psi \otimes 1) - \mu\supdelta(e_1,e_1;\psi \otimes 1)\right)\\
%  &\leq 8 ||a||_\infty \delta \int_\Omega \psi \frac{DE_{inter}}{D\Leb^2}\intd \Leb^2.\qedhere
%\end{align*}
%
%%The same calculation as in the 2D case can now be used to prove
%\[\int_\Omega \psi \left(\theta_i(1-\theta_1) - A_i\supdelta\right) \intd \Leb^3 \leq C\delta E_{inter}(\psi)\]
%for some $C>0$.
%As before this localizes to
%\[ \theta_i(1-\theta_i) - A_i\supdelta \leq C\delta \frac{DE_{inter}}{D\Leb^3}.\qedhere\]
\end{proof}

\begin{proof}[Proof of Proposition \ref{prop:transport_property}]
\textit{Step 1: Set up the notation and post-process Lemma \ref{lemma:H-measure_wave_equations}.}\\
Let $\delta>0$.
Let $i \in \{1,2,3\}$; $v,w \in \{[111],[\overline111],[1\overline11],[11\overline1]\}$ and $h_\eta\supdelta \in L^2(\Omega;\R^3)$ for each $\eta>0$ be such that
\begin{align}\label{transport_wave_equation}
	\partial_v\partial_w u_{i,\eta}\supdelta = \Div \varphi_{\delta \eta^\frac{1}{3}} \ast h_\eta^{(0)}
\end{align}
is one of the equations in Lemma \ref{lemma:H-measure_wave_equations}.
We will use the abbreviation 
\begin{align}
 U_{\eta}:=\left(u_{i,\eta}\supdelta-\varphi_{\delta\eta^{\frac{1}{3}}}\ast u_i\right)= \varphi_{\delta\eta^{\frac{1}{3}}}\ast\left(u_{i,\eta}- u_i\right).
%  \label{abbrev}
\end{align}
Because for all $\tilde v, \tilde w \in \R^3$ we have $\partial_{\tilde v}\varphi_{\delta\eta^{\frac{1}{3}}} \ast u_i \to \partial _{\tilde v} u_i$ strongly in $L^2$, the sequences $\partial_{\tilde v} U_\eta$, $\partial_{\tilde w} U_\eta$ of smooth functions still generate the H-measures $\mu_i\supdelta(\tilde v,\tilde w;\bullet \otimes \bullet)$.
Additionally, we drop the superscript of $h_\eta$.
The wave equation given above then reads
\[\partial_v\partial_w U_\eta = \Div \varphi_{\delta \eta^\frac{1}{3}} \ast h_\eta,\]
% \numberthis \label{transport_wave_equation}\]
which gives
\begin{align*}
	\int_\Omega |\psi |^2 |\partial_v\partial_w U_\eta|^2\intd \Leb^3 \leq \int_\Omega |\psi |^2\left|D\varphi_{\delta \eta^{\frac{1}{3}}}\ast h_{\eta}\right|^2\intd \Leb^3.
\end{align*}
Here the convolution on the right-hand side is understood to be componentwise.
As $\psi$ is uniformly continuous and $\varphi_{\delta\eta^\frac{1}{3}}$ concentrates in the limit $\eta \to 0$, we get
\begin{align*}
	\limsup_{\eta \to 0} \int_\Omega |\psi |^2 |\partial_v\partial_w U_\eta|^2\intd \Leb^3 \leq \limsup_{\eta \to 0} \int_\Omega \left|D \varphi_{\delta \eta^{\frac{1}{3}}}\ast (\psi h_{\eta})\right|^2\intd \Leb^3.
\end{align*}
Young's inequality, the scaling properties of $\left|\left|D\varphi_{\delta \eta^\frac{1}{3}}\right|\right|_{L^1}$ and the bound \eqref{3D:error_constraint} for $h_\eta$ imply
\begin{align}\label{transport_localized_estimate}
	\begin{split}
		\limsup_{\eta \to 0} \int_\Omega |\psi |^2 |\partial_v\partial_w U_\eta|^2\intd \Leb^3 & \lesssim \limsup_{\eta \to 0} \left|\left|D\varphi_{\delta \eta^{\frac{1}{3}}}\right|\right|^2_{L^1}\int_\Omega \psi^2 |h_{\eta}|^2 \intd x \\
   		& \leq  \frac{||D\varphi||_{L^1}^2}{\delta^2} E_{elast}(\psi^2). 
   	\end{split}
\end{align}

\textit{Step 2: Rewrite the distributional derivatives of $\mu_i\supdelta(v,v; \bullet \otimes \bullet)$ using the differential constraint \eqref{transport_wave_equation}.}\\
Let $\psi \in C_c^\infty(\Omega)$ and $a\in C(\Sph^1;[0,1])$.
As the derivatives of $U_\eta$ still generate the H-measures $\mu_i\supdelta$ we get
\begin{align*}
 \mu_i\supdelta(v,v;\partial_w |\psi|^2 \otimes a) & = \lim_{\eta \to 0} 2\RE \int a \F(\psi \partial_v U_\eta)\F^*(\partial_w \psi \partial_v U_\eta) \intd \Leb^3\\
 & = \lim_{\eta \to 0} 2\RE \int a \F(\psi \partial_v U_\eta)\F^*(\partial_w (\psi \partial_v U_\eta)) \intd \Leb^3 \\
 & \quad \quad -  2\RE \int a \F(\psi \partial_v U_\eta)\F^*(\psi \partial_w \partial_v U_\eta) \intd \Leb^3.
\end{align*}
The first term vanishes since
\[ \int a \F(\psi \partial_v U_\eta)\F^*(\partial_w (\psi \partial_v U_\eta)) \intd \Leb^3 = 2\pi i \int \xi\cdot w a |\F(\psi \partial_v U_\eta)|^2 \intd \Leb^3(\xi)\]
is purely imaginary.
Consequently, applying the Cauchy-Schwarz inequality to the second term and using inequality \eqref{transport_localized_estimate} to estimate the second derivatives we see that
\begin{align*}
	\left| \mu_i\supdelta(v,v;\partial_w |\psi|^2 \otimes a)\right|\lesssim \frac{1}{\delta} \left(\mu_i\supdelta(v,v;|\psi|^2 \otimes a) \right)^\frac{1}{2} \left(E_{elast}(\psi^2)\right)^\frac{1}{2}.
\end{align*}
%
%
%where in the last step we used \begin{align}\label{PIpseudodiff}
% \int m \F(Dv)\F^*(w) + m\F(v)\F^*(Dw) \intd \xi 
% & = \int i \xi m\F(v)\F^*(w) - i \xi m\F(v)\F^*(w) \intd \xi = 0
%\end{align}
%for $v,w \in W^{1,2}(\R^2;\R)$. Noticing that the right-hand side of the previous calculation is a sum of H-measures with support in $\{\xi_1^2 =\xi_2^2 = \frac{1}{2}\}$ we get
%\begin{align*}
% \mu(a,a;\partial_b \psi^2 \otimes m) & = -2 \mu(\partial_a v,\Div g_\eta; \psi^2 \otimes m) = - 4 \mu(\partial_a v,\Div g_\eta; \psi^2  \otimes \xi_1^2 m).
%\end{align*}
%Since the fact that second derivatives commute give us a differential contraint for the first derivatives, we can apply the localization principle once again to deduce
%\begin{align*}
% \mu(a,a;\partial_b \psi^2 \otimes m) = - 4 \mu\left(\partial_1 v,\Div g_\eta; \psi^2  \otimes \xi_1(\xi\cdot a) m\right).
%\end{align*}
%We now use the Cauchy-Schwarz inequality again to find the estimate
%\begin{align*}
% |\mu(a,a;\partial_b \psi^2 \otimes m)| & \leq 4 ||m||_\infty \lim_{\eta \to 0} \left|\left|\psi \partial_1v_\eta\right|\right|_{L^2} \limsup_{\eta \to 0} ||\psi \Div g_\eta||_{L^2} \\ 
%  & \leq 4 \sqrt{2} \frac{||D\varphi||_{L^1}}{\delta} 
%  \left(\int_\Omega \psi ^2 \left(1-\left(\partial_1u_1\right)^2\right)\intd \Leb^2\right)^\frac{1}{2} E_{elast,\eta}(\psi^2)^\frac{1}{2}. 
%\end{align*}

\textit{Step 3: Rewrite the result in terms of $\tau_i\supdelta \chi_{[\nu]}$.}\\
In terms of the measure $\sigma_i$ the last estimate reads
\begin{align*}
	 \left|\sigma_i\supdelta(\partial_w |\psi|^2 \otimes (\xi \cdot v )^2a) \right|\lesssim \frac{1}{\delta} \left(\sigma_i\supdelta(|\psi|^2 \otimes (\xi \cdot v )^2 a) \right)^\frac{1}{2} \left(E_{elast}(\psi^2)\right)^\frac{1}{2}.
\end{align*}
Using $a$ to localize around $\pm \nu$ for $\nu\in N_{i+1} \cup N_{i-1}$ with $\nu \cdot v \neq 0$ we get that
\begin{align*}
	\left|\int_\Omega \tau_i\supdelta \chi_{[\nu]} \partial_w |\psi|^2 \intd x \right| \lesssim \frac{1}{\delta} \left(\int_\Omega \tau_i\supdelta \chi_{[\nu]} |\psi|^2 \intd x\right)^\frac{1}{2} \left(E_{elast}(\psi^2)\right)^\frac{1}{2}.
\end{align*}

A straightforward crawl through the combinatorics in Lemma \ref{lemma:H-measure_wave_equations} reveals that for each $\nu\in N_{i+1} \cup N_{i-1}$ we have either $\nu \cdot v \neq 0$, $\nu\cdot w = 0$ or $\nu \cdot v = 0$, $\nu\cdot w \neq 0$.
Thus we see that each equation in Lemma \ref{lemma:H-measure_wave_equations} pertaining to $u_i$ for $i\in\{1,2,3\}$ allows us to estimate the weak derivative $\partial_w (\tau_i\supdelta \chi_{[\nu]})$ for one vector $w \in \{[111],[\overline111],[1\overline11],[11\overline1]\}$ with $w\cdot \nu =0$.
Furthermore, each of the two equations gives us an estimate for two linearly independent directions.
Consequently we can estimate $\partial_d (\tau_i\supdelta \chi_{[\nu]})$ for all directions lying in the two-dimensional subspace $\{\tilde d\cdot \nu = 0\}$ to get
\begin{align*}
	\left|\int_\Omega \tau_i\supdelta \chi_{[\nu]} \partial_d |\psi|^2 \intd x \right| \lesssim \frac{1}{\delta} \left(\int_\Omega \tau_i\supdelta \chi_{[\nu]} |\psi|^2 \intd x\right)^\frac{1}{2} \left(E_{elast}(\psi^2)\right)^\frac{1}{2}
\end{align*}
for $d\in \Sph^2$ with $d\cdot \nu=0$.

\textit{Step 4: Localize the estimate.}\\
As the right-hand side can be estimated by $||\psi^2||_\infty$ we see that $\partial_d \tau_i\supdelta \chi_{[\nu]}$ defines a finite Radon measure on $\Omega$.
Given any Borel set $B\subset \Omega$ we use $\psi^2$ to approximate its characteristic function and the value of all involved measures on it, leading to
\[\left| \partial_d \tau_i\supdelta \chi_{[\nu]} (B) \right| \leq \frac{C}{\delta} 
  \left(\int_B  \tau_i\supdelta \chi_{[\nu]}\intd x\right)^\frac{1}{2} E_{elast,\eta}(B)^\frac{1}{2}\]
  for some universal constant $C>0$.
 As the right-hand side vanishes for $\Leb^3$ null sets, we see that the derivatives are absolutely continuous with respect to $\Leb^3$.
 We then get the estimate \eqref{3D_def_meas_mixed} in all Lebesgue points.
\end{proof}

\begin{proof}[Proof of Lemma \ref{lemma:Besov_incomplete}]
	For any function $g: U \to \R$ let $g_h(x) := g(x + hd)$.
	We will use the abbreviations $\tau := 18 \theta_i(1-\theta_i)$, $\tau\supdelta := \tau_i\supdelta$ and $\chi:= \chi_{[\nu]}$, and remind the reader of the assumption $\tau > \eps$ almost everywhere on the set $\{\tau > 0\}$.
	Therefore, equation \eqref{support_sum_chi} in Proposition \ref{prop:disjoint_supports} implies $\tau(x) >\eps$ for almost all $x \in \Omega$ with $\chi(x) = 1$.
	Consequently, going through the cases $\chi_h(x) - \chi(x) \in \{-1,0,1\}$ we see for all $\delta >0 $ that
	\begin{align}\label{besov_partition}
	 \begin{split}
	  \int_U | \chi_h - \chi |  \intd x &  \leq \frac{1}{\eps} \int_U | (\tau\chi)_h - \tau\chi |\left((1- \chi) + (1-  \chi_h)\right)  \intd x \\
	  & \lesssim \frac{1}{\eps}\int_U | ((\tau - \tau\supdelta)\chi)_h|  + |(\tau\supdelta - \tau)\chi | \intd x \\
	  & \quad +  \frac{1}{\eps} \int_U  |(\tau\supdelta\chi)_h - \tau\supdelta\chi|  \left((1- \chi) + (1-  \chi_h)\right)  \intd x .
	 \end{split}
	\end{align}
	Applying the transport property, Proposition \ref{prop:transport_property}, and Lemma \ref{lemma:differential_inequality} to the third term we obtain
	\begin{align*}
	  & \quad \int_U  |(\tau\supdelta\chi)_h - \tau\supdelta\chi|  \left((1- \chi) + (1-  \chi_h)\right)  \intd x\\
	  & = \int_U  |(\tau\supdelta\chi)_h|  (1- \chi)  + | \tau\supdelta\chi| (1-  \chi_h)  \intd x\\
	  & \lesssim  \frac{h^2}{\delta^2} \int_U\dashint_0^h E_{elast}^\Leb(x+td) \intd t + \dashint_0^h E_{elast}^\Leb(x+h-td) \intd t \intd x.
	\end{align*}
	To get rid of the inner integrals we use Young's inequality for convolutions on the lines $t\mapsto x+td$.
	Additionally, we plug Lemma \ref{lemma:mass_in_approx_converges} into the first two terms on the right-hand side of the estimate \eqref{besov_partition} we get
	\begin{align*}
		\int_U | \chi_h - \chi | \intd x 
%		& \lesssim \frac{1}{\eps}\left(\delta E_{inter}^\Leb(U_h) + h^2\int_{U_h} |\partial_d \tau\chi|^2 \intd x\right) \\
	   \lesssim\frac{1}{\eps}\left(\delta E_{inter}^\Leb(U_h) + \frac{h^2}{\delta^2}E_{elast}^\Leb(U_h)\right) .
	\end{align*}
	Choosing $\delta := h^\frac{2}{3}\left(\frac{E_{elast}}{E_{inter}}\right)^\frac{1}{3}$ if $E_{inter}^\Leb(U_h), E_{elast}^\Leb(U_h) > 0$ and $\delta \to \infty$ or $\delta \to 0$ otherwise we see that
	\[\int_U | \chi_h - \chi | \intd x \lesssim \frac{1}{\eps}\left(E_{inter}^\Leb(U_h)\right)^\frac{2}{3}\left(E_{elast}^\Leb(U_h)\right)^\frac{1}{3} h^\frac{2}{3}.\qedhere\]
\end{proof}

\begin{proof}[Proof of Lemma \ref{lemma:differential_inequality}]
 For $t \in (0,1]$ we only have to deal with the case that $f(t) > 0$.
 Without loss of generality, we may assume that $ 0 = \inf \{ s\in (0,t): f(s)>0\}$.
 In that case we know $f^\frac{1}{2} \in W^{1,1}(0,t)$ with the pointwise a.e.\ estimate
 \[\left(f^\frac{1}{2}\right)'\leq g^\frac{1}{2}.\]
 The fundamental theorem of calculus for Sobolev functions implies that
 \[f^\frac{1}{2}(t) = f^\frac{1}{2}(t) - f^\frac{1}{2}(0) \leq \int_0^t g^\frac{1}{2}(s) \intd s.\]
 Squaring the inequality and applying Jensen's inequality to the right-hand side we get the desired statement
 \[f(t) \leq t^2 \dashint_0^t g(s) \intd s.\qedhere\]
\end{proof}

\subsection{Proofs of the main results}

\begin{proof}[Proof of Theorem \ref{thm:Besov_for_non-checkerboards}]
%	We will only prove the estimate for $\chi_{[0:1:1]} \in B^{2/3}_{1,\infty}$ as the proofs for the other statements work similarly.
	Let $\nu \in N_i$ for some $i=1,2,3$.
	We only have to prove the estimate for $\partial^h_\nu \chi_{[\nu]}$ since the difference quotients in directions $d$ with $d\cdot \nu =0$ are controlled by Lemma \ref{lemma:Besov_incomplete}.
	
	To this end let
	\[\mathcal{D} = \{[111],[\overline111],[1\overline11],[11\overline1]\}.\]
	Straightforward combinatorics imply that we have $d\cdot \nu \neq 0$ for exactly two directions $d_1,d_2 \in \mathcal{D}$ and that $d_1$ and $d_2$ uniquely determine $\nu \in N$.	
	Additionally, setting $\pi_i: \R^3 \to \R^2$ to be the projection dropping the $i$-th entry of a vector it can be seen that $\pi_i d_1= \pm \pi_i d_2 = \pm \pi_i \nu$.
	Possibly replacing $d_1$ by $-d_1$ or $d_2$ by $-d_2$ we may suppose that
	\begin{align}\label{label}
		\pi_i d_1=  \pi_i d_2 = \pi_i \nu.
	\end{align}
%	Furthermore, for a function $f:U \to \R$, $d \in \R^3$ and $h>0$ such that $h|d| \leq \dist (U,\partial \Omega)$ we write $\partial_d f \approx 0$ as an abbreviation for the statement
%	\[\sup_{h>0, h|d| \leq \dist (U,\partial \Omega)} h^{-\frac{2}{3}}\int_U | \partial_d^h f| \intd x < \infty,\]
%	The notation $\partial_d^h f$ was defined in Definition \ref{def:Besov}.
	
	By assumption there has to be some oscillation everywhere, i.e., we have
	\[\sum_{\nu \in N} \chi_{[\nu]} \equiv 1.\]
	Using that $\nu$ is uniquely determined by the property $d_1\cdot \nu \neq 0$ and $d_2 \cdot \nu \neq 0$ and applying Lemma \ref{lemma:Besov_incomplete} for all other normals this implies
	\[\int_U |\partial_{d_1}^h\partial_{d_2}^h\chi_{[\nu]} | \intd x \lesssim \frac{1}{\eps}\left(E_{inter}^\Leb(U_{ch})\right)^\frac{2}{3}\left(E_{elast}^\Leb(U_{ch})\right)^\frac{1}{3} h^\frac{2}{3}\]
	for $h> 0$ such that $h < \frac{1}{c} \dist(U,\partial\Omega)$.
	As $\partial_{E_i} \chi_{[\nu]}$ is controlled by Lemma \ref{lemma:Besov_incomplete} as well, we get using the normalizations \eqref{label} that
	\[\int_U |\partial_{\nu}^h\partial_{\nu}^h\chi_{\nu} | \intd x \lesssim \frac{1}{\eps}\left(E_{inter}^\Leb(U_{ch})\right)^\frac{2}{3}\left(E_{elast}^\Leb(U_{ch})\right)^\frac{1}{3} h^\frac{2}{3}.\]
	
	Thus to conclude we merely have to ensure that $|\partial_{d}^h\chi | \leq |\partial_{d}^h\partial_{d}^h\chi |$ for any measurable characteristic function $\chi$ and $d \in \R^3$.
	For almost all $x \in U$ we have $\partial_d^h \chi (x)\in \{-1,0,1\}$ and
	\[\partial_d^h \partial_{d}^h \chi(x) = \partial_d^h\chi(x+hd) - \partial_d^h\chi(x) = \chi(x+2hd) - 2\chi(x+hd) + \chi(x) \in \{-2,-1,0,1,2\}.\]
	In particular, we only have to ensure that $\partial_d^h \partial_{d}^h \chi(x) = 0$ implies $\partial_d^h \chi(x) = 0$.
	Indeed, if we have $\partial_d^h \partial_{d}^h \chi(x) = 0$ then straightforward combinatorics give
	\[\chi(x) = \chi(x+hd) = \chi(x + 2 hd).\qedhere\]
\end{proof}

\begin{proof}[Proof of Theorem \ref{thm:Besov_checkerboard}]
	By relabeling we may suppose $i=1$.
	Furthermore, we abbreviate
	\[E_h := \left(E_{inter}^\Leb(U_{ch})\right)^\frac{2}{3}\left(E_{elast}^\Leb(U_{ch})\right)^\frac{1}{3}\]
	for $h < \frac{1}{c} \dist(U,\partial \Omega)$ and $c\geq 1$ as in Theorem \ref{thm:Besov_for_non-checkerboards}.
	
	For $\nu \in N_1\cup N_2\setminus\{\nu_2\}$ there exist distinct $d_1, d_2 \in \mathcal{D}$ such that $d_1\cdot \nu_2=0$ and $d_1\cdot \nu, d_2 \cdot \nu \neq 0$, since we saw in the previous proof that the directions $d_1$ and $d_2$ uniquely determine $\nu$.
	Recall that by equation \eqref{support_sum_chi} there has to be some oscillation on the set 
	\[\{\theta_3>0\} = \{\theta_3=a\}.\]
	More specifically, we have $\sum_{\nu \in N_1 \cup N_2} \chi_{[\nu]} = \chi_{\{\theta_3 = a\}}$, which together with the fact that $\theta_3$ only depends on $x\cdot \nu_2$ implies
	\[\partial_{d_1} \sum_{\tilde \nu \in N_1 \cup N_2} \chi_{[\tilde \nu]} = 0.\]
	Taking a difference quotient in direction $d_2$ gives 
	\[\partial_{d_1} \partial_{d_2} \chi_{[\nu]} = 0.\]
	Consequently, we can use the same arguments as in the proof of Theorem \ref{thm:Besov_for_non-checkerboards} to see that
	\[\int_U |\partial_\nu^h \chi_{[\nu]}(x) |\intd x \leq  \frac{C}{\min(a,b)}E_h h^\frac{2}{3}\]
	for all $\nu \in N_1 \cup N_2 \setminus \{\nu_2\}$, $d\in \Sph^2$ and $h< \frac{1}{c}\dist(U,\partial\Omega)$.
	The same argument repeated for the set $\chi_{\{\chi_2 = b\}}$ tells us that
	\[\int_U |\partial_d^h \chi_{[\nu]}(x) |\intd x \leq  \frac{C}{\min(a,b)}E_h h^\frac{2}{3}\]
	for all $\nu \in N_1 \cup N_3 \setminus \{\nu_3\}$, $d\in \Sph^2$ and $h< c\dist(U,\partial\Omega)$.
	
	As $\operatorname{span}\left( \{d \in \R^3: d\cdot  \nu_3 = 0\} \cup \{\nu_2\}\right) = \R^3$ by $\nu_2 \cdot \nu_3 \neq 0$ due to definition \ref{def:normals}, $\nu_2\in N_2$ and $\nu_3 \in N_3$ we only have to prove
	\[\int_U |\partial_{\nu_2}^h \chi_{[\nu_3]}(x) |\intd x \leq  \frac{C}{\min(a,b)}E_h h^\frac{2}{3}\]
	in order to get the Besov-estimate in all directions $d \in \Sph^2$. 
	To this end, note that equation \eqref{support_sum_chi} for $i=2$ and $i=3$ together with the fact that there can locally only be a single direction of oscillation, see \eqref{disjoint_supports}, implies
	\[\sum_{\nu\in N_1} \chi_{[\nu]} = \chi_{\{\theta_1= 0,\, \theta_2 = b,\, \theta_3 = a\}} = \chi_{\tilde B }\chi_{\tilde A }\numberthis \label{decomposition_nu_1}\]
	for $\tilde A := \pi_{\nu_2}^{-1}(A)$ and $\tilde B := \pi_{\nu_3}^{-1}(B)$.
	Therefore, the proof so far gives the full Besov-estimate
	\[\int_U |\partial_d^h \chi_{\tilde B}\chi_{\tilde A } |\intd x \leq  \frac{C}{\min(a,b)}E_h h^\frac{2}{3}\]
	for all $d \in \Sph^2$ for the right-hand side.
	As $\chi_{\{\theta_2 = b\}}$ is independent of $\nu_2$ we obtain
	\[\int_U |\partial_{\nu_2}^h \chi_{\tilde B }(1-\chi_{\tilde A }) |\intd x \leq  \frac{C}{\min(a,b)}E_h h^\frac{2}{3}.\]
	Using the fact that we already proved the full estimate for $\chi_{[ \nu]}$ with $ \nu \in N_3 \setminus \{\nu_3\}$ and exploiting the equality
	\[\sum_{\nu\in N_3} \chi_{[\nu]} = \chi_{ \{\theta_1= 1-b,\, \theta_2 = b,\, \theta_3 = 0\}}=\chi_{\tilde B }(1-\chi_{\tilde A })\numberthis \label{decomposition_nu_2}\]
	we get the desired estimate for $\partial^h_{\nu_2} \chi_{[\nu_3]}$.
	
	The proof of the full estimate for $\chi_{[\nu_2]}$ works similarly.
	Thus we proved the Besov estimate for $\chi_{[\nu]}$ for all $\nu \in N$.
	
	Finally, we remark that the identity \eqref{decomposition_nu_1} also ensures that $\chi_{\{\theta_1= 0,\, \theta_2 = b,\, \theta_3 = a\}}$ satisfies the Besov estimate, while the estimate for $\chi_{\{\theta_1= 1-b,\, \theta_2 = b,\, \theta_3 = 0\}}$ is implied by $\eqref{decomposition_nu_2}$.
	Estimating the function $\chi_{\{\theta_1= 1-a,\, \theta_2 = 0,\, \theta_3 = a\}}$ works again similarly and to ensure that $\chi_{\{\theta_1= 1,\, \theta_2 = 0,\, \theta_3 = 0\}}$ is well-behaved we use
	\[\chi_{\{\theta_1= 1,\, \theta_2 = 0,\, \theta_3 = 0\}} \equiv 1 - \chi_{\{\theta_1= 0,\, \theta_2 = b,\, \theta_3 = a\}} - \chi_{ \{\theta_1= 1-b,\, \theta_2 = b,\, \theta_3 = 0\}} - \chi_{\{\theta_1= 1-a,\, \theta_2 = 0,\, \theta_3 = a\}}.\qedhere\] 
\end{proof}

\begin{proof}[Proof of Lemma \ref{energy_density_habit_plane}]
	For $d$, $h$ and almost all $x' \in \R^3$ as in the statement of the lemma and $\delta>0$ we can apply Proposition \ref{prop:transport_property} and Lemma \ref{lemma:differential_inequality} to get the upper bound
	\[\tau_2\supdelta (x'  + hd) = \tau_2\supdelta\chi_{[\nu_3]}(x' + hd)\lesssim \frac{h^2}{\delta^2}\dashint_0^h E_{elast}^\Leb(x' + s d) \intd s\]
	since $\chi_{[\nu_3]}(-\eps d) = 0$ for all $0<\eps <1$  by assumption.
	Lemma \ref{lemma:mass_in_approx_converges} gives us a corresponding lower bound
	\[18 \frac{2}{3}\left(1-\frac{2}{3}\right) - 36 \delta E_{inter}^\Leb(x' + hd) \leq \tau_2\supdelta (x' + hd).\]
	Combining both we see
	\[1 \lesssim \delta E_{inter}^\Leb(x' + hd) + \frac{h^2}{\delta^2}\dashint_0^h E_{elast}^\Leb(x' + s d) \intd s.\]
	Choosing
	\[\delta = h^\frac{2}{3}\left( \frac{\dashint_0^h E_{elast}^\Leb(x' + s d) \intd s }{E_{inter}^\Leb(x' + hd)}\right)^\frac{1}{3}\]
	 gives the statement
	\[ \left(E_{inter}^\Leb\right)^\frac{1}{3}(x' + h d) \left(\dashint_0^h E_{elast}^\Leb (x' + sd) \intd s \right) ^\frac{2}{3} \geq  C h^{-\frac{2}{3}}\]
	for a universal constant $C>0$.
\end{proof}

\subsection*{Acknowledgment}
The author thanks his PhD adviser Prof.\ Dr.\ Felix Otto for the many helpful discussions.
 
 \emergencystretch=1em
 
 \printbibliography
\end{document}